\newtheorem{theorem}{Theorem}[section]
\newtheorem{lemma}[theorem]{Lemma}
\theoremstyle{definition}
\newtheorem{definition}[theorem]{Definition}
\newtheorem{proposition}[theorem]{Proposition}
\newtheorem{corollary}[theorem]{Corollary}
\theoremstyle{remark}
\title{Leaky Forcing: A New Variation of Zero Forcing}
\author{Shannon Dillman and Franklin Kenter}\thanks{Department of Mathematics, United States Naval Academy, 
Annapolis, MD 21402. Corresponding author: kenter@usna.edu}
\def\BState{\State\hskip-\ALG@thistlm}
\begin{document}
\maketitle
\begin{abstract}
Zero forcing is a one-player game played on a graph. The player chooses some set of vertices to color, then iteratively applies a color change rule: If all but one of a colored vertex's neighbors are colored, color (i.e.â ``force'' ) the remaining uncolored
neighbor. Generally, the goal is to find the minimum number of vertices to initially color such that all vertices eventually become colored. Recently, equivalent formations of zero forcing have been developed in different settings including sensor allocation to solve linear systems (K.-Lin 2018), controllability in follower-leader dynamics (Monshizadeh-Zhang-Camlibel 2014), and edge covering in specific hypergraphs (Brimkov-Fast-Hicks 2016).

While many variations of zero forcing are motivated by an associated minimum rank problem, these new formulations give new inspiration for new meaningful zero forcing variants.
In our case, we study a new variation based on the linear algebraic interpretation mentioned above. In particular, what if there is a juncture in a network that has a leak, and, hence, is unreliable to facilitate solving a linear system on the network?
In the context of zero forcing this corresponds to the following variation we call $\ell$-forcing: Given $\ell$, find a set of vertices such that for {\it any} set of $\ell$ vertices
that are unable to force, all vertices will still be colored.
We compute the $\ell$-forcing number for selected families of graphs including grid graphs. Perhaps surprisingly, we find examples where additional edges make the graph more ``resilient'' to these leaks. Further, we also implement known computational methods for our new leaky forcing variation. 
\end{abstract}





\section{Introduction}

Zero forcing is a game played on a graph, $G$. The game was originally developed to bound the maximum nullity rank (or equivalently, the minimum rank) over the set of matrices with a corresponding zero-nonzero pattern, $\mathcal{S}(G)$ \cite{zf_aim}.
In the zero forcing game, some vertices start as colored and others uncolored, and colored vertices ``force'' uncolored vertices to become colored under the color-change rule: If all but one of a colored vertex's neighbors are colored, the remaining uncolored neighbor becomes colored. A set of vertices is a {\it zero forcing set} if, when initially colored, all vertices in $G$ will eventually become colored after iteratively applying the color-change rule. The general goal is to find the minimum zero forcing set.

Perhaps surprisingly, zero forcing and its variations have a habit of being rediscovered in different fields and applications. These include power systems and sensor allocation \cite{pmu,zf.powerdom1, zf.powerdom2}, control of quantum systems \cite{zf_qc}, and controllability of leader-follower systems \cite{zf_control}.

Despite all of these applications, most variations of zero forcing arise from the original context of the minimum rank problem: If one places additional restrictions or relaxations on the matrix pattern, those restrictions translate to variations in the color-change rule. For instance, restricting the desired matrices to positive semidefinite and/or skew-symmetric matrices adds additional color-change rules making it easier for vertices to force \cite{zf_skew, zf_psd}. On the other hand, the applications mentioned above indicate that there are other potential meaningful variations of zero forcing beyond the minimum rank problem, as we will study here.

The variation we study in this article is based on the following equivalent notion of zero forcing:

\begin{theorem}[K.-Lin, \cite{zf_sample}]
For a graph $G$, a set of of vertices, $S$, is a zero forcing set if and only if for any $\mathbf{A} \in \mathcal{S}(G)$, the partial vector $\mathbf{x}_S$ (i.e., $\mathbf{x}$ restricted to entries corresponding to $S$) is always sufficient to solve $\mathbf{Ax} = \mathbf{b}$ uniquely (if a solution exists).
\end{theorem}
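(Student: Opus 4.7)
The plan is to translate the zero-forcing game into a linear-algebraic statement about the kernel of matrices in $\mathcal{S}(G)$. The key reformulation I would use is: the partial vector $\mathbf{x}_S$ is sufficient to recover the full solution of $\mathbf{A}\mathbf{x} = \mathbf{b}$ whenever one exists if and only if every $\mathbf{y} \in \ker(\mathbf{A})$ with $\mathbf{y}_S = \mathbf{0}$ must itself be zero. This recasts the theorem as: $S$ is a zero forcing set iff no nonzero null-space vector of any $\mathbf{A} \in \mathcal{S}(G)$ vanishes on $S$.

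For the forward direction, I would assume $S$ is a zero forcing set, fix $\mathbf{A} \in \mathcal{S}(G)$, and take $\mathbf{y} \in \ker(\mathbf{A})$ with $\mathbf{y}_S = \mathbf{0}$. Induction along a fixed forcing chain then shows $y_v = 0$ for every $v \in V(G)$. In the inductive step, if a colored vertex $u$ forces $v$, then the row equation $(\mathbf{A}\mathbf{y})_u = A_{uu} y_u + \sum_{w \sim u} A_{uw} y_w = 0$ has every term already known to vanish except $A_{uv} y_v$. Because $A_{uv} \neq 0$ by the pattern condition defining $\mathcal{S}(G)$, we conclude $y_v = 0$. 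This is the workhorse of the direction and is essentially immediate.

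For the reverse direction I would prove the contrapositive by constructing an explicit witness. Suppose $S$ is not a zero forcing set, and let $T \supseteq S$ be its derived (closure) set, so that $U = V(G) \setminus T$ is nonempty; by maximality, every $v \in T$ that has any neighbor in $U$ has at least two such neighbors. Define $\mathbf{y}$ by $y_u = 1$ for $u \in U$ and $y_v = 0$ for $v \in T$. Then $\mathbf{y}_S = \mathbf{0}$ and $\mathbf{y} \neq \mathbf{0}$, and I must now pick $\mathbf{A} \in \mathcal{S}(G)$ with $\mathbf{A}\mathbf{y} = \mathbf{0}$. For each $v \in T$ with $U$-neighbors $u_1, \dots, u_k$ ($k \geq 2$), the row equation reduces to $\sum_j A_{vu_j} = 0$, which is satisfied by taking, for instance, $A_{vu_1} = -(k-1)$ and $A_{vu_j} = 1$ for $j \geq 2$, all nonzero as required. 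Off-diagonal entries for edges within $U$, within $T$, or from $T$-vertices having no $U$-neighbor can be any nonzero values, and each diagonal $A_{vv}$ is unconstrained, so I use it to enforce the row equations at vertices in $U$. Then $\mathbf{x} = \mathbf{0}$ and $\mathbf{x} = \mathbf{y}$ are distinct solutions of $\mathbf{A}\mathbf{x} = \mathbf{0}$ that agree on $S$, defeating uniqueness.

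The main obstacle is this construction in the reverse direction, especially if $\mathcal{S}(G)$ is taken to consist of symmetric matrices: the choice of $A_{vu}$ for $v \in T$, $u \in U$ then also pins down $A_{uv}$, and one must verify that the row equations at each $u \in U$ remain solvable using only the leftover freedom. Because the diagonal is entirely unconstrained this verification is routine, but it is the one spot requiring explicit bookkeeping. The rest of the argument is a clean induction on the forcing chain in one direction and a direct construction in the other.
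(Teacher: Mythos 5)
The paper does not actually prove this statement---it is imported verbatim from the cited K.--Lin reference---so there is no in-paper proof to compare against. That said, your argument is correct and is the standard one for this equivalence. The reduction of ``$\mathbf{x}_S$ determines $\mathbf{x}$ uniquely'' to ``no nonzero vector of $\ker(\mathbf{A})$ vanishes on $S$'' is exactly right, since two solutions of $\mathbf{Ax}=\mathbf{b}$ differ by a kernel vector. Your forward direction is the classical induction along the forcing process (the same mechanism underlying the bound $M(G)\le |V|-Z(G)$): when $u$ forces $v$, the row equation at $u$ isolates the single unknown term $A_{uv}y_v$ with $A_{uv}\neq 0$. Your reverse direction correctly exploits the key structural fact about the closure $T$ of a failed forcing set: every vertex of $T$ has zero or at least two neighbors in $U=V\setminus T$, which is precisely what lets you choose nonzero off-diagonal entries on the $T$--$U$ edges summing to zero in each row, take $\mathbf{y}=\mathbbm{1}_U$, and absorb the $U$-row equations into the free diagonal. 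Your handling of the symmetric case is also right: the entries $A_{uv}$ forced by symmetry appear in the $U$-rows only against $y_v=0$, so no conflict arises. I see no gap.
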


Let us put this into context. Suppose you know the pattern of interactions of a system and you wish to place sensors on the nodes in order to observe the entire system once it is realized. Using natural laws (such as Kirchhoff's Laws), once the system is realized, it is possible to deduce the status of some nodes that do not have a sensor; hence, it is not necessary to place sensors at every node. However, when placing the sensors, you do not know the exact details other than the rough pattern of interactions. The theorem above says the {\it only} way to guarantee full accurate observability is to place sensors on nodes corresponding to a zero forcing set. Indeed, this is exactly the concept behind the application of zero forcing in power grids and quantum systems: One cannot wait until the system is realized to determine what aspects to monitor or measure. Rather, one must be ready to go beforehand, only knowing the rough pattern of interactions.

The variation we introduce in this study, ``leaky forcing,'' is based upon a variation in the scenario above. In a literal sense, we ask the question: What if there is a leak in the network? (We further define leaky forcing in Section \ref{leak intro}, Definition \ref{leakdef}). If a leak exists in a network at a specific node, then the natural laws could not correctly be applied to further deduce the status of other nodes. Since each force corresponds to applying a linear equation (or physical law) to deduce further information  \cite{zf_aim, zf_sample}, and a leak corresponds to a vertex that is unable to force, we must find a way to force despite these leaks. The goal is to find a set of initially colored vertices that will be able to force regardless of the location of some predetermined number of leaks.

In this article, we contribute the following:
\begin{itemize}
\item We provide preliminaries including a definition of leaky forcing and the {\it $\ell$-forcing number} of a graph, $Z_{(\ell)}(G)$. (Section \ref{leak intro}, Definition \ref{leakyforcingnumberdef})
\item We derive the exact value of $Z_{(\ell)}(G)$ for select families of graphs, including paths, cycles, and wheels. (Section \ref{sec.families})
\item We determine exact values and bounds for Cartesian (Box) products of graphs $Z_{(\ell)}(G \Box H)$ including the specific families such as grid graphs and hypercubes. (Section \ref{sec.boxproducts})
\item We construct and implement an integer program algorithm to compute $Z_{(\ell)}(G)$ exactly. (Section \ref{sec.alg})
\end{itemize}

\section{Preliminaries, Introduction to Leaks} \label{leak intro}


\begin{definition}[Graph Leaks] \label{leakdef} ~\\
A {\it leak} in a graph $G$ is defined to be one additional vertex adjacent to only one vertex of the original graph $G$ by one new edge.
\end{definition}

One of our goals is to define this game played on graphs with ``leaks." Leaks, which become barriers to the coloring of the graph since they are additional uncolored neighbors and therefore hinder the color-change rule, are assumed to appear in the graph after the set of initially colored vertices is determined and before the color-change rule has been applied to the original graph (i.e., leaks cannot be colored). For a graph on $n$ vertices, there could be between $0$ and $n$ leaks, inclusive, as any vertex in the graph could have a leak and multiple leaks on the same vertex are treated as a single leak.

The vertices of the original graph which have leaks then find themselves with a new uncolored neighbor, just as a pipe with a leak unexpectedly finds itself giving water a new direction to travel. Our task then, given a predetermined amount of possible leaks, $\ell$, to a graph $G$, is to find a set of initially colored vertices so that regardless of where the $\ell$ leaks appear in $G$, all of $G$ still becomes colored under the color-change rule.

\begin{definition}[$\ell$-Forcing Number of a Graph $G$, $Z_{(\ell)}(G)$] \label{leakyforcingnumberdef} ~\\
For a graph $G$ and nonnegative integer $\ell$, the $\ell$-forcing number, $Z_{(\ell)}(G)$, is the minimum number of vertices which must be initially colored to ensure $G$ can be completely colored despite the addition of {\it any} $\ell$ leaks.
\end{definition}

It is worth noting that $Z_{(0)}(G)$ is the zero forcing of $G$ in the original sense.

We will look at this $\ell$-forcing number on various families of graphs in Section \ref{sec.families} and in Section \ref{sec.boxproducts}. Let us quickly define these graphs and notation. We will use $P_n$ to denote the path graph on $n$ vertices and $C_n$ to denote a cycle graph on $n$ vertices. $K_n$ is the complete graph on $n$ vertices.
The wheel graph, $W_n$, is a graph on $n+1$ vertices constructed from $C_n$ and adding an additional vertex adjacent to all the vertices on the cycle. A full definition of the Cartesian (Box) product of graphs, $G \Box H$ is provided in Section \ref{sec.boxproducts}.

The set of vertices which must be initially colored to ensure the graph is fully colored after application of the color-change rule is here called the {\it $\ell$-forcing set.} The {\it $\ell$-forcing number} is the size of the minimum $\ell$-forcing set.

We now provide some crucial basic results for our study of leaky forcing.

\begin{proposition} \label{zf_chain} For any graph $G$,
\[Z_{(0)}(G) \leq Z_{(1)}(G) \leq Z_{(2)}(G) \leq \dots \leq Z_{(n)}(G).\]
\end{proposition}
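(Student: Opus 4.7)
The plan is to establish the chain by proving each single-step inequality $Z_{(\ell-1)}(G)\le Z_{(\ell)}(G)$ for $1\le\ell\le n$. It suffices to show that every $\ell$-forcing set $S$ is already an $(\ell-1)$-forcing set: this immediately implies that the minimum over $\ell$-forcing sets is at least the minimum over $(\ell-1)$-forcing sets, which is exactly the claimed inequality.

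For this reduction, fix any configuration of $\ell-1$ leaks at vertices $v_1,\dots,v_{\ell-1}\in V(G)$. Since $\ell-1<n$, I can choose a vertex $v_\ell\in V(G)\setminus\{v_1,\dots,v_{\ell-1}\}$ and attach a ``dummy'' leak there to produce an $\ell$-leak configuration. By hypothesis, $S$ colors all of $G$ in this $\ell$-leak configuration via some sequence of forces $\sigma$; I will argue that the same sequence $\sigma$ still colors all of $G$ after the dummy leak at $v_\ell$ is removed.

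The critical observation is that no vertex $u$ with a leak attached can ever execute a force: such a $u$ has its leak as a permanent uncolored neighbor which itself can never be colored, so at the moment $u$ would force, $u$ has at least two uncolored neighbors (its leak and its intended target), violating the color-change rule. Consequently, every force in $\sigma$ is performed by a vertex $u\ne v_\ell$, and removing the leak at $v_\ell$ does not alter the neighborhood of any such $u$. Each force in $\sigma$ therefore remains valid in the $(\ell-1)$-leak setting, so $G$ is still fully colored. The main subtlety is formalizing the ``leaks block their host'' observation; once that is in place, the argument is routine, and iterating the single-step inequality delivers the full chain.
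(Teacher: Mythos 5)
Your proof is correct and follows the same monotonicity idea as the paper's: an $\ell$-leak configuration only adds obstructions relative to an $(\ell-1)$-leak one, so a set resilient to $\ell$ leaks is resilient to fewer. The paper states this in two informal sentences; your version is a more careful elaboration (the dummy-leak reduction and the observation that a leaky vertex can never force an original vertex, since its leak stays uncolored until all its other neighbors are colored), with the only tiny imprecision being that the leak \emph{can} eventually be colored by its host --- but as your parenthetical correctly notes, only after the host has nothing left to force.
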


\begin{proof}
Since leaks disrupt the coloring process, having a set that can color with $n$ leaks is sufficient to color with $\ell \leq n$ leaks. If removing a leak created any change to the coloring process, it would be that without that leak, a vertex that before was unable to force is now able to force.
\end{proof}


\begin{lemma}
\label{ldegree}
For a graph $G$, any $\ell$-forcing set will contain at least those vertices in $G$ with degree $\ell$ or less.
\end{lemma}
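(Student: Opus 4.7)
The plan is to prove the contrapositive: if $v \in V(G)$ has $\deg(v) \le \ell$ and $v$ is not in the initially colored set $S$, then $S$ is not an $\ell$-forcing set. To do this, I will exhibit an explicit placement of $\ell$ leaks that prevents $v$ from ever being colored.

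Let $v$ have neighbors $u_1,\dots,u_d$ with $d=\deg(v)\le\ell$. I place one leak on each $u_i$, which uses $d\le\ell$ leaks; if $d<\ell$, the remaining $\ell-d$ leaks can be placed arbitrarily (say, on $v$ itself or any other vertex). By Definition~\ref{leakdef}, attaching a leak at $u_i$ gives $u_i$ a new pendant neighbor that is permanently uncolored. The color-change rule then never allows $u_i$ to force, since $u_i$ always has at least one uncolored neighbor, namely the leak. In particular, no $u_i$ can force $v$. Since every other vertex of $G$ is non-adjacent to $v$, no vertex at all can force $v$ under the color-change rule in the presence of this leak configuration.

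Because $v\notin S$ by assumption and $v$ can never be forced, $v$ remains uncolored no matter how the color-change rule is iterated. Hence $S$ fails to color all of $G$ with this particular choice of $\ell$ leaks, so $S$ is not an $\ell$-forcing set. Taking the contrapositive gives the lemma: every $\ell$-forcing set must contain all vertices of degree at most $\ell$.

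The argument is largely a definition-chase, so I do not anticipate a serious obstacle; the only subtlety is verifying that leaks may be freely placed on neighbors of $v$ regardless of whether those neighbors are in $S$ (the definition imposes no such restriction), and handling the boundary case $\deg(v)<\ell$ by distributing the surplus leaks harmlessly. Note also that this is consistent with Proposition~\ref{zf_chain} and with the standard fact that isolated vertices (degree $0$) must lie in every zero forcing set, which is recovered here with $\ell=0$.
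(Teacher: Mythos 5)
Your proof is correct and follows essentially the same route as the paper's: place a leak on each of the at most $\ell$ neighbors of an uncolored vertex $v$ of degree at most $\ell$, so that no neighbor can ever force $v$. Your version is slightly more careful about distributing any surplus leaks and about framing the argument as a contrapositive, but the core idea is identical.
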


\begin{proof}
Suppose that $G$ has $\ell$ leaks but not every vertex of degree $\ell$ or less is initially colored. Let $v$ be one such vertex with $\ell$ or fewer neighbors. Then, after the initial coloring of $G$ suppose every neighbor of $v$ has a leak. Then, none of those neighbors will be able to force $v$ to be colored and therefore $G$ could not be colored with that $\ell$-forcing set. Therefore, the $\ell$-forcing set for a graph with $\ell$ leaks must contain at least the vertices in $G$ with degree $\ell$ or less.
\end{proof}

\begin{center}
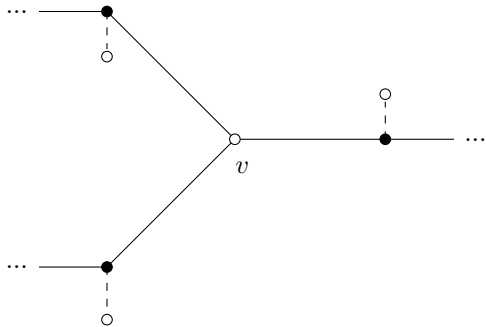
\begin{figure}[!ht]
\begin{tikzpicture}

\draw[thin] (0,0)--(-1.7,1.7);
\draw[thin] (0,0)--(-1.7,-1.7);
\draw[thin] (0,0)--(2,0);
\draw[dashed] (2,0)--(2,0.5);
\draw[dashed] (-1.7,1.7)--(-1.7,1.2);
\draw[dashed] (-1.7,-1.7)--(-1.7,-2.3);

\draw[thin] (2,0)--(3,0);
\draw[thin] (-1.7,1.7)--(-2.7,1.7);
\draw[thin] (-1.7,-1.7)--(-2.7,-1.7);

\draw (0,0) ++(285:0.38cm) node [fill=white,inner sep=4pt](a){$v$};

\draw[fill=white] (0,0) circle (2pt);
\draw[fill] (-1.7,-1.7) circle (2pt);
\draw[fill] (-1.7,1.7) circle (2pt);
\draw[fill] (2,0) circle (2pt);

\draw[fill=white] (-1.7,-2.4) circle (2pt);
\draw[fill=white] (2,0.6) circle (2pt);
\draw[fill=white] (-1.7,1.1) circle (2pt);

\draw (-2.7,1.7) ++(180:0.2cm) node [fill=white,inner sep=4pt](a){...};
\draw (-2.7,-1.7) ++(180:0.2cm) node [fill=white,inner sep=4pt](a){...};
\draw (3,0) ++(000:0.2cm) node [fill=white,inner sep=4pt](a){...};

\end{tikzpicture}
\caption{An example of Lemma \ref{ldegree}. There is no way for $v$ to become colored.}
\label{degreeexample}
\end{figure}
\end{center}

An example of Lemma \ref{ldegree} is provided in Figure \ref{degreeexample}, with $\ell=3$ leaks shown. As we can see in the figure, the vertex $v$ can never be colored because all of its neighbors have a leak. Therefore, $v$ must be a part of the $\ell$-forcing set in order to ensure it can be colored.

Lastly, we review the concept of ``forts'' introduced in \cite{compzf}. In Section, \ref{sec.alg}, we implement a constraint generation method to calculate the $\ell$-forcing number similar to \cite{compzf} using the concept of forts in graphs. For the original concept of zero forcing, the definition found in \cite{compzf} states that a (zero forcing) \emph{fort} is a subset $T$ of the vertex set $V$ such that $T$ is non-empty and that no vertex in $V \backslash T$ is adjacent to exactly one vertex in $T$. This means that a fort can also be thought of as a set $T \subset V$ such that $V \backslash T$ is unable to color the entire graph; or in other words, it is a set of uncolored vertices after iteratively applying the color-change rule. We use this understanding of forts to iteratively update a constraint in our integer program for $\ell$-forcing.

\begin{proposition}[Brimkov-Fast-Hicks \cite{compzf}] \label{prop.forts1}
For a graph $G$, a set of vertices $S$ is a zero forcing set if and only if $S$ intersects every fort.
\end{proposition}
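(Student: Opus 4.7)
The plan is to prove the biconditional by establishing each direction via its contrapositive, in each case tracking what the color-change rule can and cannot do on vertices in a fort versus its complement.

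For the forward direction, suppose $T$ is a fort with $S \cap T = \emptyset$; I want to show $S$ is not a zero forcing set. Since $S \subseteq V \setminus T$, initially every vertex of $T$ is uncolored. I would argue by induction on the number of forces applied that every vertex of $T$ remains uncolored forever. For the inductive step, suppose that at some stage all of $T$ is still uncolored, and that a colored vertex $v$ forces a vertex $u$. If $u \in T$, then $v$ must be colored, hence $v \in V \setminus T$, and since $u$ is the unique uncolored neighbor of $v$ while every vertex of $T$ is uncolored, $u$ must be the unique neighbor of $v$ inside $T$. This contradicts the defining property of a fort, which forbids any vertex in $V \setminus T$ from being adjacent to exactly one vertex of $T$. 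Hence no vertex of $T$ can ever be forced, and $S$ fails to color $G$.

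For the backward direction, assume $S$ is not a zero forcing set, and run the color-change rule to termination. Let $T$ be the (nonempty) set of vertices still uncolored at the end. I would verify that $T$ is a fort by assuming for contradiction that some $v \in V \setminus T$ is adjacent to exactly one vertex $u \in T$: then all neighbors of $v$ other than $u$ lie in $V \setminus T$ and are colored, so the color-change rule forces $v$ to color $u$, contradicting $u \in T$ in the terminal configuration. Hence $T$ is a fort, and because every vertex of $S$ is colored from the start we have $S \subseteq V \setminus T$, so $S \cap T = \emptyset$, giving a fort that $S$ misses.

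There is not really a single hard step here; the whole proof is a direct unpacking of definitions. The one place to be careful is the forward direction, where the induction hypothesis must be the global statement ``all of $T$ is uncolored'' rather than a statement about a single vertex, since the fort condition is phrased in terms of the whole set $T$ and is precisely what rules out the inductive step failing.
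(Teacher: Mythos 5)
Your proof is correct. Note, however, that the paper itself does not prove this proposition---it is quoted from Brimkov--Fast--Hicks \cite{compzf} and used as a black box---so there is no in-paper argument to compare against directly. The closest analogue is the paper's proof of Proposition \ref{prop.fort}, the $\ell$-forcing version, and the comparison is instructive: there the authors define an $\ell$-forcing fort \emph{operationally} (as a terminal uncolored set for some initial coloring and leak placement), which makes one direction of the equivalence true essentially by definition, while the other direction is handled by monotonicity and idempotence of the closure operator $cl(\cdot)$. You instead work with the \emph{structural} definition of a zero forcing fort (no vertex outside $T$ is adjacent to exactly one vertex of $T$), so you must prove both halves of the bridge: that every terminal uncolored set satisfies the structural condition (your backward direction), and that a set satisfying the structural condition can never be breached by the color-change rule (your forward direction, by induction on the number of forces). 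Both arguments are carried out correctly, and your remark about keeping the induction hypothesis global---``all of $T$ is uncolored'' rather than a per-vertex statement---is exactly the right point of care, since it is what lets you conclude that the forced vertex $u$ would be the \emph{unique} neighbor of $v$ in $T$. Your forward direction could equivalently be phrased via the closure-monotonicity argument the paper uses for Proposition \ref{prop.fort}, but the inductive version is, if anything, more self-contained.
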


\begin{definition} ($\ell$-Forcing Fort).

A set of vertices $T \subset V$ in the graph $G$ is an \emph{($\ell$-forcing) fort} (or just {\emph fort}, if the context is clear), if there are subsets of vertices $S, L \subset V$ such that when $S$ is initially colored and the vertices $L$ have leaks, the set of uncolored vertices after iteratively applying the color-change rule as much as possible is precisely $T$.
\end{definition}

\begin{proposition}\label{prop.fort}
For a graph $G$, a set of vertices $S$ is an $\ell$-forcing set if and only if $S$ intersects every $\ell$-forcing fort.
\end{proposition}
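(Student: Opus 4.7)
The plan is to mirror the proof strategy of Proposition \ref{prop.forts1} (the zero forcing fort characterization) while keeping careful track of the leak set. Both directions should follow from the definition of an $\ell$-forcing fort once we isolate the key combinatorial observation: if $T$ is a fort witnessed by some $(S',L')$, then $V \setminus T$ is a ``trap'' under the leaky color-change rule with leak set $L'$, in the sense that forcing starting from any subset of $V \setminus T$ with leaks at $L'$ can never touch $T$.

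I would start with the easier direction ($\Leftarrow$), which is essentially a contrapositive. If $S$ is not an $\ell$-forcing set, then there exists a leak placement $L$ (with at most $\ell$ leaks) such that iteratively applying the color-change rule starting from $S$ terminates with a nonempty set $T$ of uncolored vertices. By the very definition of an $\ell$-forcing fort, this $T$ is a fort (witnessed by $(S,L)$), and clearly $S \cap T = \emptyset$, so $S$ fails to intersect some fort.

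For the $(\Rightarrow)$ direction, I would argue by contradiction: suppose $S$ is an $\ell$-forcing set but misses some $\ell$-forcing fort $T$ with witnesses $(S',L')$. The key lemma I would establish is that every non-leak vertex $v \in V \setminus T$ has either $0$ or at least $2$ neighbors in $T$. This is forced by the definition: in the original terminating run from $S'$ with leaks $L'$, every vertex of $V \setminus T$ is eventually colored and then has no chance to force anything else in $T$, so any non-leak colored vertex with exactly one neighbor in $T$ would have triggered a force and shrunk $T$. Given this structural property, I would argue by induction on the number of color-change steps that, starting instead from $S \subseteq V \setminus T$ with the same leak set $L'$, the colored set stays entirely within $V \setminus T$: any non-leak colored vertex either has no neighbor in $T$ (and thus only forces within $V \setminus T$) or has $\geq 2$ neighbors in $T$, which remain uncolored, so it still has $\geq 2$ uncolored neighbors and cannot force. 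Hence $S$ fails against the leak placement $L'$, contradicting that $S$ is $\ell$-forcing.

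The main subtlety I expect is the invariance argument in the forward direction: one must be careful that the neighbors-in-$T$ count is computed in the full graph and does not change as coloring progresses, so the structural property of $T$ extracted from the witness $(S',L')$ transfers cleanly to the new run from $S$. A minor bookkeeping point is to confirm that the definition of $\ell$-forcing fort implicitly constrains $|L'| \le \ell$ (so that $L'$ is a legal leak placement for $S$); otherwise one should state this as part of the forward direction. Everything else reduces to unwinding the definitions.
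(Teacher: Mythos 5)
Your proof is correct, and the backward direction is identical to the paper's (the residual uncolored set of a failed run is itself a fort disjoint from $S$). In the forward direction you take a genuinely different, more hands-on route. The paper introduces the closure operator $cl(\cdot)$ (the maximal colored set under a fixed leak placement) and argues abstractly: since $S \subseteq V \setminus T$ and the color-change rule is monotone, $cl(S) \subseteq cl(V \setminus T)$, and since $T$ is a fort, $cl(V \setminus T) \neq V$. You instead extract the static structural property of a fort --- every non-leak vertex of $V \setminus T$ has zero or at least two neighbors in $T$ --- and run an invariance induction showing the colored set never escapes $V \setminus T$. Your argument is longer but fills in precisely the two facts the paper leaves implicit, namely that the closure is monotone and that $cl(V\setminus T)=V\setminus T$ (idempotence); your structural lemma is in effect the proof of the latter. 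It also has the side benefit of recovering the static fort characterization analogous to Proposition \ref{prop.forts1}, which is what the constraint-generation algorithm in Section \ref{sec.alg} actually exploits. Your two bookkeeping caveats (the count of neighbors in $T$ is taken in the fixed graph, and the witness leak set satisfies $|L'| \le \ell$) are both legitimate and correctly resolved; the only other point both you and the paper elide is that a fort must be nonempty for the forward direction to yield a contradiction, which is implicit in the definition since a failed run leaves at least one vertex uncolored.
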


\begin{proof}
Suppose $S$ intersects every $\ell$-forcing fort but is not an $\ell$-forcing set, leaving $T$ uncolored. Then, by definition, $T$ is a fort, but $S$ did not intersect $T$, a contradiction.

Conversely, suppose $S$ is an $\ell$-forcing set but does not intersect every $\ell$-forcing fort. Let $T$ be an $\ell$-forcing fort with $S \cap T = \emptyset$ achieved with leaks $L$. Note that the complement of $T$, $V \setminus T$, has $V \setminus T \supseteq S$. Let $cl(\cdot)$ denote the maximal extent of colored vertices after iteratively applying the color-change rule. Note that the color-change rule does not enable additional vertices to be colored by uncoloring others; hence, $cl(S) \subseteq cl(V \setminus T)$. However, since $T$ is a fort, $cl(V \setminus T)$ is not all of $V$, contradicting that $S$ is an $\ell$-forcing set.
\end{proof}

\section{Leaky Forcing For Certain Families of Graphs} \label{sec.families}

\subsection{Paths}

\begin{proposition} \label{pathleakprop} For the path on $n$ vertices, $P_n$,
\[Z_{(\ell)}(P_n) = \left\{ 
\begin{array}{cc}
1 & \text{ if } \ell = 0 \\
2 & \text{ if } \ell = 1 \\
n & \text{ if } \ell \ge 2 \\
\end{array}\right.\]
\end{proposition}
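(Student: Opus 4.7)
The plan is to dispatch the three cases $\ell=0$, $\ell=1$, $\ell\geq 2$ separately, using Lemma \ref{ldegree} to get lower bounds essentially for free and then checking that the claimed sets actually do the job.

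For $\ell=0$, this is the classical zero forcing number of a path; I would just note that coloring a single endpoint, say $v_1$, yields a sequential chain of forces $v_1\to v_2\to v_3\to\cdots\to v_n$, and that $Z_{(0)}(P_n)\geq 1$ is trivial (an empty set forces nothing on a nonempty graph).

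For $\ell=1$, the lower bound $Z_{(1)}(P_n)\geq 2$ is immediate from Lemma \ref{ldegree}, since the two endpoints of $P_n$ have degree $1$ and so must lie in every $1$-forcing set. For the upper bound I would take $S=\{v_1,v_n\}$ and split on where the single leak is attached. If the leak hangs off an interior vertex $v_i$ with $2\leq i\leq n-1$, I would force from the left, $v_1\to v_2\to\cdots\to v_{i-1}\to v_i$ (each $v_j$ with $j<i$ has only one uncolored path-neighbor and is not touched by the leak, and $v_{i-1}$'s only uncolored neighbor is $v_i$), and independently from the right, $v_n\to v_{n-1}\to\cdots\to v_{i+1}$; after these sweeps every original vertex is colored. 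If instead the leak is attached to an endpoint, say $v_1$, then $v_1$ is blocked (it has two uncolored neighbors, $v_2$ and the leak), but $v_n$ can sweep leftward all the way to $v_2$, coloring everything. The case of a leak at $v_n$ is symmetric.

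For $\ell\geq 2$, every vertex of $P_n$ has degree at most $2$, so Lemma \ref{ldegree} forces every vertex into any $\ell$-forcing set, giving $Z_{(\ell)}(P_n)\geq n$; the reverse inequality $Z_{(\ell)}(P_n)\leq n$ is trivial since coloring all of $V(P_n)$ leaves nothing to force.

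I do not expect any serious obstacle here; the only thing requiring care is the $\ell=1$ case, where one must be explicit that the leak's attachment affects only the vertex it hangs off (so $v_{i-1}$ really can force $v_i$ even when $v_i$ carries the leak), and that the endpoint-leak subcase still works by forcing entirely from the opposite end.
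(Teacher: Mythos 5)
Your proposal is correct and follows essentially the same route as the paper: one endpoint for $\ell=0$, both endpoints with Lemma \ref{ldegree} giving the matching lower bound for $\ell=1$, and Lemma \ref{ldegree} applied to all degree-$\le 2$ vertices for $\ell\ge 2$. Your explicit treatment of the subcase where the leak is attached to an endpoint is a small refinement of the paper's argument, which only describes the interior-leak scenario, but the substance is the same.
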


\begin{proof}
If there are zero leaks on a path it is sufficient to color one endpoint, which ensures that the entire graph may be colored. By Lemma \ref{ldegree}, if there is one leak then all vertices with a degree of one must be colored, which includes the two endpoints of the path. Only coloring the two endpoints is sufficient, since each of them will be able to force adjacent vertices until the vertex with the leak is reached. At this point, both vertices adjacent to the vertex with the leak are colored, so both that vertex and the leak may also be colored. For two or more leaks, we know by Lemma \ref{ldegree} that all vertices of the path must be colored to ensure that the entire graph can be colored.
\end{proof}

\subsection{Trees}

Lemma \ref{minusoneleaf} is provided for use in the proof of Proposition \ref{oneleaktree}.

\begin{lemma}
\label{minusoneleaf}
For a tree $T$ with $t \geq 2$ leaves, if $t - 1$ leaves are colored then they form a 0-forcing set for $T$ (an $\ell$-forcing set with $\ell = 0$ leaks).
\end{lemma}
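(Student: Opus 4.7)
The plan is to induct on the number $t$ of leaves of $T$. For the base case $t = 2$, I would note that a tree with exactly two leaves must be a path, since any vertex of degree at least $3$ would generate at least three leaves. Proposition \ref{pathleakprop} then gives $Z_{(0)}(P_n) = 1$, so the single colored endpoint (which is the lone colored leaf) is a $0$-forcing set, and we are done.

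For the inductive step, assume the lemma holds for all trees with fewer than $t$ leaves, let $T$ have $t \geq 3$ leaves, and let $u$ be the uncolored leaf. Since there are at least two colored leaves, I would pick one, call it $v_0$, and follow the maximal pendant path $v_0, v_1, \ldots, v_k$ whose interior vertices $v_1, \ldots, v_{k-1}$ all have degree $2$ in $T$ and whose endpoint $v_k$ has degree at least $3$ in $T$ (such a $v_k$ exists because $t \geq 3$ forces $T$ to have a branch vertex). Applying the color-change rule, $v_0$ forces $v_1$, then $v_1$ forces $v_2$, and so on, culminating with $v_{k-1}$ forcing $v_k$.

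Next, I would delete $\{v_0, v_1, \ldots, v_{k-1}\}$ to obtain the subtree $T'$. The key bookkeeping is that $v_k$ retains degree at least $2$ in $T'$, so the leaves of $T'$ are precisely the leaves of $T$ other than $v_0$; hence $T'$ has $t - 1 \geq 2$ leaves, exactly $t - 2$ of which are colored (all except $u$). The inductive hypothesis applies to $T'$ to show those colored leaves form a $0$-forcing set of $T'$, so $T'$ becomes fully colored. Together with the already-colored pendant path $v_0, \ldots, v_{k-1}$, all of $T$ is colored.

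The hard part, such as it is, is the pendant-path bookkeeping: one must verify that $v_k$ exists (automatic from $t \geq 3$), that no interior vertex of the path is itself a leaf or equals $u$ (automatic since interior vertices have degree $2$ and $u$ is a leaf distinct from $v_0$), and that removing the path preserves both the identity and the count of the remaining leaves. These are routine structural checks, and the whole argument amounts to a clean induction driven by pendant-path peeling.
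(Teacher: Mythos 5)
Your proof is correct, but it runs on a different engine than the paper's. The paper roots $T$ at an arbitrary vertex $b$ and inducts on the number of BFS levels, arguing that the colored leaves push the coloring up level by level with at most one "stalled" vertex per level caused by the lone uncolored leaf. You instead induct on the number of leaves $t$, peeling off a pendant path from a colored leaf $v_0$ to the first branch vertex $v_k$ and applying the hypothesis to the smaller tree $T'$. Your bookkeeping is sound: the base case $t=2$ is a path handled by Proposition \ref{pathleakprop} (which precedes the lemma, so there is no circularity), a branch vertex exists once $t \geq 3$, and deleting $v_0,\dots,v_{k-1}$ removes exactly one leaf and creates no new ones since $v_k$ keeps degree at least $2$. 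The one step you should make explicit is why the forcing certified by the inductive hypothesis \emph{inside} $T'$ remains valid inside $T$: the only vertex of $T'$ with a neighbor outside $T'$ is $v_k$, and that neighbor $v_{k-1}$ is already colored, so no force in $T'$ is obstructed by a deleted vertex (and the extra colored vertex $v_k$ only helps, by monotonicity of zero forcing in the initial set). That is a routine check, not a gap. On balance your leaf-count induction is the tighter argument; the paper's level induction is looser in its handling of the "at most one uncolored vertex per layer" claim, whereas your version localizes all the work to a single pendant path.
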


\begin{proof}

This will be a proof by induction on the number of levels in $T$. Using a predetermined and arbitrary parent vertex $b$ in $T$, level $k$ will be defined as all of the vertices $k$ steps away from $b$.

First, for a tree graph with one level and $t$ leaves, an example of which is shown in Figure \ref{treebasecase}, if $t-1$ leaves are colored then any of them can force the only parent vertex $b$. Once $b$ is colored, then the one uncolored leaf is forced, and all of $T$ is colored.

\begin{center}
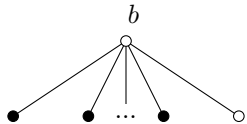
\begin{figure}[!ht]
\begin{tikzpicture}

\draw (0,2) ++(75:0.38cm) node [fill=white,inner sep=4pt](a){$b$};

\draw[thin] (0,2)--(1.5,1);
\draw[thin] (0,2)--(-1.5,1);
\draw[thin] (-0.5,1)--(0,2);
\draw[thin] (0,2)--(0.5,1);

\draw[thin] (0,2)--(0,0.9);
\draw (0,1) ++(000:0cm) node [fill=white,inner sep=4pt](a){...};

\draw[fill=white] (0,2) circle (2pt);
\draw[fill] (-1.5,1) circle (2pt);
\draw[fill] (-0.5,1) circle (2pt);
\draw[fill] (0.5,1) circle (2pt);
\draw[fill=white] (1.5,1) circle (2pt);

\end{tikzpicture}
\caption{A tree graph with one level and $t-1$ leaves colored.}
\label{treebasecase}
\end{figure}
\end{center}

We will assume this to be true for $k$ levels in a tree graph $T$.

For a tree graph on $k+1$ levels with $t$ leaves, assume $t-1$ leaves are colored. Then, when the $t-1$ colored leaves force their parent vertices in layer $k$, the one uncolored leaf will be unable to force. If the uncolored leaf has a colored sibling vertex (a leaf adjacent to the same parent in layer $k$), then its parent in $k$ will be forced by that sibling. If the uncolored leaf has no sibling vertex, then its parent in layer $k$ will not be forced. So, at most, there will be one uncolored leaf vertex on layer $k$, if layer $k+1$ is omitted once it has forced layer $k$.

As a result, if $t-1$ leaves are colored in a tree graph $T$ with $t \geq 2$ leaves, then they form a 0-forcing set for $T$.

\end{proof}

\begin{proposition} \label{oneleaktree}

For a tree graph $T$ with $t$ leaves (except for $T = K_2 = P_2$), $$Z_{(1)}(T) = t.$$  Further, the optimum 1-forcing set exactly contains exactly the leaves of $T$.

\end{proposition}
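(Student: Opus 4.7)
The plan is to establish the two bounds $Z_{(1)}(T) \ge t$ and $Z_{(1)}(T) \le t$, with the upper bound achieved precisely by the leaf set, which gives both the value and the ``moreover'' claim. The lower bound follows immediately from Lemma \ref{ldegree} with $\ell = 1$: every 1-forcing set must contain all vertices of degree at most one, i.e., all $t$ leaves. Since a 1-forcing set of size $t$ then has no room for non-leaf vertices, assuming the upper bound, the leaves constitute the unique optimal 1-forcing set.

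For the upper bound, I would show that the set $L$ of all leaves is itself a 1-forcing set. Fix an arbitrary leak vertex $v$ and verify that starting from $L$ and iteratively applying the color-change rule (with $v$ disabled as a forcer) colors all of $T$. If $v$ is a leaf, then the other $t - 1$ leaves form a 0-forcing set by Lemma \ref{minusoneleaf}. Since the leak on $v$ only restricts $v$'s own ability to force (it does not prevent other vertices from forcing $v$ or anything else), the remaining $t-1$ leaves suffice to color the entire tree on their own.

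When $v$ is internal, I would proceed by induction on $|V(T)|$, with $T = P_3$ as the base case (easily checked by hand). For the inductive step, pick any leaf $\ell$ of $T$, with unique neighbor $p$. Because $T \neq P_2$, $p$ is internal, hence initially uncolored, so $\ell$ forces $p$ in the first round. This force is valid even when $p = v$, since only $v$'s outgoing forces are blocked by its leak, not forces directed toward $v$. Now consider $T' = T - \ell$, a tree on $n-1$ vertices with $v$ still present (as $v \neq \ell$). After the first force, the colored vertices restricted to $T'$ form a superset of the leaf set of $T'$: the only vertex whose degree changes between $T$ and $T'$ is $p$, and $p$ is colored regardless of whether it becomes a leaf of $T'$. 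Applying the inductive hypothesis to $T'$ with leak still on $v$, ZF from the leaves of $T'$ colors $T'$, and by monotonicity of ZF in the starting set this carries over to our actual (larger) colored set; together with $\ell$ already colored, all of $T$ becomes colored.

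The main subtle point is verifying that $T'$ has at least two leaves so the inductive hypothesis applies. If $t \ge 3$, then removing $\ell$ leaves at least two original leaves intact in $T'$. If $t = 2$, then $T$ is a path and $p$ has degree $2$ in $T$, so $p$ becomes a new leaf of $T'$, keeping the leaf count at $2$. These routine degree checks, along with care that $v \neq \ell$ so $v$ really is a vertex of $T'$, are the only bookkeeping obligations; everything else is a direct consequence of Lemmas \ref{ldegree} and \ref{minusoneleaf} together with the monotonicity of the color-change rule.
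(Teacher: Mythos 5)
Your proof is correct, and it shares the paper's broad strategy --- the lower bound from Lemma \ref{ldegree}, the upper bound by showing the leaf set itself is a $1$-forcing set, and Lemma \ref{minusoneleaf} disposing of the case where the leak sits on a leaf --- but the induction you run for the internal-leak case is genuinely different. The paper roots the tree at an arbitrary vertex and inducts on the number of levels: the colored leaves force upward level by level until the forcing front reaches the level containing the leak, at which point Lemma \ref{minusoneleaf} is invoked on the residual rooted subtree. You instead run a strong induction on $|V(T)|$: peel off a single leaf $\ell$ (which cannot carry the leak, since the leak is internal), observe that $\ell$ forces its necessarily uncolored neighbor $p$ in the first round, and hand $T-\ell$ to the inductive hypothesis after checking that the colored set contains the leaf set of $T-\ell$. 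Your version buys cleaner bookkeeping: the paper's argument must track an entire forcing front and asserts that each level forces the next ``unimpeded,'' whereas you only ever justify one force before recursing, and monotonicity of the color-change rule does the rest; you also handle the uniqueness claim (``exactly the leaves'') more explicitly than the paper does. The only points worth spelling out fully are (i) that a force valid in $T-\ell$ remains valid in $T$ because the deleted vertex $\ell$ is already colored, and (ii) that a leak on a leaf $v$ truly never matters, because a leaf is forced by, and never needs to force, its unique neighbor in the process driven by the other $t-1$ leaves; both are one-line checks consistent with the level of detail in the paper's own proof.
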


\begin{proof}

This will be a proof by induction on the number of levels of $T$.

For our base case, consider a tree with one parent vertex $b$ and $t$ leaves in the first level, all of which are colored.

If one leak is added to this graph, there are the following two cases: either it is added to a leaf vertex or it is added to the parent vertex $b$.

\emph{Case 1 - The leak is added to a leaf vertex.}
If the leak is added to a leaf vertex $v$, then $v$ will be unable to force. However, by Lemma \ref{minusoneleaf}, the other $t - 1$ leaves form a 0-forcing set for $T$ without the leak. Once all of $T$ is colored, then the leak is the only uncolored vertex in $T$ and then it can be colored by $v$.

\emph{Case 2 - The leak is added to the parent vertex.}
If the leak is added to the parent vertex $b$, then every leaf will be able to force upward one level and color $b$. At this point, the only uncolored vertex in $T$ is the leak, which can then be colored by $b$.

We will assume this to be true for a tree $T$ with $k$ levels. For a tree on $k+1$ levels, we have two cases: either the leak is added to a leaf on level $k+1$ or the leak is added to a vertex not on level $k+1$.

\emph{Case 1 - The leak is on level $k+1$.}
If the leak is added to a leaf vertex $v$ on level $k+1$, then $v$ will be unable to force. The other $t - 1$ leaves can force level $k$ and at this point we have a tree with $k$ levels and at most one uncolored leaf. By Lemma \ref{minusoneleaf}, this is sufficient to color this tree on $k$ levels. Then, the only uncolored vertex is the leak, which can be forced by $v$.

\emph{Case 2 - The leak is not on level $k+1$.}
If the leak not added to level $k+1$, then all $t$ leaves will be able to force levels $k, k-1, k-2,\dots$ . They can force unimpeded through the levels until they reach the level, let's assume level $j$, which has the leak. Then, every vertex on level $j$ except the vertex with the leak, vertex $u$, will be able to force level $j-1$. We then have a subgraph that is a tree on $j-1$ levels with at least all leaves but one colored. By Lemma \ref{minusoneleaf}, this subgraph can be colored. Then, the only uncolored vertex in $T$ is the leak, which can be colored by $u$.

\end{proof}

\subsection{Cyles}

\begin{proposition} \label{cycle} For the cycle on $n$ vertices, $C_n$,
\[Z_{(\ell)}(C_n) = \left\{ 
\begin{array}{cc}
2 & \text{ if } \ell = 0, 1 \\
n & \text{ if } \ell \ge 2 \\
\end{array}\right.\]
\end{proposition}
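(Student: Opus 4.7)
The plan is to split the argument into three regimes based on $\ell$: the classical case $\ell = 0$, the intermediate case $\ell = 1$, and the saturated case $\ell \geq 2$. The two extreme cases can be dispatched quickly using results already in the paper, so the heart of the proof lies in the $\ell = 1$ case.

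For $\ell = 0$ I would cite (or briefly re-derive) the standard fact that $Z(C_n) = 2$: coloring any two adjacent vertices $v_1, v_2$ suffices, since $v_2$ then forces $v_3$, which forces $v_4$, and so on around the cycle. For $\ell \geq 2$, every vertex of $C_n$ has degree exactly $2 \leq \ell$, so Lemma \ref{ldegree} forces every vertex into any $\ell$-forcing set, yielding $Z_{(\ell)}(C_n) = n$ immediately. Together with the trivial upper bound $Z_{(\ell)}(G) \leq |V(G)|$ (coloring everything), this settles these two regimes.

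For $\ell = 1$, the lower bound $Z_{(1)}(C_n) \geq 2$ is immediate from Proposition \ref{zf_chain}. For the matching upper bound, I would again take $S = \{v_1, v_2\}$ (adjacent) and show it is a $1$-forcing set by a case analysis on the location of the single leak, say at vertex $v_k$. If $k \notin \{1,2\}$, then $v_2$ forces clockwise through $v_3, v_4, \dots, v_{k-1}, v_k$ while $v_1$ simultaneously forces counter-clockwise through $v_n, v_{n-1}, \dots, v_{k+1}$; neither direction is impeded, because every intermediate vertex has degree $2$ and no leak. Once both cycle-neighbors of $v_k$ are colored, $v_k$ has exactly one remaining uncolored neighbor (its leak) and can finish the coloring. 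If instead $k \in \{1,2\}$, the leaky initial vertex cannot force at the first step, but the other initial vertex propagates all the way around the cycle unimpeded; at the end, the leaky initial vertex has both of its cycle-neighbors colored and can force the leak.

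The main obstacle is the bookkeeping in the $\ell = 1$ case: one must check that the forcing wave never stalls before reaching $v_k$, and that at the crucial final step the leaky vertex indeed has only one uncolored neighbor left. Both points reduce to the same observation, namely that in $C_n$ every non-leaky vertex has degree exactly $2$, so as soon as one of its cycle-neighbors is colored it can force the other; and once $v_k$ itself is colored with both cycle-neighbors colored, its leak is the unique uncolored neighbor. No other subtleties arise, so the case analysis is the entire content of the $\ell=1$ argument.
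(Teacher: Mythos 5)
Your proposal is correct and follows essentially the same route as the paper: two adjacent initially colored vertices for $\ell=0,1$ with a case analysis on whether the single leak sits on an initial vertex or not, and Lemma \ref{ldegree} for $\ell\ge 2$. The only (harmless) additions are your explicit lower-bound citation of Proposition \ref{zf_chain} and the final remark about the leaky vertex forcing its leak, both of which the paper leaves implicit.
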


\begin{proof}
If there are zero leaks on the cycle, then we only need to color two adjacent vertices in order to force the entire graph. Both of the initially colored vertices have just one uncolored neighbor, so they can both force around the cycle until the entire graph is colored. This initial coloring of two adjacent vertices also works if there is one leak on the graph. With one leak, we have the following two cases:

\emph{Case 1 - The leak is added to a colored vertex.}
If the leak is on one of the colored vertices, then the other may force around the cycle until it reaches the leaky vertex.

\emph{Case 2 - The leak is added to an uncolored vertex.}
If the leak is on an initially uncolored vertex, then both colored vertices may force until they reach the vertex with the leak. Once both adjacent vertices to the vertex with the leak are colored, then the entire graph may be colored.

For two or more leaks, we know by Lemma \ref{ldegree} that all vertices of the cycle must be colored to ensure that the entire graph can be colored.
\end{proof}

\subsection{Complete Graphs}

\begin{proposition} For the complete graph on $n$ vertices, $K_n$, \[Z_{(\ell)}(K_n) = \left\{ 
\begin{array}{cc}
n-1 & \text{ if } \ell \le n-2 \\
n & \text{ if } \ell \ge n-1 \\
\end{array}\right.\]
\end{proposition}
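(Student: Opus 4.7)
The plan is to handle the two pieces of the piecewise formula separately.

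For the case $\ell \ge n-1$: every vertex of $K_n$ has degree exactly $n-1$, which is at most $\ell$, so Lemma \ref{ldegree} immediately implies that every vertex must lie in any $\ell$-forcing set. This gives $Z_{(\ell)}(K_n) \ge n$, and the matching upper bound (color every vertex) is trivial.

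For the case $\ell \le n-2$, I would treat the upper and lower bounds separately. For the upper bound, take $S$ to be any set of $n-1$ vertices and let $v$ be the unique vertex outside $S$. An adversary placing $\ell \le n-2$ leaks creates a leak at no more than $\ell$ distinct vertices, and $\ell < |S| = n-1$, so at least one $u \in S$ has no leak attached. Since $K_n$ is complete, $u$ is adjacent to every other vertex of the original graph, and every such vertex is colored except $v$. Thus $u$'s only uncolored neighbor is $v$, so $u$ forces $v$ and all of $V$ becomes colored.

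For the lower bound, suppose $|S| \le n-2$. Then there are at least two uncolored vertices $v_1, v_2$. Because $K_n$ is complete, every $u \in S$ has both $v_1$ and $v_2$ as uncolored neighbors, so no $u \in S$ can force. This failure occurs already in the zero-leak scenario, so $S$ is not even a $0$-forcing set, and by Proposition \ref{zf_chain} we obtain $Z_{(\ell)}(K_n) \ge n-1$ for every $\ell \ge 0$.

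The argument has no real obstacle; the only minor subtlety is to notice in the upper bound that a leak placed on the uncolored vertex $v$ does not change $u$'s uncolored-neighbor count, since a leak is, by definition, only adjacent to its host vertex.
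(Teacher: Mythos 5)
Your proof is correct and follows essentially the same route as the paper's: the $\ell \ge n-1$ case via Lemma \ref{ldegree}, the upper bound for $\ell \le n-2$ via the observation that at least one of the $n-1$ colored vertices is leak-free and can force the lone uncolored vertex, and the lower bound via the fact that $n-1$ colored vertices are needed for any force to occur at all. Your version is slightly more explicit about the lower bound (invoking Proposition \ref{zf_chain}) and about why a leak on the uncolored vertex is harmless, but the substance is identical.
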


\begin{proof}
In order to force, a vertex must be colored and have all but one neighbor colored. Since every vertex in $K_n$ has $n-1$ neighbors, the minimum number of initially colored vertices in order for any vertex to force is $n-1$: the vertex that will force and $n-2$ of its neighbors. Once $n-1$ vertices are colored, there are $n-1$ colored vertices all adjacent to one uncolored vertex. For the case in which $\ell \leq n-2$, it is not possible for all of these colored vertices to have a leak. As a result, there will be at least one colored vertex with no leak and therefore only one uncolored neighbor, which once colored, will color the entire graph.

For $n-1$ or more leaks, we know by Lemma \ref{ldegree} that all vertices of $K_n$ must be colored to ensure that the entire graph can be colored.
\end{proof}

\subsection{Wheels}

\begin{proposition} For the wheel graph on $n+1$ vertices, $W_n$, \[Z_{(\ell)}(W_n) = \left\{ 
\begin{array}{cc}
3 & \text{ if } \ell = 0, 1 \\
\lceil{2n/3}\rceil +1  & \text{ if } \ell = 2 \\
n  & \text{ if } n > \ell > 2 \\
n + 1  & \text{ if } \ell = n, n + 1 \\
\end{array}\right.\]
\end{proposition}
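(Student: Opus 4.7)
The formula has four regimes, and the proof treats them separately; write $v_1,\dots,v_n$ for the cycle vertices of $W_n$ and $h$ for the hub. For $\ell\in\{0,1\}$ I take $S=\{h,v_1,v_2\}$: with no leak, $v_1$ and $v_2$ propagate forcing in opposite directions around $C_n$; with one leak, a quick case analysis on the leak's location (on $h$, on one of $v_1,v_2$, or on some farther $v_j$) confirms that forcing still reaches every cycle vertex and then swallows the leak. The matching lower bound $3$ is the standard $Z_{(0)}(W_n)=3$ combined with Proposition~\ref{zf_chain}, itself verified directly by noting that no two-vertex initial set leaves any colored vertex with a unique uncolored neighbor. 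For $3\le\ell\le n-1$, Lemma~\ref{ldegree} applied to the degree-$3$ cycle vertices forces $V(C_n)\subseteq S$, giving $|S|\ge n$; conversely $S=V(C_n)$ suffices because with fewer than $n$ leaks at least one cycle vertex is leak-free and forces $h$, after which each leak is forced by its now-unambiguous host. For $\ell\in\{n,n+1\}$, Lemma~\ref{ldegree} additionally demands $h$ (degree $n$), so $|S|=n+1$ and $S=V(W_n)$ works trivially.

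The case $\ell=2$ is the substantive one. For the upper bound, pick $C\subseteq V(C_n)$ of size $\lceil 2n/3\rceil$ so that the uncolored cycle vertices $U=V(C_n)\setminus C$ have pairwise cyclic distance at least $3$ (the repeating ``$c\,c\,u$'' pattern, adjusted at the wraparound when $3\nmid n$), and set $S=\{h\}\cup C$. Every $u\in U$ then has both cycle-neighbors in $C$, and every $c\in C$ has at most one cycle-neighbor in $U$ (using that $U$ has pairwise cyclic distance $\ge 3$). Given any two adversarial leaks I split into two subcases: (A) both leaks land on the two cycle-neighbors of a single $u_0\in U$, in which case every other $u'\in U$ retains a leak-free cycle-neighbor in $C$ (its cycle-neighbors being distinct from those of $u_0$) whose own only uncolored neighbor is $u'$, so $u'$ is forced; once every other $u'$ has been colored, $u_0$ is the sole uncolored cycle-neighbor of $h$, so $h$ forces $u_0$; or (B) every $u\in U$ retains a leak-free cycle-neighbor that forces it directly. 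The remaining leaks are then forced by their (now unambiguous) hosts.

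For the $\ell=2$ lower bound, assume $|S|\le\lceil 2n/3\rceil$ and produce a $2$-fort disjoint from $S$, contradicting Proposition~\ref{prop.fort}. Let $U$ be the uncolored cycle vertices. If $h\in S$ then $|U|\ge\lfloor n/3\rfloor+1$, which exceeds the independence number $\lfloor n/3\rfloor$ of the squared cycle $C_n^2$, so some $u,w\in U$ lie at cyclic distance at most $2$. The set $T=\{u,w\}$ is a $2$-fort after placing the two leaks on the (at most two) cycle-neighbors of $u,w$ lying outside $\{u,w\}$: the hub sees both $u$ and $w$ as uncolored; any shared cycle-neighbor of $u,w$ sees both as uncolored; each leaky vertex has two uncolored neighbors (one of $u,w$ and its own leak); and every other colored cycle vertex has no neighbor in $T$. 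If instead $h\notin S$, then $|U|\ge\lfloor n/3\rfloor$; either some pair in $U$ lies at cyclic distance $\le 2$ (same fort), or $U$ is a maximum independent set of $C_n^2$, in which case $T=U\cup\{h\}$ is a fort: every colored cycle vertex adjacent to some $u\in U$ already has both $u$ and $h$ as uncolored neighbors in $T$, and only the ``exposed'' colored cycle vertices (those whose both cycle-neighbors are colored) need leaks. The arc-length arithmetic of a maximum independent set in $C_n^2$---all arcs of length $2$ when $3\mid n$, exactly one arc of length $3$ when $n\equiv 1\pmod 3$, and either one arc of length $4$ or two arcs of length $3$ when $n\equiv 2\pmod 3$---caps the number of exposed vertices at $2$, so two leaks suffice. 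This final $n\bmod 3$ bookkeeping is the main obstacle in the proof.
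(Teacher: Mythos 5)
Your proof is correct, and for the cases $\ell\in\{0,1\}$, $2<\ell<n$, and $\ell\in\{n,n+1\}$ it matches the paper's argument essentially verbatim (same initial sets, same appeal to Lemma~\ref{ldegree}). The substantive difference is the $\ell=2$ lower bound, where your route is genuinely different from --- and considerably more complete than --- the paper's. The paper only argues that no two \emph{adjacent} uncolored cycle vertices may remain, and then asserts that fewer than $\lceil 2n/3\rceil+1$ colored vertices would create such a pair; but excluding adjacent uncolored pairs alone only forces about $\lceil n/2\rceil$ colored cycle vertices (an alternating pattern has no adjacent uncolored pair), so the paper's stated reason does not by itself reach $\lceil 2n/3\rceil$. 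Your fort-based argument via Proposition~\ref{prop.fort} supplies what is actually needed: every pair of cycle vertices at cyclic distance $\le 2$ is a $2$-fort (leaks on their outer neighbors, with the hub blocked because it sees both), which forces the uncolored cycle vertices to be independent in $C_n^2$ and hence at most $\lfloor n/3\rfloor$ in number; and the residual case where the hub is uncolored is closed by the fort $U\cup\{h\}$, where your arc-length count showing exactly $n\bmod 3\le 2$ ``exposed'' colored vertices (equivalently, $\sum(g_i-2)=n-3\lfloor n/3\rfloor$ over the gaps $g_i\ge 2$) is precisely the point that makes two leaks suffice. Your upper-bound verification (the two subcases depending on whether both leaks sit on the two cycle-neighbors of a single uncolored vertex) is also spelled out more carefully than in the paper, which describes the coloring pattern but does not check that it forces. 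In short: same constructions throughout, but your $\ell=2$ analysis repairs a real gap in the published lower-bound argument.
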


\begin{proof}
If the number of leaks is 0 or 1, coloring three vertices is sufficient to ensure the entire graph can be colored. For either of these amounts of leaks, all of $W_n$ can be colored if we initially color the center vertex and two adjacent vertices on the outside of the wheel. When there are zero leaks, both of the colored vertices on the outside of the wheel can force their uncolored neighbors, which can continue forcing unimpeded until all of the graph is colored. When there is one leak, we have the following three cases to consider, as illustrated in Figure \ref{oneleakwheel}:

\emph{Case 1 - The leak is on the center vertex.} In this case, the forcing pattern can continue in the same manner as when there are no leaks, as the center vertex does not need to force any of the vertices on the outside of the wheel.

\emph{Case 2 - The leak is on a colored vertex on the outside of the wheel.} In this case, the initially colored vertex with the leak is unable to force. However, the other colored vertex on the outside of the wheel is able to force its uncolored neighbor, which can then continue forcing all $n$ vertices on the outside of the wheel until the vertex with the leak is reached.

\emph{Case 3 - The leak is on an initially uncolored vertex.} Here, we can begin forcing the graph as if there were zero leaks and continue forcing in this manner from both initially colored vertices on the outside of the wheel. They can both force unimpeded around the outside of the wheel until they both reach the vertex with the leak, at which point all of $W_n$ will be colored.

\begin{center}
\begin{figure}[h]
\centering
\begin{tikzpicture}

\draw[] (0,0) arc (30:180:1);

\draw[] (3,0) arc (30:180:1);

\draw[] (6,0) arc (30:180:1);

\draw[dashed] (0,0) arc (30:-180:1);

\draw[dashed] (3,0) arc (30:-180:1);

\draw[dashed] (6,0) arc (30:-180:1);

\draw (-0.866025,-2) ++(0:0cm) node [fill=white,inner sep=4pt](a){ Case 1 };

\draw[fill] (-0.866025,-0.5) circle (2pt);

\draw[thin] (-0.866025,-0.5)--(-1.86603,-0.5);
\draw[thin] (-0.866025,-0.5)--(-0.866025,0.5);
\draw[thin] (-0.866025,-0.5)--(0.7071-0.866025,0.2071);
\draw[thin] (-0.866025,-0.5)--(-0.7071-0.866025,0.2071);

\draw[thin,dashed] (-0.866025,-0.5)--(0.133975,-0.5);
\draw[thin,dashed] (-0.866025,-0.5)--(-0.7071-0.866025,-1.2071);
\draw[thin,dashed] (-0.866025,-0.5)--(-0.5,-0.8);

\draw[dashed,fill=gray!40] (0.133975,-0.5) circle (2pt);
\draw[dashed,fill=gray!40] (-0.7071-0.866025,-1.2071) circle (2pt);

\draw[fill=white] (-0.5,-0.8) circle (2pt);

\draw[fill=gray!40] (-1.86603,-0.5) circle (2pt);
\draw[fill] (-0.866025,0.5) circle (2pt);
\draw[fill=gray!40] (0.7071-0.866025,0.2071) circle (2pt);
\draw[fill] (-0.7071-0.866025,0.2071) circle (2pt);

\draw[->,dashed] (-0.866025,0.5)  to [out=45,in=90,looseness=1.5] (0.7071-0.866025,0.3071);
\draw[->,dashed] (-0.7071-0.866025,0.2071)  to [out=135,in=150,looseness=1.5] (-1.96603,-0.5);

\draw[->,dashed] (0.7071-0.766025,0.2071)  to [out=15,in=45,looseness=1.5] (0.233975,-0.5);
\draw[->,dashed] (-1.96603,-0.5)  to [out=230,in=190,looseness=1.5] (-0.7071-0.966025,-1.2071);

\draw (-0.866025+3,-2) ++(0:0cm) node [fill=white,inner sep=4pt](a){ Case 2 };

\draw[fill] (-0.866025+3,-0.5) circle (2pt);

\draw[thin] (-0.866025+3,-0.5)--(-1.86603+3,-0.5);
\draw[thin] (-0.866025+3,-0.5)--(-0.866025+3,0.5);
\draw[thin] (-0.866025+3,-0.5)--(0.7071-0.866025+3,0.2071);
\draw[thin] (-0.866025+3,-0.5)--(-0.7071-0.866025+3,0.2071);

\draw[thin,dashed] (-0.866025+3,-0.5)--(0.133975+3,-0.5);
\draw[thin,dashed] (-0.866025+3,-0.5)--(-0.7071-0.866025+3,-1.2071);
\draw[thin,dashed] (-0.866025+3,0.5)--(-0.866025+3.5,0.9);

\draw[dashed,fill=gray!40] (0.133975+3,-0.5) circle (2pt);
\draw[dashed,fill=gray!40] (-0.7071-0.866025+3,-1.2071) circle (2pt);

\draw[fill=white] (-0.866025+3.5,0.9) circle (2pt);

\draw[fill=gray!40] (-1.86603+3,-0.5) circle (2pt);
\draw[fill] (-0.866025+3,0.5) circle (2pt);
\draw[fill=gray!40] (0.7071-0.866025+3,0.2071) circle (2pt);
\draw[fill] (-0.7071-0.866025+3,0.2071) circle (2pt);

\draw[->,dashed] (-0.7071-0.866025+3,0.2071)  to [out=135,in=150,looseness=1.5] (-1.96603+3,-0.5);

\draw[->,dashed] (-1.96603+3,-0.5)  to [out=230,in=190,looseness=1.5] (-0.7071-0.966025+3,-1.2071);

\draw[->,dashed] (0.133975+3.1,-0.5)  to [out=45,in=15,looseness=1.5] (0.7071-0.766025+3,0.2071);

\draw (-0.866025+6,-2) ++(0:0cm) node [fill=white,inner sep=4pt](a){ Case 3 };

\draw[fill] (-0.866025+3+3,-0.5) circle (2pt);

\draw[thin] (-0.866025+3+3,-0.5)--(-1.86603+3+3,-0.5);
\draw[thin] (-0.866025+3+3,-0.5)--(-0.866025+3+3,0.5);
\draw[thin] (-0.866025+3+3,-0.5)--(0.7071-0.866025+3+3,0.2071);
\draw[thin] (-0.866025+3+3,-0.5)--(-0.7071-0.866025+3+3,0.2071);

\draw[thin,dashed] (-0.866025+3+3,-0.5)--(0.133975+3+3,-0.5);
\draw[thin,dashed] (-0.866025+3+3,-0.5)--(-0.7071-0.866025+3+3,-1.2071);
\draw[thin,dashed] (0.133975+3+3,-0.5)--(0.133975+3+3.4,-0.8);

\draw[dashed,fill=gray!40] (0.133975+3+3,-0.5) circle (2pt);
\draw[dashed,fill=gray!40] (-0.7071-0.866025+3+3,-1.2071) circle (2pt);

\draw[fill=white] (0.133975+3+3.4,-0.8) circle (2pt);

\draw[fill=gray!40] (-1.86603+3+3,-0.5) circle (2pt);
\draw[fill] (-0.866025+3+3,0.5) circle (2pt);
\draw[fill=gray!40] (0.7071-0.866025+3+3,0.2071) circle (2pt);
\draw[fill] (-0.7071-0.866025+3+3,0.2071) circle (2pt);

\draw[->,dashed] (-0.866025+6,0.5)  to [out=45,in=90,looseness=1.5] (0.7071-0.866025+6,0.3071);
\draw[->,dashed] (-0.7071-0.866025+6,0.2071)  to [out=135,in=150,looseness=1.5] (-1.96603+6,-0.5);

\draw[->,dashed] (0.7071-0.766025+6,0.2071)  to [out=15,in=45,looseness=1.5] (0.233975+6,-0.5);
\draw[->,dashed] (-1.96603+6,-0.5)  to [out=230,in=190,looseness=1.5] (-0.7071-0.966025+6,-1.2071);

\end{tikzpicture}
\caption{An illustration of the three cases possible with one leak on a wheel graph.}
\label{oneleakwheel}
\end{figure}
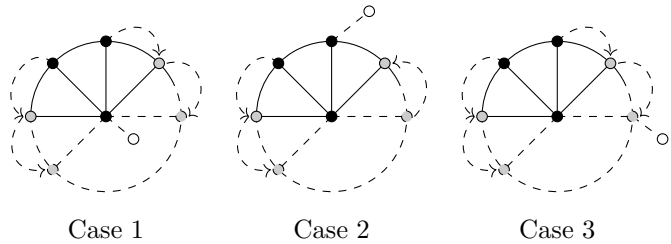
\end{center}

Next, if the number of leaks is 2, we must initially color vertices in a slightly different pattern. As with 0 or 1 leaks, the center vertex and two adjacent vertices on the outside of the wheel must be colored. We must then follow the pattern of leaving one neighboring vertex (to this original pair of colored vertices) uncolored and coloring the next two adjacent vertices for all $n$ vertices on the outside of the graph. If $n$ is not divisible by 3, we still color a pair of vertices in this pattern, even if that results in a path of 3 or 4 vertices colored along the outside of the wheel. This pattern will result in $\lceil{2n/3}\rceil +1$ vertices becoming initially colored. We can not have two adjacent uncolored vertices on the outside of the wheel, since the two leaks could be on their respective colored neighbors on the outside of the wheel, preventing them from becoming colored. This case is shown in Figure \ref{2leakwheel}. If we colored fewer than $\lceil{2n/3}\rceil +1$ vertices, this case would be possible. Therefore, $\lceil{2n/3}\rceil +1$ vertices must be initially colored to ensure the graph can be colored with 2 leaks.

\begin{center}
\begin{figure}[h]
\centering
\begin{tikzpicture}

\draw[thin,dashed] (0,6)--(0.25,6.45);
\draw[thin,dashed] (0.7071+0.45,5-0.7071)--(0.7071,5-0.7071);

\draw [black, thin] (0,5) circle [radius=1 cm];
\draw[fill] (0,5) circle (2pt);

\draw[fill] (-1,5) circle (2pt);
\draw[fill] (0,6) circle (2pt);
\draw[fill] (0,4) circle (2pt);

\draw[fill] (0.7071,5-0.7071) circle (2pt);
\draw[fill] (0-0.7071,5.7071) circle (2pt);
\draw[fill] (0-0.7071,5-0.7071) circle (2pt);
\draw[thin] (0,6)--(0,4);
\draw[thin] (0.7071,5.7071)--(0-0.7071,5-0.7071);
\draw[thin] (0-0.7071,5.7071)--(0.7071,5-0.7071);
\draw[thin] (-1,5)--(1,5);
\draw[fill=white] (1,5) circle (2pt);
\draw[fill=white] (0.7071,5.7071) circle (2pt);

\draw[fill=white] (0.25,6.45) circle (2pt);
\draw[fill=white] (0.7071+0.45,5-0.7071) circle (2pt);

\end{tikzpicture}
\caption{An illustration of a case possible on a wheel with two leaks which prevent the wheel from becoming fully colored.}
\label{2leakwheel}
\end{figure}
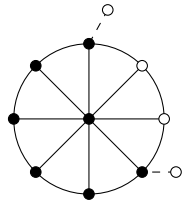
\end{center}

When the number of leaks is greater than 2, we must then color all $n$ vertices on the outside of the wheel (those with degree equal to 3) by Lemma \ref{ldegree}. Coloring these will be sufficient to force the center vertex of the wheel if the number of leaks is less than $n$ because at least one vertex on the outside of the wheel will not have a leak and will therefore be able to force the center vertex.

If the number of leaks is equal to $n$ or $n+1$, we must color all $n+1$ vertices of the wheel graph by Lemma \ref{ldegree} to ensure the graph can be colored, knowing the center vertex has a degree of $n$.

\end{proof}

\section{Cartesian Products} \label{sec.boxproducts}

First, we will define the Cartesian (Box) product of graphs.

\begin{definition} \label{boxproductdef} (Cartesian Product of Graphs).

As found in \cite{MR0209178}, the Cartesian (Box) product of two finite graphs $G$ and $H$, denoted $G \Box H$, is a graph whose vertex set consists of the ordered pairs $(x,y)$, where $x \in V(G)$ and $y \in V(H)$. Two vertices $(x,y)$ and $(x^\prime,y^\prime)$ are adjacent in the Cartesian product if and only if $x = x^\prime$ and $y$ is adjacent to $y^\prime$ in $H$ or $y=y^\prime$ and $x$ is adjacent to $x^\prime$ in $G$.
\end{definition}
For example, the product between a path on 3 vertices and a path on 5 vertices, denoted $P_3 \Box P_5 $, would create a $ 3 \times 5 $ grid of vertices, illustrated in Figure \ref{boxproductexample}.

\begin{center}
\begin{figure}[h]
\centering
\begin{tikzpicture}[scale=0.7]

\draw[step=1cm,black,thin] (0,0) grid (4,2);

\draw [decorate,decoration={brace,amplitude=10pt},xshift=0pt,yshift=0pt] (0,2.5) -- (4,2.5) node [black,midway,xshift=0pt,yshift=20pt] {\footnotesize $P_5$};

\draw [decorate,decoration={brace,amplitude=10pt},xshift=0pt,yshift=0pt] (-0.5,0) -- (-0.5,2) node [black,midway,xshift=-20pt,yshift=0pt] {\footnotesize $P_3$};

\draw[fill=white] (0,1) circle (4pt);
\draw[fill=white] (0,2) circle (4pt);
\draw[fill=white] (0,0) circle (4pt);

\draw[fill=white] (1,1) circle (4pt);
\draw[fill=white] (1,2) circle (4pt);
\draw[fill=white] (1,0) circle (4pt);

\draw[fill=white] (2,0) circle (4pt);
\draw[fill=white] (2,2) circle (4pt);
\draw[fill=white] (2,1) circle (4pt);

\draw[fill=white] (3,2) circle (4pt);
\draw[fill=white] (3,1) circle (4pt);
\draw[fill=white] (3,0) circle (4pt);

\draw[fill=white] (4,0) circle (4pt);
\draw[fill=white] (4,2) circle (4pt);
\draw[fill=white] (4,1) circle (4pt);

\end{tikzpicture}
\caption{The Cartesian product $P_3 \Box P_5$.}
\label{boxproductexample}
\end{figure}
\end{center}


\begin{definition} \label{hypercube}(Hypercube).

The $d$-dimensional hypercube, $Q_d$, is the graph
\[ Q_d := \underbrace{K_2 \Box \ldots \Box K_2}_{n \text{ times}} .\]
\end{definition}
In particular, $Q_1 = K_2$ and $Q_2 = C_4$.

\begin{proposition} \label{boxprodleaks}
For graphs $G$ and $H$ and a number of leaks $\ell \geq 0$, \[ Z_{(\ell)} (G \Box H) \leq min\{|G|Z_{(\ell)}(H),|H|Z_{(\ell)}(G)\}.\]
\end{proposition}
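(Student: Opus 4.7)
The plan is to argue that if $S_H$ is a minimum $\ell$-forcing set of $H$, then $V(G) \times S_H$ is an $\ell$-forcing set of $G \Box H$; this yields the first term $|V(G)|\cdot Z_{(\ell)}(H)$ in the minimum, and the other term follows by swapping the roles of $G$ and $H$.

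The key structural observation is that in $H$ a vertex $y$ carrying a leak can never force one of its non-leak neighbors: the attached leak remains one of $y$'s uncolored neighbors until $y$ forces the leak itself, and that force requires every other neighbor of $y$ to already be colored, at which point $y$ has nothing else left to force. Given any placement of at most $\ell$ leaks $L$ on $G \Box H$, I would consider the column-projection $L^* = \{y \in V(H) : (x,y) \in L \text{ for some } x \in V(G)\}$; then $|L^*| \leq |L| \leq \ell$, and the $\ell$-forcing property of $S_H$ produces a forcing sequence $\sigma$ in $H$ from $S_H$ with leaks at $L^*$ that colors $V(H)$. By the observation above, every non-leak force in $\sigma$ uses a forcer in $V(H)\setminus L^*$, and deleting the leak-forces from $\sigma$ yields a sequence $\sigma'$ that still colors $V(H)$, since a leak's only neighbor is the vertex it is attached to and so skipping a leak-force cannot invalidate any subsequent non-leak force.

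I would then lift $\sigma'$ to $G \Box H$ by executing it in lockstep across the rows: each step $y \to y'$ of $\sigma'$ becomes the simultaneous forces $(x,y) \to (x,y')$ for every $x \in V(G)$. A one-line induction shows the colored set in $G \Box H$ stays of the form $V(G)\times C$, where $C$ tracks the $\sigma'$-state. Each lockstep step is valid in every row because (i) the required row-neighbors of $(x,y)$ are colored by the $H$-force condition on $y$; (ii) the column-neighbors $(x',y)$ for $x' \in N_G(x)$ are colored since $y \in C$ makes the entire column $V(G) \times \{y\}$ colored by the inductive hypothesis; and (iii) $(x,y)$ carries no leak, since $y \notin L^* \supseteq L_x$. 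When $\sigma'$ terminates we have $C = V(H)$, and so all of $G \Box H$ is colored.

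The main obstacle is the structural observation about leaky forcers: without it, different rows would generically require different forcing sequences (their leak sets $L_x$ can disagree), and the product form $V(G) \times C$ of the colored set would not be preserved under any lockstep execution; with the observation in hand, a single sequence $\sigma'$ serves every row and the lockstep lift becomes essentially mechanical.
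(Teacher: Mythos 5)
Your proof is correct and follows essentially the same route as the paper: project the leaks onto one factor, take an $\ell$-forcing set of that factor, and lift its forcing process to $G \Box H$ in lockstep across the copies of the other factor. The only difference is bookkeeping — the paper enlarges the leak set to $L_G \times H$ so that every copy sees an identical leak pattern and then appeals to monotonicity in the number of leaks, whereas you keep the original leaks and instead observe that a leaked vertex can never force an original neighbor, so the forcers of your sequence $\sigma'$ are leak-free in every row; both justifications work.
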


\begin{proof}
Without loss of generality, let us look at $G \Box H$ as $|H|$ copies of the graph $G$ and assume $|H|Z_{(\ell)}(G)\ \leq |G|Z_{(\ell)}(H)$. Choose a minimum $\ell$-forcing set of $G$, $S$. Color all vertices $S \times H$; that is, color all corresponding vertices to $S$ in all $|H|$ copies of $G$. It suffices to show that $S \times H$ is an $\ell$-forcing set of the graph $G \Box H$.

Choose $\ell$ leaks of $G \Box H$: $(g_1, h_1), \ldots, (g_\ell, h_\ell)$. Let $L_G = \{g_1, \ldots, g_\ell \}$. We will show that $S \times H$ is an $\ell$-forcing set for the set of leaks $L_G \times H$.

Let $S = S_0$ be the set of initially colored vertices of $G$, and let $S_i$ be the set of vertices colored after $i$ applications of the color change rule when forcing with the set of leaks $L_G$. Necessarily, each $S_i$ is an $\ell$-forcing set of $G$. Here, when applying the color-change rule, we apply it to all vertices in the graph in each subsequent application in order to color all vertices possible on each iteration.

Observe that any uncolored neighbor of any vertex $(g,h) \in S \times H$, $(a,b)$, must have $b=h$; in other words, any uncolored neighbor of $(g,h)$ must be in the same copy of $G$ as $(g,h)$. Therefore, since $S$ is an $\ell$-forcing set of $G$, all vertices $S_1 \times H$ can become colored after applying one iteration of the color-change rule on $G \Box H$. Inductively, since $S_i$ is an $\ell$-forcing set of $G$, whenever $S_i \times H$ is colored, $S_{i+1} \times H$ will be colored. It follows that all of $G \times H$ will be colored. Hence, $S \times H$ is an $\ell$-forcing set of $G \Box H$ with leaks $L_G \times H$. Since any set of possible leaks is a subset of some $L_G \times H$, this completes the proof.
\end{proof}

\subsection{Leaky Forcing on Hypercubes}

In this subsection, we will compute the $\ell$-forcing number of hypercubes for some values of $\ell$ as well as provide upper bounds in other cases. To begin, let us recall the zero forcing number of the hypercube.

\begin{proposition}[AIM Group \cite{zf_aim}] \label{zf_cube}
For $d \ge 1$,
\[ Z_{(0)}(Q_d) = 2^{d-1}.\]
\end{proposition}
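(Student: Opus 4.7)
The plan is to prove matching bounds $Z_{(0)}(Q_d) \leq 2^{d-1}$ and $Z_{(0)}(Q_d) \geq 2^{d-1}$.

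For the upper bound, I would exhibit an explicit zero forcing set. Identifying $V(Q_d)$ with $\{0,1\}^d$, take the ``face''
\[ S = \{ v \in \{0,1\}^d : v_d = 0\}, \]
which has $|S| = 2^{d-1}$. Every $v \in S$ has exactly one neighbor outside $S$ (namely the vertex obtained by flipping $v_d$) and $d-1$ neighbors inside $S$, so $v$ forces its outside partner in a single round and $S$ is a zero forcing set. Alternatively, since $Q_d = K_2 \Box Q_{d-1}$, Proposition \ref{boxprodleaks} with $\ell = 0$ gives $Z_{(0)}(Q_d) \leq 2\, Z_{(0)}(Q_{d-1})$, which combined with the base case $Z_{(0)}(Q_1) = 1$ yields the bound by induction.

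For the lower bound, I would invoke the classical matrix inequality $Z_{(0)}(G) \geq M(G)$ from \cite{zf_aim}, where $M(G)$ is the maximum nullity over all symmetric matrices in $\mathcal{S}(G)$. It then suffices to exhibit a matrix in $\mathcal{S}(Q_d)$ of nullity at least $2^{d-1}$. I would construct such a matrix recursively, exploiting the block decomposition arising from $Q_d = Q_{d-1}\Box K_2$. Let $A_1 = \begin{pmatrix}0 & 1 \\ 1 & 0\end{pmatrix}$ and set
\[ A_d = \frac{1}{\sqrt{2}}\begin{pmatrix} A_{d-1} & I \\ I & -A_{d-1} \end{pmatrix}. \]
The off-diagonal block $\tfrac{1}{\sqrt{2}}I$ matches exactly the perfect-matching edges joining the two copies of $Q_{d-1}$ inside $Q_d$, and the diagonal blocks $\pm\tfrac{1}{\sqrt{2}} A_{d-1}$ lie in $\mathcal{S}(Q_{d-1})$ by induction, so $A_d \in \mathcal{S}(Q_d)$. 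A direct block computation gives $A_d^2 = I$ (the off-diagonal blocks cancel and each diagonal block reduces to $\tfrac{1}{2}(A_{d-1}^2 + I) = I$), while $\operatorname{tr}(A_d) = 0$ since the two diagonal blocks are negatives of one another. Hence the eigenvalues of $A_d$ are only $\pm 1$, each with multiplicity $2^{d-1}$, and $A_d - I \in \mathcal{S}(Q_d)$ (altering only diagonal entries, which are unconstrained) has nullity exactly $2^{d-1}$.

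The main obstacle is conceptual rather than computational: recognizing that the lower bound on $Z_{(0)}$ is controlled by the maximum nullity, and then cooking up a matrix attaining the target nullity. The Cartesian-product structure $Q_d = Q_{d-1}\Box K_2$ suggests the block recursion above; once the pattern check $A_d \in \mathcal{S}(Q_d)$ is in place, the identities $A_d^2 = I$ and $\operatorname{tr}(A_d) = 0$ are routine verifications, and the two bounds combine to give $Z_{(0)}(Q_d) = 2^{d-1}$.
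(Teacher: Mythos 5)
Your proposal is correct. Note that the paper does not prove this proposition at all --- it is quoted from the AIM group's paper \cite{zf_aim} --- so there is no internal proof to compare against; your argument is essentially the standard one from that reference. Both halves check out: the face $\{v : v_d = 0\}$ forces all of $Q_d$ in one round since each of its vertices has exactly one neighbor outside the face, and your recursive matrix satisfies $A_d^2 = I$ and $\operatorname{tr}(A_d)=0$, so its eigenvalues are $\pm 1$ with equal multiplicities $2^{d-1}$, whence $A_d - I \in \mathcal{S}(Q_d)$ has nullity $2^{d-1}$ and the bound $Z_{(0)}(Q_d) \ge M(Q_d) \ge 2^{d-1}$ follows. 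The one ingredient you import without proof is the inequality $M(G) \le Z_{(0)}(G)$, but that is the foundational result of \cite{zf_aim} and is fair to cite.
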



\begin{proposition} For $d \ge 2$,
\[ Z_{ (1) }(Q_d) = 2^{d-1}. \]
\end{proposition}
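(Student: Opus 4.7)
The lower bound is immediate: Proposition~\ref{zf_chain} and Proposition~\ref{zf_cube} together give $Z_{(1)}(Q_d) \ge Z_{(0)}(Q_d) = 2^{d-1}$, so only the upper bound requires real work.

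For the upper bound, my plan is to induct on $d$ using the Cartesian-product bound in Proposition~\ref{boxprodleaks}, writing $Q_d = Q_{d-1} \Box K_2$. If the inductive hypothesis $Z_{(1)}(Q_{d-1}) = 2^{d-2}$ is in hand, then
\[
Z_{(1)}(Q_d) \;\le\; |K_2|\cdot Z_{(1)}(Q_{d-1}) \;=\; 2\cdot 2^{d-2} \;=\; 2^{d-1},
\]
which is exactly the desired bound. The only subtlety is that the induction must start at $d=2$, not $d=1$: since $Z_{(1)}(Q_1) = Z_{(1)}(K_2) = 2$ by Proposition~\ref{pathleakprop}, the product bound would only yield $Z_{(1)}(Q_2) \le 4$, which is too loose. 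I would therefore handle $d=2$ directly via $Q_2 = C_4$ and Proposition~\ref{cycle}, which gives $Z_{(1)}(C_4) = 2 = 2^{2-1}$, and then let the induction propagate for $d\ge 3$. Unpacking this recursion also reveals the optimal set concretely: it is one $(d-1)$-dimensional face of $Q_d$.

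If I wanted a self-contained constructive proof instead of invoking Proposition~\ref{boxprodleaks}, I would exhibit $S = \{x \in \{0,1\}^d : x_d = 0\}$ and verify by cases on the leak location $v$ that $S$ is a $1$-forcing set. When $v_d = 1$, every $x \in S$ still has $x + e_d$ as its unique uncolored neighbor and forces it in a single parallel round, so $V(Q_d)$ is fully colored. When $v \in S$, the leak blocks $v$ from forcing, so every $x \in S\setminus\{v\}$ forces $x + e_d$ and the only remaining uncolored vertex is $v + e_d$. The hard part is this last rescue, and it is exactly where the hypothesis $d \ge 2$ is used: pick any neighbor $u = v + e_d + e_i$ of $v + e_d$ with $i \ne d$; then $u$ has no leak, its $S$-side neighbor $v + e_i$ is colored (it lies in $S\setminus\{v\}$), and its other upper-face neighbors were colored in the first round, so $u$'s unique uncolored neighbor is $v + e_d$, and $u$ forces it. Either route matches the lower bound and completes the proof.
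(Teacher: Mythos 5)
Your main argument is correct and essentially the paper's: the lower bound is obtained identically from Propositions \ref{zf_chain} and \ref{zf_cube}, and the upper bound rests on Proposition \ref{boxprodleaks} together with $Z_{(1)}(C_4)=2$ from Proposition \ref{cycle}, the only cosmetic difference being that the paper applies the product bound once to the decomposition $Q_d = Q_{d-2} \Box C_4$ while you iterate it through $Q_{d-1} \Box K_2$ with $C_4$ as the base case. Your optional self-contained verification that a single facet $\{x : x_d = 0\}$ is a $1$-forcing set is also sound (and correctly isolates where $d \ge 2$ is needed), but it does not change the substance of the argument.
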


\begin{proof}
By Proposition \ref{boxprodleaks}, since $Q_d = Q_{d-2} \Box C_4$, $Z_{ (1) }(Q_d) \le 2^{d-2}  Z_{ (1) }(C_4)$. By Proposition \ref{cycle}, $Z_{ (1) }(C_4)=2$, so altogether,  $Z_{ (1) }(Q_d) \le 2^{d-1}$. For equality, notice that $Z_{ (0) }(Q_d) = 2^{d-1} \le Z_{ (1) }(Q_d)$
\end{proof}

\begin{proposition}
\[ Z_{ (2) }(Q_3) = 6. \]
\end{proposition}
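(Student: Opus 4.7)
The plan is to establish both bounds separately: the upper bound by a direct construction, and the lower bound via the fort characterization of Proposition~\ref{prop.fort}.

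For $Z_{(2)}(Q_3) \leq 6$, I would take $S = V(Q_3) \setminus \{u,v\}$ where $u,v$ are a pair of antipodal vertices (i.e., at Hamming distance~3) and verify that $S$ is a $2$-forcing set. The essential property is that in $Q_3$ antipodal vertices have disjoint open neighborhoods, so each $w \in N(u)$ has all of its neighbors other than $u$ already in $S$; hence the unique uncolored neighbor of $w$ is $u$. Given any leak set $L$ with $|L| \leq 2$, at most two of the three vertices in $N(u)$ lie in $L$, so at least one non-leak neighbor is available to force $u$; the symmetric argument handles $v$.

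For $Z_{(2)}(Q_3) \geq 6$, by Proposition~\ref{prop.fort} it suffices to show that every $3$-element subset $T \subseteq V(Q_3)$ contains a $2$-forcing fort, since then every $|S|=5$ misses some such fort. I would establish two claims. First, any pair $\{a,b\}$ at distance~2 in $Q_3$ is a $2$-forcing fort: such a pair has exactly two common neighbors (each with two neighbors in $\{a,b\}$, hence unable to force alone) together with exactly one non-common neighbor per endpoint (necessarily distinct, since a shared non-common neighbor would be a common neighbor). Placing the two leaks on the two non-common neighbors blocks every vertex of $V \setminus \{a,b\}$ from forcing into $\{a,b\}$, so the fort condition holds with $|L| = 2$. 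Second, every $3$-element $T \subseteq V(Q_3)$ contains such a distance-$2$ pair: since $Q_3$ is bipartite by weight parity, every odd distance (1 or 3) occurs only between opposite color classes, so pigeonhole places two vertices of $T$ in the same color class, and two distinct vertices of the same class are at an even distance at most~$3$, hence exactly~$2$.

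Combining the claims gives $Z_{(2)}(Q_3) = 6$. Nothing here is computationally deep; the only delicate step is verifying the fort condition for a distance-$2$ pair, which rests on the clean structural fact that every distance-$2$ pair in $Q_3$ has exactly two common neighbors, so that the remaining outside neighbors of the pair amount to precisely two candidate forcers, which can be neutralized by the $\ell=2$ leaks.
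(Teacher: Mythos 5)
Your proof is correct and, for the lower bound, is essentially the paper's own argument: both identify the distance-$2$ pairs as $2$-forcing forts (with the two leaks placed on the two non-common outside neighbors, which are exactly the paper's ``complement strings'') and then observe that hitting all such pairs forces $6$ vertices --- your parity/pigeonhole count and the paper's vertex-cover-of-two-disjoint-$K_4$'s count are the same fact in different clothing. The one genuine difference is the upper bound: the paper leaves an \emph{adjacent} pair $\{000,001\}$ uncolored and needs a short case analysis on where the leaks fall (if both $100$ and $010$ leak, then $001$ is forced first and in turn forces $000$), whereas you leave an \emph{antipodal} pair uncolored, so the two uncolored vertices have disjoint neighborhoods of size $3$ and each is forced independently by a simple counting argument against the $2$ leaks; your construction is arguably cleaner since it avoids the sequential forcing step. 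Both constructions are valid, so the result stands either way.
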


Arguably, the easiest proof for the above proposition is ``the computer said so'' (simply try all $2^8$ possible subsets of vertices each with all 28 possible leak placements). However, we present a more elegant proof using a fort intersection argument more in line with Proposition \ref{prop.fort} and how our algorithm works in Section \ref{sec.alg} based on the work in \cite{compzf}.

\begin{proof}
Let us view the cube as a graph whose vertices are binary strings of length 3.

Color all vertices except 000 and 001.
We will show that this is a $2$-forcing set.
If there is no leak on the vertex 100 or no leak on 010, then 000 will be colored which can then force 001. However, if both 100 and 010 have leaks, then 001 will be colored, after which, 001 forces 000.

To see this is optimal, note any set of vertices consisting of all but a pair of vertices $u_1, u_2$, distance 2 apart is a failed $2$-forcing set (as the leaks could be on the vertices representing the complement strings of $u_1$ and $u_2$, preventing them from forcing); hence, $\{u_1, u_2\}$ is a fort, and by Proposition \ref{prop.fort}, for each pair of vertices distance 2 apart, one of them must be initially colored to be 2-forcing set. This is equivalent to finding a traversal (hitting all edges using vertices) of two disjoint copies of $K_4$ which requires 6 vertices.
\end{proof}




\begin{proposition}  \label{exact_cubes}
\begin{eqnarray*} 
Z_{ (2) }(Q_4) &=& 8 \\
Z_{ (3) }(Q_5) &=& 16.
\end{eqnarray*}
\end{proposition}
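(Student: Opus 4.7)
The plan is to show both equalities by matching the lower bound $Z_{(\ell)}(Q_d) \ge Z_{(0)}(Q_d) = 2^{d-1}$ (immediate from Propositions~\ref{zf_chain} and \ref{zf_cube}) with an explicit upper bound construction. In both cases the $\ell$-forcing set I would use is the ``bottom face'' $B := \{x \in V(Q_d) : x_1 = 0\}$, which has exactly $2^{d-1}$ vertices and induces a copy of $Q_{d-1}$.

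Given leaks partitioned as $L_B \cup L_T$ with $L_B \subseteq B$, the argument proceeds in two stages. Stage one is immediate: any $v \in B \setminus L_B$ has its $d-1$ bottom neighbors colored and its single top neighbor $v'$ uncolored, so $v$ forces $v'$. Afterwards the only uncolored vertices form the set $U := \{v' : v \in L_B\}$, entirely in the top face, with $|U| = |L_B|$.

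Stage two iteratively forces $U$ using top-face forces. For $v' \in U$, call a top neighbor $w$ a \emph{valid forcer} if $w \notin U$, $w \notin L_T$, and no other element of $U$ lies in $w$'s top neighborhood; then $v'$ is $w$'s only uncolored neighbor, since $w$'s bottom partner is always colored. Using the standard hypercube fact that two vertices of $Q_{d-1}$ share exactly $2$ common neighbors when at distance $2$ and $0$ common neighbors otherwise, the number of \emph{in}valid top neighbors of $v'$ is at most $2(|U|-1) + |L_T|$. For $(d,\ell)=(4,2)$ this is at most $2$, strictly less than $|N_{\mathrm{top}}(v')| = 3$, so every $v' \in U$ has a valid forcer and the iteration terminates.

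The main obstacle is $(d,\ell) = (5,3)$ with $|U|=3$ and $L_T = \emptyset$, where the bound $2 \cdot 2 = 4$ exactly matches $|N_{\mathrm{top}}(v')| = 4$ and a priori some $v' \in U = \{a',b',c'\}$ might be \emph{blocked}. I would dispose of this with a short structural lemma: if $a'$ is blocked, then both $b'$ and $c'$ must lie at distance exactly $2$ from $a'$ with the bit supports of $b' - a'$ and $c' - a'$ disjoint (otherwise the invalid-forcer count for $a'$ would fall below $4$), whence $b' + c' = (b' - a') + (c' - a')$ has weight $4$ and $d(b', c') = 4$. Therefore $c'$ contributes zero invalid forcers toward $b'$, so $b'$ has at most $2$ invalid forcers out of $4$ and is not blocked. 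Hence at most one vertex of $U$ is blocked at any time; forcing a non-blocked vertex drops $|U|$ to $2$, after which the easier bound $2(|U|-1) + |L_T| \le 2 < 4$ handles the remainder by iteration. Assembling everything yields $Z_{(2)}(Q_4) \le 8$ and $Z_{(3)}(Q_5) \le 16$, matching the lower bounds.
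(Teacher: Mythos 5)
Your proposal is correct, and it supplies something the paper deliberately does not: for this proposition the paper omits a formal proof and resorts to ``the computer said so'' via the integer program of Section \ref{sec.alg}, remarking only that coloring any subcube $Q_{d-1}$ works and that optimality follows from Propositions \ref{zf_chain} and \ref{zf_cube}. You use exactly that construction (the face $x_1=0$) and the same lower bound, but you replace the computational check of the upper bound with an explicit two-stage forcing argument: first every unleaked bottom vertex forces its top partner, leaving uncolored only the set $U$ of top partners of leaked bottom vertices; then you force $U$ from within the top face, bounding the number of spoiled forcers of a given $v'\in U$ by $2(|U|-1)+|L_T|$ using the standard fact that two hypercube vertices have exactly two common neighbours at distance $2$ and none otherwise. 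This count is strictly below $d-1$ in every case for $Q_4$ and in all but one configuration for $Q_5$, namely $|U|=3$, $L_T=\emptyset$, where it is tight; your disjoint-support lemma handles that case correctly, since a blocked $a'$ forces $b'$ and $c'$ to sit at distance $2$ from $a'$ on complementary coordinate pairs, making $b'$ and $c'$ antipodal in the top $Q_4$ and hence each unblocked. I checked each step and find the argument sound, so what you gain over the paper is a human-verifiable proof of both equalities. One caveat worth recording: the inequality $2(\ell-1) < d-1$ fails for $\ell = d-2$ once $d \ge 6$, so this method does not immediately extend to the pattern $Z_{(d-2)}(Q_d)=2^{d-1}$ that the paper observes in Table \ref{tab.cubes} but declines to conjecture.
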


Unfortunately, we do not have a straightforward proof of the above proposition, and this time, we omit a formal proof and resort to ``the computer said so'' using the computational techniques in Section \ref{sec.alg}.
For $Z_{ (2) }(Q_4)$, one can check that coloring any subcube, $Q_3$, of $Q_4$ is a $2$-forcing set, and this is optimal as $Z_{(0)}(Q_4)=8 \le Z_{ (2) }(Q_4)$ by Propositions \ref{zf_chain} and \ref{zf_cube}. A similar argument applies for $Z_{ (3) }(Q_5)$. \hfill \qed

\begin{proposition} For $d \ge 4$,
\[ Z_{ (2) }(Q_d) = 2^{d-1}. \]
\end{proposition}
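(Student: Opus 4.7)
The plan is to establish matching upper and lower bounds, both equal to $2^{d-1}$, essentially by assembling previously proved facts rather than introducing any new forcing argument.

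For the lower bound, I would combine two results already in the paper: Proposition \ref{zf_chain} gives $Z_{(0)}(Q_d) \le Z_{(2)}(Q_d)$, and Proposition \ref{zf_cube} computes $Z_{(0)}(Q_d) = 2^{d-1}$. Chaining these inequalities yields $Z_{(2)}(Q_d) \ge 2^{d-1}$ immediately.

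For the upper bound, the key observation is the decomposition $Q_d = Q_{d-4} \,\Box\, Q_4$, valid for every $d \ge 4$ (interpreting $Q_0$ as a single vertex in the boundary case $d=4$, so that the bound reduces to the already-known value $Z_{(2)}(Q_4)=8$). Applying the Cartesian product inequality of Proposition \ref{boxprodleaks} with $G = Q_4$ and $H = Q_{d-4}$, together with $Z_{(2)}(Q_4)=8$ from Proposition \ref{exact_cubes}, gives
\[ Z_{(2)}(Q_d) \le |Q_{d-4}| \cdot Z_{(2)}(Q_4) = 2^{d-4} \cdot 8 = 2^{d-1}. \]
Combined with the lower bound, this forces equality.

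The main subtlety, and the only real obstacle, is the choice of which factor to peel off in the decomposition. The more obvious alternative $Q_d = Q_{d-3} \,\Box\, Q_3$, combined with $Z_{(2)}(Q_3) = 6$, would give only $Z_{(2)}(Q_d) \le 6 \cdot 2^{d-3} = \tfrac{3}{2} \cdot 2^{d-1}$, which is too loose. It is precisely because $Z_{(2)}(Q_4)=2^3$ attains the ``efficient'' ratio of exactly half the vertex count of $Q_4$ that the $Q_4$-based decomposition delivers a matching upper bound, whereas the $Q_3$-based decomposition falls short. Thus the threshold $d \ge 4$ in the statement is exactly what is needed to invoke the $Q_4$ building block.
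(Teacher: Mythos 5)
Your proposal is correct and matches the paper's proof essentially verbatim: the upper bound via $Q_d = Q_{d-4} \Box Q_4$, Proposition \ref{boxprodleaks}, and $Z_{(2)}(Q_4)=8$, with the lower bound from Propositions \ref{zf_chain} and \ref{zf_cube}. Your added remark explaining why the $Q_4$ factor (rather than $Q_3$) is the right building block is a nice clarification but does not change the argument.
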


\begin{proof}
By Proposition \ref{boxprodleaks}, since $Q_d = Q_{d-4} \Box Q_4$, $Z_{ (2) }(Q_d) \le 2^{d-4}  Z_{ (2) }(Q_4)
= 2^{d-4} \cdot 8 = 2^{d-1}.$  Equality is achieved, as $Z_{ (0) }(Q_d) = 2^{d-1} \le Z_{ (2) }(Q_d)$ by Propositions \ref{zf_chain} and \ref{zf_cube}.
\end{proof}



\begin{proposition}
\[ Z_{ (3) }(Q_4) = 10 \]
\end{proposition}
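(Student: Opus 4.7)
The plan is to follow the template of Proposition \ref{exact_cubes}: combine an explicit upper-bound construction with the fort-intersection characterization of Proposition \ref{prop.fort}, accepting that the final certification is most cleanly done by the integer-programming algorithm of Section \ref{sec.alg}.

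For the upper bound, I would search for a specific $10$-vertex set $S$ and argue it is a $3$-forcing set. Viewing $Q_4 = Q_3 \Box K_2$ as two $Q_3$ copies $A$ and $B$ joined by a perfect matching, the naive attempt ``color all of $A$, plus two vertices of $B$'' cannot work in general: in each copy of $Q_3$, the four even-weight (resp.\ odd-weight) vertices are pairwise at $Q_3$-distance two, and any three of them form a $3$-fort whose boundary is exactly the three matched vertices in the other copy. This already produces $8$ such ``triangle'' $3$-forts inside $B$, and a direct check shows no two vertices of $B$ can hit all of them. Hence a valid $10$-vertex construction must sacrifice some vertices of $A$ in exchange for broader coverage of $B$. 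The plan is to carry out a small search over such configurations and verify the chosen $S$ either by case analysis on the $\binom{16}{3}$ possible leak placements or by running the algorithm of Section \ref{sec.alg}.

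For the lower bound, I would apply Proposition \ref{prop.fort} and show that every $9$-vertex set misses some $3$-fort. Besides the $16$ pure triangle $3$-forts contained in a single copy, a direct computation yields additional ``mixed'' $3$-forts: for instance, two vertices in $A$ at $Q_3$-distance two together with the $B$-vertex matched to one of their two common $Q_3$-neighbors yields a three-element set whose boundary in $Q_4$ has size exactly three. The goal is to show that the minimum hitting set over this enlarged family of $3$-forts is $10$.

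The main obstacle is precisely this lower bound. The $16$ pure triangle forts alone only force $|S|\ge 8$, which merely recovers $Z_{(0)}(Q_4)$; pushing to $10$ requires a careful bookkeeping of how the mixed $3$-forts interact with any candidate $9$-vertex set, and that argument becomes quickly unwieldy by hand. Following the precedent of Proposition \ref{exact_cubes}, I expect the cleanest finish is ``the computer said so'' via the methods of Section \ref{sec.alg}, which exhaustively certifies both the construction and the infeasibility of a $9$-vertex $3$-forcing set.
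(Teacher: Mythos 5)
Your proposal lands exactly where the paper does: the paper offers no hand proof of this proposition and simply asserts that it follows from the computational techniques of Section \ref{sec.alg} (exhibiting the $10$-vertex set of Figure \ref{fig.cube34}), which is also your ultimate conclusion. Your preliminary fort computations are correct as far as they go --- the eight triangle $3$-forts inside each copy of $Q_3$ do force at least four colored vertices per copy and hence only recover the trivial bound of $8$, and your mixed fort example does have boundary of size exactly three --- but since you defer the final certification of both bounds to the same integer-program machinery, the approach is essentially identical to the paper's.
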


This follows using the computational techniques in Section \ref{sec.alg}. \hfill \qed

An optimal 3-forcing set on 10 vertices can be seen in Figure \ref{fig.cube34}.

\begin{center}
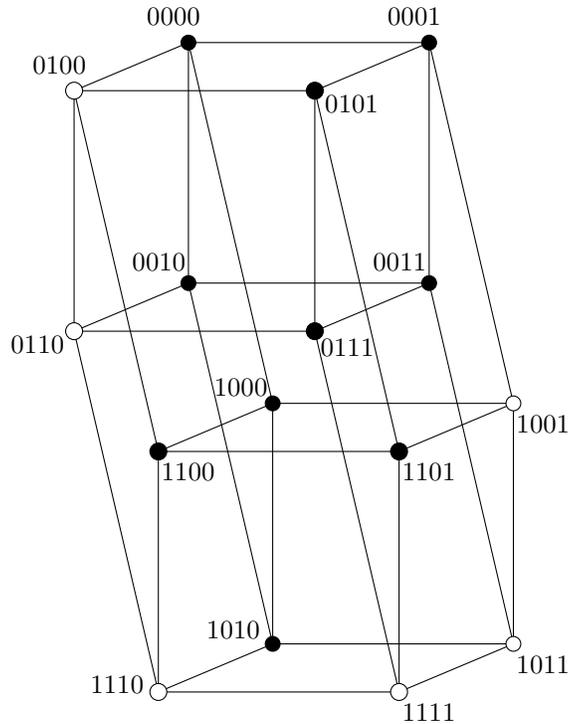
\begin{figure}[!h]
\centering
\begin{tikzpicture}[scale=0.8]


\draw (1.9,0.8) ++(120:0.5cm) node [fill=white,inner sep=4pt](a){0000};
\draw (5.9,0.8) ++(120:0.5cm) node [fill=white,inner sep=4pt](a){0001};
\draw (4,0) ++(340:0.65cm) node [fill=white,inner sep=4pt](a){0101};
\draw (0,0) ++(120:0.5cm) node [fill=white,inner sep=4pt](a){0100};
\draw (4,-4) ++(333:0.6cm) node [fill=white,inner sep=4pt](a){0111};
\draw (1.9,-3.2) ++(145:0.6cm) node [fill=white,inner sep=4pt](a){0010};
\draw (0,-4) ++(200:0.65cm) node [fill=white,inner sep=4pt](a){0110};
\draw (5.9,-3.2) ++(145:0.6cm) node [fill=white,inner sep=4pt](a){0011};

\draw[thin] (0,0)--(0,-4)--(4,-4)--(4,0)--(0,0);
\draw[thin] (1.9,0.8)--(5.9,0.8)--(5.9,-3.2)--(1.9,-3.2)--(1.9,0.8);

\draw[thin] (0,0)--(1.9,0.8);
\draw[thin] (4,0)--(5.9,0.8);
\draw[thin] (0,-4)--(1.9,-3.2);
\draw[thin] (4,-4)--(5.9,-3.2);

\draw (1.4,-6) ++(325:0.6cm) node [fill=white,inner sep=4pt](a){1100};
\draw (1.4,-10) ++(170:0.7cm) node [fill=white,inner sep=4pt](a){1110};
\draw (5.4,-6) ++(325:0.6cm) node [fill=white,inner sep=4pt](a){1101};
\draw (5.4,-10) ++(325:0.6cm) node [fill=white,inner sep=4pt](a){1111};
\draw (3.3,-5.2) ++(153:0.6cm) node [fill=white,inner sep=4pt](a){1000};
\draw (7.3,-5.2) ++(325:0.6cm) node [fill=white,inner sep=4pt](a){1001};
\draw (3.3,-9.2) ++(160:0.7cm) node [fill=white,inner sep=4pt](a){1010};
\draw (7.3,-9.2) ++(325:0.6cm) node [fill=white,inner sep=4pt](a){1011};

\draw[thin] (1.4,-6)--(1.4,-10)--(5.4,-10)--(5.4,-6)--(1.4,-6);
\draw[thin] (3.3,-5.2)--(7.3,-5.2)--(7.3,-9.2)--(3.3,-9.2)--(3.3,-5.2);

\draw[thin] (1.4,-6)--(3.3,-5.2);
\draw[thin] (5.4,-10)--(7.3,-9.2);
\draw[thin] (1.4,-10)--(3.3,-9.2);
\draw[thin] (5.4,-6)--(7.3,-5.2);

\draw[thin] (1.4,-6)--(0,0);
\draw[thin] (5.4,-10)--(4,-4);
\draw[thin] (1.4,-10)--(0,-4);
\draw[thin] (5.4,-6)--(4,0);

\draw[thin] (3.3,-5.2)--(1.9,0.8);
\draw[thin] (3.3,-9.2)--(1.9,-3.2);
\draw[thin] (7.3,-5.2)--(5.9,0.8);
\draw[thin] (7.3,-9.2)--(5.9,-3.2);

\draw[fill,fill=black] (1.9,0.8) circle (3.5pt);
\draw[fill,fill=black] (5.9,0.8) circle (3.5pt);
\draw[fill,fill=black] (1.9,-3.2) circle (3.5pt);
\draw[fill,fill=black] (3.3,-5.2) circle (3.5pt);
\draw[fill,fill=white] (0,0) circle (4pt);
\draw[fill,fill=black] (5.9,-3.2) circle (3.5pt);
\draw[fill,fill=white] (7.3,-5.2) circle (3.5pt);
\draw[fill,fill=black] (1.4,-6) circle (4pt);
\draw[fill,fill=white] (0,-4) circle (4pt);
\draw[fill,fill=black] (3.3,-9.2) circle (3.5pt);
\draw[fill,fill=black] (4,0) circle (4pt);
\draw[fill,fill=white] (7.3,-9.2) circle (3.5pt);
\draw[fill,fill=white] (1.4,-10) circle (4pt);
\draw[fill,fill=black] (5.4,-6) circle (4pt);
\draw[fill,fill=black] (4,-4) circle (4pt);
\draw[fill,fill=white] (5.4,-10) circle (4pt);

\end{tikzpicture}
\caption{An optimal $3$-forcing set of $Q_4$.}
\label{fig.cube34}
\end{figure}
\end{center}






\begin{proposition} For $d \ge 5$,
\[ Z_{ (3) }(Q_d) = 2^{d-1}. \]
\end{proposition}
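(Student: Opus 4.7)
The plan is to mimic the proofs of the analogous propositions already established for $Z_{(1)}(Q_d)$ and $Z_{(2)}(Q_d)$ in the excerpt, combining Proposition \ref{boxprodleaks} for the upper bound with the chain of inequalities from Proposition \ref{zf_chain} for the lower bound. The key observation is that we already know the base case $Z_{(3)}(Q_5) = 16 = 2^4$ from Proposition \ref{exact_cubes}, so all the ``hard work'' (the computer verification for $Q_5$) has already been done.

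For the upper bound, I would write $Q_d = Q_{d-5} \Box Q_5$ for $d \ge 5$ and apply Proposition \ref{boxprodleaks} to obtain
\[
Z_{(3)}(Q_d) \;\le\; |Q_{d-5}| \cdot Z_{(3)}(Q_5) \;=\; 2^{d-5} \cdot 16 \;=\; 2^{d-1}.
\]
For the matching lower bound, I would invoke Propositions \ref{zf_chain} and \ref{zf_cube} to get
\[
Z_{(3)}(Q_d) \;\ge\; Z_{(0)}(Q_d) \;=\; 2^{d-1}.
\]

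There is essentially no obstacle here — the argument is a direct parallel of the proof of $Z_{(2)}(Q_d)=2^{d-1}$ for $d \ge 4$, which uses $Q_d = Q_{d-4} \Box Q_4$ and the base case $Z_{(2)}(Q_4) = 8$. The only thing to double-check is that $d \ge 5$ ensures $d - 5 \ge 0$ so that the decomposition $Q_d = Q_{d-5} \Box Q_5$ is valid (with the convention that $Q_0$ is a single vertex, so $Q_5 \Box Q_0 = Q_5$). The conclusion follows by sandwiching $Z_{(3)}(Q_d)$ between $2^{d-1}$ and $2^{d-1}$.
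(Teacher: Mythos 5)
Your proposal is correct and follows essentially the same route as the paper: the decomposition $Q_d = Q_{d-5} \Box Q_5$ with Proposition \ref{boxprodleaks} and the computed value $Z_{(3)}(Q_5)=16$ for the upper bound, and Propositions \ref{zf_chain} and \ref{zf_cube} for the matching lower bound. No gaps.
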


\begin{proof}
By Proposition \ref{boxprodleaks}, since $Q_d = Q_{d-5} \Box Q_5$, $Z_{ (3) }(Q_d) \le 2^{d-5} Z_{ (3) }(Q_5)
= 2^{d-5} \cdot 16 = 2^{d-1}.$ As before, equality follows by applying Propositions \ref{zf_chain} and \ref{zf_cube}.
\end{proof}


\def\arraystretch{1.5}
\begin{center}
\begin{table} [h] \label{tab.cube}
    \begin{tabular}{ c  c || c | c | c | c |}
    \cline{3-6}
     & & \multicolumn{4}{|c|}{$d$} \\
    \cline{3-6}
      &  & $3$ & $4$ & $5$ & $6$\\ \hline \hline

    \multicolumn{1}{|c|}{} & $0$ & $4$ & $8$ & $16$ & $32$ \\ \cline{2-6}

    \multicolumn{1}{|c|}{ \multirow{2}{1em}{$\ell$} } & $1$ & $4$ & $8$ & $16$ & $32$ \\
    \cline{2-6}
    \multicolumn{1}{|c|}{} & $2$ & $6$ & $8$ & $16$ & $32$\\
    \cline{2-6}
    \multicolumn{1}{|c|}{} & $3$ & $8$ & $10$ & $16$ & $32$ \\
    \cline{2-6}
    \multicolumn{1}{|c|}{} & $4$ & $\vdots$ & $16$ & ? & ? \\
   \hline
    \end{tabular}
\caption{ Summary of values of $Z_{(\ell)}(Q_d)$.}\label{tab.cubes}
\end{table}
\end{center}

A summary of the known values of $Z_{(\ell)}(Q_d)$ can be found in Table \ref{tab.cubes}. While it appears that $Z_{(d-2)}(Q_d) = 2^{d-1}$ (and therefore $Z_{(j)}(Q_d) = 2^{d-1}$ for any $j \le d-2$), we are hesitant to make such a conjecture. Also, the nontrivial values of $Z_{(d-1)}(Q_d)$ do not appear to be powers of 2; however, they do not appear to be following any specific pattern.


\subsection{Leaky Forcing on Grid Graphs, $P_n \Box P_m$}

Before we begin leaky forcing on grid graphs, let us recall the zero forcing number of grid graphs.

\begin{proposition}[AIM Group \cite{zf_aim}] \label{zf_array prop}
For $1 \le n \le m$,
\[ Z_{(0)}(P_n \Box P_m) = n.\]
\end{proposition}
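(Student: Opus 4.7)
The plan is to establish both the upper bound $Z_{(0)}(P_n \Box P_m) \le n$ and the matching lower bound $Z_{(0)}(P_n \Box P_m) \ge n$. For the upper bound, I would exhibit an explicit zero forcing set of size $n$: the entire first column $S = \{(i,1) : 1 \le i \le n\}$. A column-by-column inductive sweep works: if columns $1, \ldots, j$ are all colored, then each vertex $(i,j)$ has its neighbors $(i \pm 1, j)$ and $(i, j-1)$ colored by hypothesis, so the only potentially uncolored neighbor is $(i, j+1)$, which $(i,j)$ can then force. Iterating colors all $m$ columns, so $S$ is zero forcing.

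For the lower bound, I would invoke the classical inequality $Z_{(0)}(G) \ge M(G)$, where $M(G)$ denotes the maximum nullity over matrices in $\mathcal{S}(G)$, and construct a matrix $A \in \mathcal{S}(P_n \Box P_m)$ with nullity at least $n$. A natural candidate is a Kronecker-sum form $A = T_n \otimes I_m + I_n \otimes T_m - cI_{nm}$, where $T_k$ is a symmetric tridiagonal matrix whose zero/nonzero pattern matches $P_k$; the eigenvalues of $A$ decompose as $\lambda_i + \mu_j - c$ over the spectra of $T_n$ and $T_m$. By choosing the entries of $T_n, T_m$ (in particular the diagonal entries, which are free in $\mathcal{S}(G)$) and the shift $c$ so that enough eigenvalue pairs sum to $c$, one arranges for $0$ to be an eigenvalue of multiplicity at least $n$.

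The main obstacle will be the lower bound, and specifically the eigenvalue-alignment step in the Kronecker-sum construction: one must verify that appropriate parameters exist making at least $n$ of the sums $\lambda_i + \mu_j$ equal to $c$, with the resulting eigenvectors linearly independent. The upper bound is a direct verification, but the lower bound requires either this matrix machinery or a more delicate combinatorial argument about forcing-chain structure, arguing that a partition of $V(P_n \Box P_m)$ into fewer than $n$ paths cannot arise from a valid forcing process. Such a direct combinatorial approach tends to be intricate because forcing chains can weave across rows and columns in nontrivial ways, which is why I would favor the max-nullity route as the cleanest plan.
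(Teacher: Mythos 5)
This proposition is quoted from the AIM paper \cite{zf_aim}; the present paper gives no proof of its own, so there is nothing internal to compare against. Your reconstruction is correct and follows the standard route: the first-column sweep gives $Z_{(0)}(P_n\Box P_m)\le n$, and the lower bound via $Z_{(0)}(G)\ge M(G)$ with a Kronecker-sum matrix $T_n\otimes I_m+I_n\otimes T_m$ is exactly how the maximum nullity of the grid is computed in the cited literature. The eigenvalue-alignment step you flag as the main obstacle is handled by the classical inverse eigenvalue theorem for Jacobi matrices (an irreducible symmetric tridiagonal matrix can realize any prescribed set of distinct real eigenvalues), so since $m\ge n$ one may choose the spectrum of $T_m$ to contain the negatives of the $n$ distinct eigenvalues of $T_n$, producing $n$ independent null vectors $v_i\otimes w_i$.
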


As a starting point, Proposition \ref{boxprodleaks} gives us the following upper bound for 1-forcing.

\begin{proposition}
For $1 \le n \le m$,
\[ Z_{(1)}(P_n \Box P_m) \le 2n.\]
\end{proposition}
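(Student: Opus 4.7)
The plan is to apply Proposition \ref{boxprodleaks} combined with the exact values from Proposition \ref{pathleakprop}. Taking $G = P_n$ and $H = P_m$, Proposition \ref{boxprodleaks} yields
\[
Z_{(1)}(P_n \Box P_m) \;\le\; \min\bigl\{\,n\,Z_{(1)}(P_m),\; m\,Z_{(1)}(P_n)\bigr\}.
\]
For $n, m \ge 2$, Proposition \ref{pathleakprop} gives $Z_{(1)}(P_n) = Z_{(1)}(P_m) = 2$, so the right-hand side reduces to $\min\{2n, 2m\} = 2n$, using the hypothesis $n \le m$. That is precisely the claimed inequality.

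To make the construction transparent, one can view the grid as $n$ horizontal copies of $P_m$. The proof of Proposition \ref{boxprodleaks} instructs us to color, in each row, a minimum $1$-forcing set of $P_m$, namely its two endpoints. On the full grid this amounts to coloring the entire leftmost column and the entire rightmost column, for a total of $2n$ initially colored vertices. Any single leak on the grid lies in exactly one row copy of $P_m$; within that row the two endpoints still force through by Proposition \ref{pathleakprop}, and the other $n-1$ rows are unimpeded. The inductive argument in the proof of Proposition \ref{boxprodleaks} then certifies that these $2n$ colored vertices suffice against any single leak.

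The only edge case worth flagging is $n = 1$, where $P_1 \Box P_m = P_m$ and we cannot invoke Proposition \ref{pathleakprop} on $P_n = P_1$ as written; however, that case is immediate since Proposition \ref{pathleakprop} directly gives $Z_{(1)}(P_m) = 2 = 2n$, so the bound still holds (in fact, with equality). There is no real obstacle to overcome: the statement is essentially a corollary of Propositions \ref{pathleakprop} and \ref{boxprodleaks}, and the content is just identifying which of the two terms in the minimum from Proposition \ref{boxprodleaks} is smaller when $n \le m$.
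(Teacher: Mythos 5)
Your proof is correct and follows exactly the route the paper intends: the proposition is stated as an immediate consequence of Proposition \ref{boxprodleaks} together with $Z_{(1)}(P_n)=Z_{(1)}(P_m)=2$ from Proposition \ref{pathleakprop}, and the paper offers no further argument. Your additional remarks on the explicit two-column coloring and the $n=1$ edge case are accurate but not needed.
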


In this section, we will prove various other upper bounds on the the 1-forcing number for grid graphs. Our most general bound which applies to all grid graphs is the following:

\begin{theorem} \label{thm.uppergrid}
For the box product of paths, $P_n \Box P_m$, with $m \ge n$ \[Z_{(1)}(P_n \Box P_m) \leq 2m-n. \]
\end{theorem}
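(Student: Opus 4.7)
The plan is to exhibit an explicit $1$-forcing set of size $2m - n$. Take
\[
S \;=\; \{(i,1) : 1 \le i \le n\} \,\cup\, \{(1,j) : n+1 \le j \le m\} \,\cup\, \{(n,j) : n+1 \le j \le m\},
\]
namely the leftmost column together with two ``arms'' along the top and bottom rows from column $n+1$ through column $m$. A direct count gives $|S| = n + 2(m-n) = 2m - n$, so the theorem reduces to showing that $S$ is a $1$-forcing set of $P_n \Box P_m$.

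The key idea is that the first column is already a zero-forcing set of $P_n\Box P_m$ by Proposition~\ref{zf_array prop}, and natural left-to-right column forcing would color the whole grid in the absence of leaks. A single leak at a vertex $v=(i_0,j_0)$ disrupts only one local transition: the standard wave fills columns $1$ through $j_0$ normally (the leak on $(i_0,j_0)$ does not touch $(i_0,j_0-1)$'s neighborhood), and then fills column $j_0+1$ everywhere except at $(i_0,j_0+1)$. Past that point $(i_0,j_0)$ cannot force because it has two uncolored neighbors, the leak and $(i_0,j_0+1)$, and a direct calculation shows that without further help the uncolored ``cone'' would grow by two rows per column and eventually span the grid. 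The role of the two arms is to seed a secondary forcing wave from the right: each $(1,j)$ with $n+2 \le j \le m$ has only $(2,j)$ uncolored among its neighbors (its two row-$1$ neighbors are colored by the arm itself), so it forces $(2,j)$, and symmetrically the bottom arm forces row $n-1$. These inward forces, together with the primary wave, progressively trim the cone until $(i_0,j_0+1)$ becomes colored, at which point $(i_0,j_0)$ has the leak as its unique remaining uncolored neighbor and forces it.

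The verification is by case analysis on the location $(i_0,j_0)$ of the leak. The natural split is by whether $j_0 \le n$ (leak in the ``left square,'' where the primary left-to-right wave is what gets disrupted and the arms must repair it) or $j_0 \ge n+1$ (leak in the right strip, possibly coinciding with a colored vertex of an arm). In each case one tracks the closure of $S$ under the color-change rule and shows that every vertex eventually gets colored by either the primary wave from column $1$ or the secondary inward wave from $\{(1,j)\}\cup\{(n,j)\}$; only then does the leaky vertex $(i_0,j_0)$ itself force the leak.

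I expect the main obstacle to be the ``seam'' cases where $j_0 \in \{n, n+1\}$ or where $i_0 \in \{1, n\}$: in these sub-cases the two waves meet, a colored vertex of $S$ may itself be the leaky vertex, and one has to verify by explicit step-by-step tracing that the remaining initial colors still suffice. This is the most delicate bookkeeping in the proof, but it does not require any ideas beyond those described above.
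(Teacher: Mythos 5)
Your construction has a genuine gap that precedes the case analysis: the proposed set is not a $1$-forcing set for all $m\ge n$. When $m=n$ both arms are empty and $S$ degenerates to the single leftmost column, and one side of a square grid is not a $1$-forcing set once $n\ge 5$ (the paper remarks on exactly this right after its corollary that $Z_{(1)}(P_n\Box P_n)=n$). Concretely, in $P_5\Box P_5$ color column $1$ and put the leak on $(3,2)$ (row $3$, column $2$). Column $1$ forces column $2$; column $2$ forces column $3$ except $(3,3)$, since $(3,2)$ now has two uncolored neighbors; column $3$ then colors only $(1,4)$ and $(5,4)$ in column $4$; at that point every colored vertex has zero or at least two uncolored neighbors, and the process halts with the fort $\{(3,3),(2,4),(3,4),(4,4)\}\cup\{(i,5):1\le i\le 5\}$ uncolored. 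Your own heuristic exposes the mechanism: the uncolored cone widens by one row on each side per column, while the inward wave seeded by the arms only reaches row $k$ at roughly column $n+k$, so it can close a cone centered near row $\lceil n/2\rceil$ only when $m-n$ is on the order of $n/2$. For $m$ close to $n$ (e.g.\ $m=n$ or $m=n+1$ with $n\ge 5$) the arms are too short or absent, and the bound $2m-n$ is not established by this coloring.

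The paper spends its $n$ ``left'' vertices very differently to avoid exactly this failure: instead of a full column, it colors two adjacent horizontal segments of lengths $\lceil n/2\rceil$ and $\lceil n/2\rceil-1$ anchored at the left edge in the two middle rows of the left $n\times n$ block, plus the last $m-n$ vertices of the top and bottom rows. The middle block launches two independent diagonal waves, one filling the upper-left quadrant and one the lower-left quadrant; a single leak can disrupt at most one of them, and the surviving quadrant together with the right-hand tails recovers the rest (this is the four-quadrant case analysis in the paper's proof). If you want to keep a column-based construction you must either move the column toward the middle of the grid or add enough additional colored vertices to prevent the cone from spanning an entire column, and either modification breaks the count $2m-n$.
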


Before proving Theorem \ref{thm.uppergrid}, we need to establish some definitions which we will use in the proof.

The proof of Theorem \ref{thm.uppergrid} follows immediately from  Proposition \ref{array pattern prop} which we prove below.

\begin{definition} \label{coordinate def}(Coordinate System for $P_n \Box P_m$ Graphs).

In describing these $P_n \Box P_m$ graphs, we will use a coordinate system in which each vertex position is described by $(x,y)$. Here, $x$ describes the vertical position and $y$ describes the horizontal position relative to the vertex in the top left corner of the $P_n \Box P_m$ graph at position $(1,1)$. An example of this coordinate system is shown in Figure \ref{array coordinates} on a $P_7 \Box P_7$ graph.

\end{definition}

\begin{center}
\begin{figure}[h]
\centering
\begin{tikzpicture}[scale=0.7]

\draw[step=1cm,black,thin] (0,0) grid (6,6);

\draw [decorate,decoration={brace,amplitude=10pt},xshift=0pt,yshift=0pt] (0,7.5) -- (6,7.5) node [black,midway,xshift=0pt,yshift=15pt] {\footnotesize $m$};

\draw [decorate,decoration={brace,amplitude=10pt},xshift=0pt,yshift=0pt] (-1.5,0) -- (-1.5,6) node [black,midway,xshift=-17pt,yshift=0pt] {\footnotesize $n$};

\draw (-1,6) ++(180:0cm) node [fill=white,inner sep=2.3pt](a){$1$};
\draw (-1,5) ++(180:0cm) node [fill=white,inner sep=2.3pt](a){$2$};
\draw (-1,4) ++(180:0cm) node [fill=white,inner sep=2.3pt](a){$3$};
\draw (-1,3) ++(180:0cm) node [fill=white,inner sep=2.3pt](a){$4$};
\draw (-1,2) ++(180:0cm) node [fill=white,inner sep=2.3pt](a){$5$};
\draw (-1,1) ++(180:0cm) node [fill=white,inner sep=2.3pt](a){$6$};
\draw (-1,0) ++(180:0cm) node [fill=white,inner sep=2.3pt](a){$7$};

\draw (0,7) ++(180:0cm) node [fill=white,inner sep=2.3pt](a){$1$};
\draw (1,7) ++(180:0cm) node [fill=white,inner sep=2.3pt](a){$2$};
\draw (2,7) ++(180:0cm) node [fill=white,inner sep=2.3pt](a){$3$};
\draw (3,7) ++(180:0cm) node [fill=white,inner sep=2.3pt](a){$4$};
\draw (4,7) ++(180:0cm) node [fill=white,inner sep=2.3pt](a){$5$};
\draw (5,7) ++(180:0cm) node [fill=white,inner sep=2.3pt](a){$6$};
\draw (6,7) ++(180:0cm) node [fill=white,inner sep=2.3pt](a){$7$};

\draw (0,6.4) ++(180:0cm) node [fill=white,inner sep=2.3pt](a){\footnotesize $(1,1)$};

\draw (6,6.4) ++(180:0cm) node [fill=white,inner sep=2.3pt](a){\footnotesize $(1,7)$};

\draw (0,-0.5) ++(180:0cm) node [fill=white,inner sep=2.3pt](a){\footnotesize $(7,1)$};

\draw (6,-0.5) ++(180:0cm) node [fill=white,inner sep=2.3pt](a){\footnotesize $(7,7)$};

\draw[fill=white] (3,6) circle (4pt);
\draw[fill=white] (3,5) circle (4pt);
\draw[fill=white] (3,4) circle (4pt);
\draw[fill=white] (3,3) circle (4pt);
\draw[fill=white] (2,4) circle (4pt);
\draw[fill=white] (2,6) circle (4pt);
\draw[fill=white] (4,5) circle (4pt);

\draw[fill=white] (0,0) circle (4pt);
\draw[fill=white] (0,1) circle (4pt);
\draw[fill=white] (0,2) circle (4pt);
\draw[fill=white] (0,3) circle (4pt);
\draw[fill=white] (0,4) circle (4pt);
\draw[fill=white] (0,5) circle (4pt);
\draw[fill=white] (0,6) circle (4pt);

\draw[fill=white] (1,0) circle (4pt);
\draw[fill=white] (1,1) circle (4pt);
\draw[fill=white] (1,2) circle (4pt);
\draw[fill=white] (1,3) circle (4pt);
\draw[fill=white] (1,4) circle (4pt);
\draw[fill=white] (1,5) circle (4pt);
\draw[fill=white] (1,6) circle (4pt);

\draw[fill=white] (5,0) circle (4pt);
\draw[fill=white] (5,1) circle (4pt);
\draw[fill=white] (5,2) circle (4pt);
\draw[fill=white] (5,3) circle (4pt);
\draw[fill=white] (5,4) circle (4pt);
\draw[fill=white] (5,5) circle (4pt);
\draw[fill=white] (5,6) circle (4pt);

\draw[fill=white] (6,0) circle (4pt);
\draw[fill=white] (6,1) circle (4pt);
\draw[fill=white] (6,2) circle (4pt);
\draw[fill=white] (6,3) circle (4pt);
\draw[fill=white] (6,4) circle (4pt);
\draw[fill=white] (6,5) circle (4pt);
\draw[fill=white] (6,6) circle (4pt);

\draw[fill=white] (2,0) circle (4pt);
\draw[fill=white] (2,5) circle (4pt);
\draw[fill=white] (2,3) circle (4pt);
\draw[fill=white] (2,2) circle (4pt);
\draw[fill=white] (2,1) circle (4pt);

\draw[fill=white] (3,0) circle (4pt);
\draw[fill=white] (3,2) circle (4pt);
\draw[fill=white] (3,1) circle (4pt);

\draw[fill=white] (4,6) circle (4pt);
\draw[fill=white] (4,4) circle (4pt);
\draw[fill=white] (4,3) circle (4pt);
\draw[fill=white] (4,2) circle (4pt);
\draw[fill=white] (4,1) circle (4pt);
\draw[fill=white] (4,0) circle (4pt);

\end{tikzpicture}
\caption{An illustration of the coordinate system used, shown here with $n=m=7$.}
\label{array coordinates}
\end{figure}
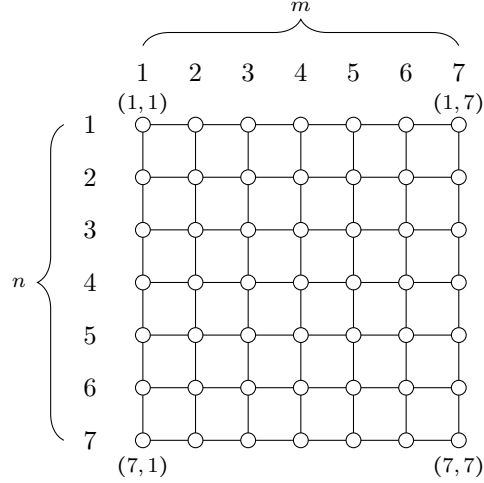
\end{center}

\begin{definition} \label{array pattern}(Initial pattern for $P_n \Box P_m$ graphs).

In the case in which $m>n$, we must follow one of the patterns below. The slight variation is for whether our $n$ value is odd or even. If $m=n$, the pattern remains the same on the left side of the grid, while the colored vertices on the right hand side due to the difference between $m$ and $n$ are omitted.

\emph{Case 1 - $n$ is odd.} If $n$ is odd, we must color vertices as follows, an example of which is provided in Figure \ref{odd n array}. We will use the same coordinate system as described in Definition \ref{coordinate def}. To begin, vertices at the coordinates
$$\left(\Bigl\lceil \frac{n}{2} \Bigr\rceil, 1 \right), \left(\Bigl\lceil \frac{n}{2} \Bigr\rceil, 2 \right), \left(\Bigl\lceil \frac{n}{2} \Bigr\rceil, 3 \right), \ldots , \left(\Bigl\lceil \frac{n}{2} \Bigr\rceil,\Bigl\lceil \frac{n}{2} \Bigr\rceil \right)$$ must be colored. In addition, either the vertices described by $$\left(\Bigl\lceil \frac{n}{2} \Bigr\rceil -1, 1 \right), \left(\Bigl\lceil \frac{n}{2} \Bigr\rceil -1,2 \right), \left(\Bigl\lceil \frac{n}{2} \Bigr\rceil -1, 3 \right), \ldots ,\left(\Bigl\lceil \frac{n}{2} \Bigr\rceil -1,\Bigl\lceil \frac{n}{2} \Bigr\rceil - 1 \right)$$ must be colored, or the vertices described by $$\left(\Bigl\lceil \frac{n}{2} \Bigr\rceil + 1, 1 \right), \left(\Bigl\lceil \frac{n}{2} \Bigr\rceil + 1,2 \right), \left(\Bigl\lceil \frac{n}{2} \Bigr\rceil + 1, 3 \right), \ldots ,\left(\Bigl\lceil \frac{n}{2} \Bigr\rceil + 1,\Bigl\lceil \frac{n}{2} \Bigr\rceil - 1 \right)$$ must be colored. There will be $n$ vertices colored in this portion of the coloring.

To finish the pattern, vertices at the coordinates
$$(n+1, 1), (n + 2,1), (n + 3, 1), \ldots ,(m,1)$$
must be colored, as well as those at the coordinates
$$(n+1, n), (n + 2,n), (n + 3, n), \ldots ,(m,n).$$

Once all vertices in this pattern are colored, there will be $2m-n$ initially colored vertices.

\begin{center}
\begin{figure}[h]
\centering
\begin{tikzpicture}[scale=0.7]

\draw[step=1cm,black,thin] (0,0) grid (9,6);

\draw [decorate,decoration={brace,amplitude=10pt},xshift=0pt,yshift=0pt] (0,7.5) -- (9,7.5) node [black,midway,xshift=0pt,yshift=15pt] {\footnotesize $m$};

\draw [decorate,decoration={brace,amplitude=10pt},xshift=0pt,yshift=0pt] (7,6.5) -- (9,6.5) node [black,midway,xshift=0pt,yshift=15pt] {\footnotesize $m-n$};

\draw[fill=white] (3,6) circle (4pt);
\draw[fill=white] (3,5) circle (4pt);
\draw[fill=white] (3,4) circle (4pt);
\draw[fill] (3,3) circle (4pt);
\draw[fill] (2,4) circle (4pt);
\draw[fill=white] (2,6) circle (4pt);
\draw[fill=white] (4,5) circle (4pt);

\draw[fill=white] (0,1) circle (4pt);
\draw[fill=white] (0,2) circle (4pt);
\draw[fill] (0,3) circle (4pt);
\draw[fill] (0,4) circle (4pt);
\draw[fill=white] (0,5) circle (4pt);
\draw[fill=white] (0,6) circle (4pt);

\draw[fill=white] (1,1) circle (4pt);
\draw[fill=white] (1,2) circle (4pt);
\draw[fill] (1,3) circle (4pt);
\draw[fill] (1,4) circle (4pt);
\draw[fill=white] (1,5) circle (4pt);
\draw[fill=white] (1,6) circle (4pt);

\draw[fill=white] (5,1) circle (4pt);
\draw[fill=white] (5,2) circle (4pt);
\draw[fill=white] (5,3) circle (4pt);
\draw[fill=white] (5,4) circle (4pt);
\draw[fill=white] (5,5) circle (4pt);
\draw[fill=white] (5,6) circle (4pt);

\draw[fill=white] (6,1) circle (4pt);
\draw[fill=white] (6,2) circle (4pt);
\draw[fill=white] (6,3) circle (4pt);
\draw[fill=white] (6,4) circle (4pt);
\draw[fill=white] (6,5) circle (4pt);
\draw[fill=white] (6,6) circle (4pt);

\draw[fill=white] (7,1) circle (4pt);
\draw[fill=white] (7,2) circle (4pt);
\draw[fill=white] (7,3) circle (4pt);
\draw[fill=white] (7,4) circle (4pt);
\draw[fill=white] (7,5) circle (4pt);
\draw[fill] (7,6) circle (4pt);

\draw[fill=white] (8,1) circle (4pt);
\draw[fill=white] (8,2) circle (4pt);
\draw[fill=white] (8,3) circle (4pt);
\draw[fill=white] (8,4) circle (4pt);
\draw[fill=white] (8,5) circle (4pt);
\draw[fill] (8,6) circle (4pt);

\draw[fill=white] (9,1) circle (4pt);
\draw[fill=white] (9,2) circle (4pt);
\draw[fill=white] (9,3) circle (4pt);
\draw[fill=white] (9,4) circle (4pt);
\draw[fill=white] (9,5) circle (4pt);
\draw[fill] (9,6) circle (4pt);

\draw[fill=white] (0,0) circle (4pt);
\draw[fill=white] (1,0) circle (4pt);
\draw[fill=white] (2,0) circle (4pt);
\draw[fill=white] (3,0) circle (4pt);
\draw[fill=white] (4,0) circle (4pt);
\draw[fill=white] (5,0) circle (4pt);
\draw[fill=white] (6,0) circle (4pt);
\draw[fill] (7,0) circle (4pt);
\draw[fill] (8,0) circle (4pt);
\draw[fill] (9,0) circle (4pt);

\draw[fill=white] (2,5) circle (4pt);
\draw[fill] (2,3) circle (4pt);
\draw[fill=white] (2,2) circle (4pt);
\draw[fill=white] (2,1) circle (4pt);

\draw[fill=white] (3,2) circle (4pt);
\draw[fill=white] (3,1) circle (4pt);

\draw[fill=white] (4,6) circle (4pt);
\draw[fill=white] (4,4) circle (4pt);
\draw[fill=white] (4,3) circle (4pt);
\draw[fill=white] (4,2) circle (4pt);
\draw[fill=white] (4,1) circle (4pt);

\draw [decorate,decoration={brace,amplitude=10pt},xshift=0pt,yshift=0pt] (-1.8,0) -- (-1.8, 6) node [black,midway,xshift=-17pt,yshift=0pt] {\footnotesize $n$};

\draw [decorate,decoration={brace,amplitude=10pt},xshift=0pt,yshift=0pt] (-0.6,3) -- (-0.6, 6) node [black,midway,xshift=-19pt,yshift=0pt] {\footnotesize $\lceil \frac{n}{2} \rceil$};

\end{tikzpicture}
\caption{Pattern for $n$ odd, shown here on a $P_{7} \Box P_{10}$ graph}
\label{odd n array}
\end{figure}
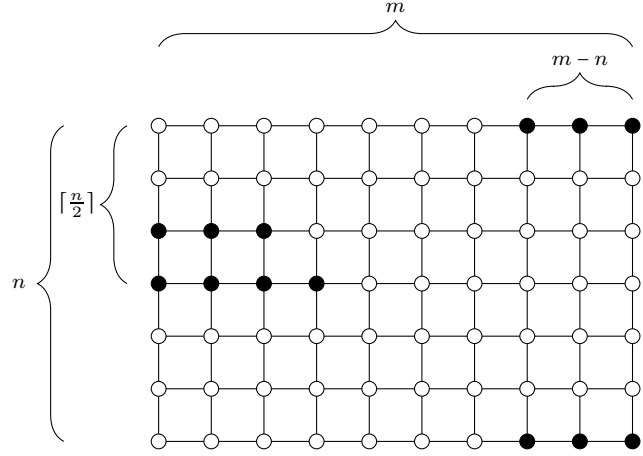
\end{center}

\emph{Case 2 - $n$ is even.} If $n$ is even, we must color vertices as follows, an example of which is provided in Figure \ref{even n array}. We will use the same coordinate system as described in Definition \ref{coordinate def}. To begin, vertices at the coordinates
$$\left(\frac{n}{2}, 1\right), \left(\frac{n}{2}, 2\right), \left(\frac{n}{2}, 3\right), \ldots , \left(\frac{n}{2},\frac{n}{2}\right)$$ and $$\left(\frac{n}{2}+1, 1\right), \left(\frac{n}{2}+1, 2\right), \left(\frac{n}{2}+1, 3\right), \ldots , \left(\frac{n}{2}+1,\frac{n}{2}\right)$$ must be colored. There will be $n$ vertices colored in this portion of the coloring.

To finish the pattern, vertices at the coordinates
$$(n+1, 1), (n + 2,1), (n + 3, 1), \ldots ,(m,1)$$
must be colored, as well as those at the coordinates
$$(n+1, n), (n + 2,n), (n + 3, n), \ldots ,(m,n).$$

Once all vertices in this pattern are colored, there will be $2m-n$ initially colored vertices.

\begin{center}
\begin{figure}[h]
\centering
\begin{tikzpicture}[scale=0.7]

\draw[step=1cm,black,thin] (0,1) grid (9,6);

\draw [decorate,decoration={brace,amplitude=10pt},xshift=0pt,yshift=0pt] (0,7.5) -- (9,7.5) node [black,midway,xshift=0pt,yshift=15pt] {\footnotesize $m$};

\draw [decorate,decoration={brace,amplitude=10pt},xshift=0pt,yshift=0pt] (6,6.5) -- (9,6.5) node [black,midway,xshift=0pt,yshift=15pt] {\footnotesize $m-n$};

\draw[fill=white] (3,6) circle (4pt);
\draw[fill=white] (3,5) circle (4pt);
\draw[fill=white] (3,4) circle (4pt);
\draw[fill=white] (3,3) circle (4pt);
\draw[fill] (2,4) circle (4pt);
\draw[fill=white] (2,6) circle (4pt);
\draw[fill=white] (4,5) circle (4pt);

\draw[fill=white] (0,1) circle (4pt);
\draw[fill=white] (0,2) circle (4pt);
\draw[fill] (0,3) circle (4pt);
\draw[fill] (0,4) circle (4pt);
\draw[fill=white] (0,5) circle (4pt);
\draw[fill=white] (0,6) circle (4pt);

\draw[fill=white] (1,1) circle (4pt);
\draw[fill=white] (1,2) circle (4pt);
\draw[fill] (1,3) circle (4pt);
\draw[fill] (1,4) circle (4pt);
\draw[fill=white] (1,5) circle (4pt);
\draw[fill=white] (1,6) circle (4pt);

\draw[fill=white] (5,1) circle (4pt);
\draw[fill=white] (5,2) circle (4pt);
\draw[fill=white] (5,3) circle (4pt);
\draw[fill=white] (5,4) circle (4pt);
\draw[fill=white] (5,5) circle (4pt);
\draw[fill=white] (5,6) circle (4pt);

\draw[fill] (6,1) circle (4pt);
\draw[fill=white] (6,2) circle (4pt);
\draw[fill=white] (6,3) circle (4pt);
\draw[fill=white] (6,4) circle (4pt);
\draw[fill=white] (6,5) circle (4pt);
\draw[fill] (6,6) circle (4pt);

\draw[fill] (7,1) circle (4pt);
\draw[fill=white] (7,2) circle (4pt);
\draw[fill=white] (7,3) circle (4pt);
\draw[fill=white] (7,4) circle (4pt);
\draw[fill=white] (7,5) circle (4pt);
\draw[fill] (7,6) circle (4pt);

\draw[fill] (8,1) circle (4pt);
\draw[fill=white] (8,2) circle (4pt);
\draw[fill=white] (8,3) circle (4pt);
\draw[fill=white] (8,4) circle (4pt);
\draw[fill=white] (8,5) circle (4pt);
\draw[fill] (8,6) circle (4pt);

\draw[fill] (9,1) circle (4pt);
\draw[fill=white] (9,2) circle (4pt);
\draw[fill=white] (9,3) circle (4pt);
\draw[fill=white] (9,4) circle (4pt);
\draw[fill=white] (9,5) circle (4pt);
\draw[fill] (9,6) circle (4pt);

\draw[fill=white] (2,5) circle (4pt);
\draw[fill] (2,3) circle (4pt);
\draw[fill=white] (2,2) circle (4pt);
\draw[fill=white] (2,1) circle (4pt);

\draw[fill=white] (3,2) circle (4pt);
\draw[fill=white] (3,1) circle (4pt);

\draw[fill=white] (4,6) circle (4pt);
\draw[fill=white] (4,4) circle (4pt);
\draw[fill=white] (4,3) circle (4pt);
\draw[fill=white] (4,2) circle (4pt);
\draw[fill=white] (4,1) circle (4pt);

\draw [decorate,decoration={brace,amplitude=10pt},xshift=0pt,yshift=0pt] (-1.8,1) -- (-1.8, 6) node [black,midway,xshift=-17pt,yshift=0pt] {\footnotesize $n$};

\draw [decorate,decoration={brace,amplitude=10pt},xshift=0pt,yshift=0pt] (-0.6,4) -- (-0.6, 6) node [black,midway,xshift=-19pt,yshift=0pt] {\footnotesize $\frac{n}{2}$};

\end{tikzpicture}
\caption{Pattern for $n$ even, shown here on a $P_{6} \Box P_{10}$ graph.}
\label{even n array}
\end{figure}
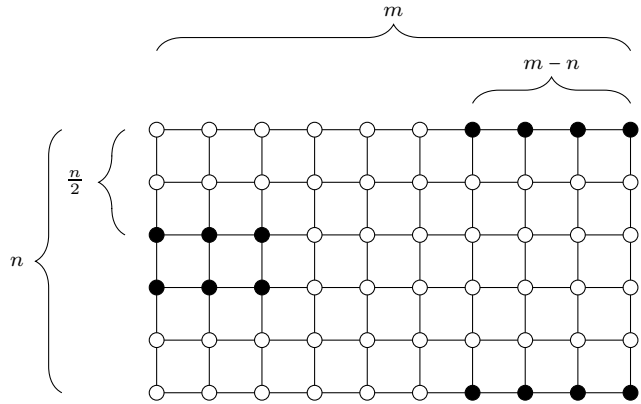
\end{center}

\end{definition}

We will find the following lemma useful throughout this section.

\begin{lemma} \label{lem.sides}
For the grid $P_n \Box P_m$, the set of vertices along any two sides is a 1-forcing set.
\end{lemma}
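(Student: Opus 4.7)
The plan is to split on whether the two chosen sides are opposite or adjacent; by the symmetries of $P_n \Box P_m$, every other choice of two sides is equivalent to one of these. I will treat the opposite case using the top and bottom rows, and the adjacent case using the top row $R_T$ together with the left column $C_L$, fixing throughout an arbitrary leak location $(r,c)$. The two key tools are that a single side of the grid is already a zero-forcing set (Proposition~\ref{zf_array prop}) and the monotonicity of the color-change rule: coloring more vertices initially can only yield a larger eventual colored set.

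For two opposite sides, the top row launches a row-by-row downward wave, since once row $i$ is fully colored each $(i,j)$ has only $(i+1,j)$ uncolored among its neighbors. Because the leak sits in row $r$, it never appears among the forcing vertices into rows $1,\dots,r$, so the top wave colors rows $1$ through $r$ completely. Symmetrically the bottom row wave colors rows $r$ through $n$ completely. Their union already covers the grid, after which all four grid-neighbors of $(r,c)$ are colored and $(r,c)$ forces its leak pendant. The edge cases $r=1$ or $r=n$ reduce immediately: when the leak lies on a colored side, the opposite wave still colors the single grid-neighbor of $(r,c)$ that $(r,c)$ itself cannot force, which is all that is needed before the pendant is forced.

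For two adjacent sides, I will run the downward top wave and the rightward left wave separately and combine them by monotonicity. By a straightforward induction on the step number, the top wave alone leaves uncolored precisely the expanding downward cone $\{(s,k): s>r,\ |k-c|<s-r\}$ (at each successive row below $r$ the shadow widens by one column on each side of $c$), while the left wave alone leaves uncolored the mirrored rightward cone $\{(s,k): k>c,\ |s-r|<k-c\}$. These two cones can only overlap in the quadrant $s>r,\ k>c$, where the defining inequalities collapse to the contradictory pair $k-c<s-r$ and $s-r<k-c$. Hence the two shadows are disjoint, so by monotonicity the two waves run simultaneously color every vertex of the grid, and then $(r,c)$ forces its pendant.

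The main technical obstacle is the inductive description of the shadow shape in the adjacent case: once one verifies that each propagation step widens the uncolored region by exactly one column on each flank, the disjointness reduces to a one-line inequality check. Leaks located on a colored side itself, or at the shared corner of two adjacent sides, are handled by the same framework, but with at least one of the two shadows being empty or strictly smaller, so the other wave easily covers whatever the first misses.
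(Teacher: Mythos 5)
Your proof is correct, and for the adjacent-sides case it takes a genuinely different route from the paper's. For opposite sides the two arguments amount to the same thing: the paper invokes the construction in Proposition~\ref{boxprodleaks} (color both endpoints of one path factor in every copy), while you run the two opposing row-waves explicitly and observe that they cover rows $1,\dots,r$ and $r,\dots,n$ respectively. For adjacent sides, however, the paper runs the combined forcing process from both sides at once and asserts that once the leak's row or column is fully colored every grid-neighbor of the leak is colored, after which forcing continues in one direction; this is terse and leaves the reader to verify how the two fronts interact. You instead analyze each side's wave in isolation, identify its residual uncolored set as an explicit expanding cone emanating from the leak, check that the two cones are disjoint by a one-line inequality, and conclude via monotonicity of the closure, $cl(A\cup B)\supseteq cl(A)\cup cl(B)$ --- a fact the paper itself uses in the proof of Proposition~\ref{prop.fort}. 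Your decomposition costs more up front (the inductive cone description must be verified, including that no colored vertex on the cone's boundary ever has a unique uncolored neighbor), but it avoids reasoning about the interaction of the two fronts entirely and makes explicit exactly which vertices a single side fails to color in the presence of a leak, information the paper's proof never surfaces. Your handling of the degenerate leak positions (on a colored side or at the shared corner) by noting that the corresponding shadow shrinks or vanishes is also sound.
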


\begin{proof}
If the sides are nonconsecutive, then the proof follows from the proof of Proposition \ref{boxprodleaks}. If the sides are consecutive, use the vertices on the row and column until the row or column containing the leak is colored (if the leak is a colored vertex, that side (or sides) will not force). Observe that necessarily, all vertices adjacent to the leak are now colored. Hence, we can continue forcing in any one direction to color all of the vertices.
\end{proof}

\begin{proposition} \label{array pattern prop}
The initial coloring described in Definition \ref{array pattern} forms a $1$-forcing set for the graph $P_{n} \Box P_{m}$.
\end{proposition}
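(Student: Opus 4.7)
The plan is to perform a case analysis on the location of the single leak, exploiting two complementary redundancies in the pattern of Definition~\ref{array pattern}: the two parallel rows of the left ``plus'' block allow forcing to escape along column~$1$ even if one of the two rows is impaired, and the two parallel ``arms'' along rows~$1$ and~$n$ in the right portion allow horizontal forcing even if one arm is impaired. As a baseline, I would first verify the no-leak case: the two rows of the plus force outward along column~$1$ (one upward, one downward), completely coloring it; a standard left-to-right sweep then colors columns $2, 3, \ldots, n$ and continues past column~$n$ into the right portion, absorbing the arms.

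With the baseline in hand I would split the leak's location into four sub-cases and handle the three easy ones by routing forcing around the leak. (1)~If the leak is on an uncolored vertex in the left $n \times n$ region off the plus, the plus is undisturbed and the canonical sequence colors the left sub-grid; the leak gets colored once enough of its neighbors are forced. (2)~If the leak is on an uncolored vertex in the right region off an arm, the plus still completes the left sub-grid so that column~$n$ is fully colored, which together with the two intact arms gives three fully colored sides of the right sub-grid $P_n \Box P_{m-n+1}$; by Lemma~\ref{lem.sides} (applied to any two of those sides) the right sub-grid is then $1$-forced. (3)~If the leak is on an arm vertex, only one arm is impaired, but the other arm together with column~$n$ (again forced by the undisturbed plus) supplies two adjacent colored sides of the right sub-grid, and Lemma~\ref{lem.sides} again applies.

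The main obstacle is case (4): the leak lies on a colored vertex of the plus block. Only the unaffected row of the plus can initially force; by symmetry, suppose the top row of the plus is impaired. The intact bottom row pushes forcing down column~$1$ and then rightward through rows $\lceil n/2\rceil + 1, \ldots, n$ of the left sub-grid, eventually reaching the bottom arm so that row~$n$ becomes fully colored from column~$1$ all the way to column~$m$. Forcing then propagates upward inside the right portion and reaches the top arm; at that stage the right sub-grid has column~$n$ and both arms colored, and forcing sweeps leftward from the top arm and column~$m$ back into the top-left region, meeting the intact bottom row of the plus near column~$\lceil n/2\rceil$ and eventually coloring every remaining vertex, including the leak itself. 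The delicate step is verifying that this ``back-and-forth'' route does not stall, particularly near column~$\lceil n/2\rceil$ where the leak's blockade is geometrically tightest; I would do this by tracing the sequence step by step, using the symmetry between the two rows of the plus (and between the two arms) to reduce the case to a single representative leak position, with only minor parity adjustments needed to cover the odd-$n$ variant where the middle vertex of the plus plays a neutral role.
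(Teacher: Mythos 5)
Your high-level plan is the same one the paper uses: split into cases according to where the single leak falls, exploit the redundancy built into the pattern (the two parallel rows of the plus, the two arms), and close with Lemma~\ref{lem.sides}. You organize the cases by which piece of the pattern the leak touches rather than by the paper's four-quadrant cut, but the spirit is identical. Your cases (2) and (3) are sound (modulo the routine remark that Lemma~\ref{lem.sides} may be applied to the right sub-block because its interior vertices have no neighbors outside it and the attaching column is fully colored). However, case (1) rests on a false claim, and case (4), which you rightly identify as the crux, is left as an unexecuted plan (``I would do this by tracing the sequence step by step''), so the argument is not complete.

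The problem in case (1) is conceptual. A leak on a vertex $v$ does not merely postpone the coloring of $v$ until ``enough of its neighbors are forced''; it permanently disables $v$ as a forcer, since $v$'s pendant leak neighbor can only ever be colored by $v$ itself. In the canonical sweep of the left sub-grid almost every vertex must eventually perform a force (each vertex forces at most once, and $nm-(2m-n)$ forces are required), so a leak on an \emph{uncolored} vertex $(i_0,j_0)$ off the plus very much does disturb the sequence: $(i_0,j_0)$ becomes colored but never forces, each of its neighbors is then left with two uncolored neighbors, and an uncolored wedge opens and widens as the wave advances. For example, with the leak at $(1,2)$ the sweep stalls on roughly the region $\{(i,j): j\ge i+2\}$ in the upper rows, and this region is only filled in after the forcing wraps around through the arms and the right block (or around the right boundary when $m=n$) and sweeps back leftward. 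So the left sub-grid is \emph{not} colored by the canonical sequence in case (1); it needs the same global ``go around and come back'' routing that you reserve for case (4) --- which is exactly what the paper's quadrant argument supplies, by coloring the half opposite the leak first and returning through the neighboring quadrants. Relatedly, your case (4) premise that ``only the unaffected row of the plus can initially force'' overstates the damage (the affected row loses only its one dead vertex), and the non-stalling verification you defer there is precisely where the content of the proposition lies.
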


\begin{proof}
To begin, we will first visualize the $P_{n} \Box P_{m}$ grid as split into four quadrants, an example of which is given in Figure \ref{subregions array}. A vertical cut is pictured halfway down the side of length $m$ and a horizontal cut is similarly pictured halfway down the side of length $n$. We will then have the following cases:

\emph{Case 1 - The leak is in quadrant $II$ or $III$.} In this case, either quadrant $II$ or $III$, whichever does not have the leak, will be able to be colored entirely by the initially colored vertices in the quadrant.

Once either quadrant $II$ or $III$ is colored, those colored vertices will be able to force until they reach the colored vertices in either quadrant $I$ or $IV$, respectively. At this point, either quadrant $I$ or $IV$ is able to force the right-hand side of the graph.

When all that is left to be colored is the quadrant with the leak, this can be done easily with the combined coloring efforts of both the originally colored vertices in that quadrant, and the colored vertices from either quadrant $I$ or $IV$.

\emph{Case 2 - The leak is in quadrant $I$ or $IV$.} In this case, the initially colored vertices in quadrants $II$ and $III$ can color the left-hand side of the graph and continue forcing until either the leak is reached or the colored vertices in either quadrant $I$ or $IV$ are reached. At this point, the quadrant without the leak can be colored entirely, and the quadrant with the leak will be forced by its two neighboring quadrants being completely colored.
\end{proof}

\begin{center}
\begin{figure}[!h]
\begin{tikzpicture}[scale=0.7]

\draw[step=1cm,black,thin] (0,0) grid (9,5);

\draw[dashed,thick] (-1,2.5)--(10,2.5);
\draw[dashed,thick] (4.5,-1)--(4.5,6);

\draw[fill=white] (0,0) circle (4pt);
\draw[fill=white] (0,1) circle (4pt);
\draw[fill=white] (0,2) circle (4pt);
\draw[fill=white] (0,3) circle (4pt);
\draw[fill=white] (0,4) circle (4pt);
\draw[fill=white] (0,5) circle (4pt);

\draw[fill=white] (1,0) circle (4pt);
\draw[fill=white] (1,1) circle (4pt);
\draw[fill=white] (1,2) circle (4pt);
\draw[fill=white] (1,3) circle (4pt);
\draw[fill=white] (1,4) circle (4pt);
\draw[fill=white] (1,5) circle (4pt);

\draw[fill=white] (2,0) circle (4pt);
\draw[fill=white] (2,1) circle (4pt);
\draw[fill=white] (2,2) circle (4pt);
\draw[fill=white] (2,3) circle (4pt);
\draw[fill=white] (2,4) circle (4pt);
\draw[fill=white] (2,5) circle (4pt);

\draw[fill=white] (3,0) circle (4pt);
\draw[fill=white] (3,1) circle (4pt);
\draw[fill=white] (3,2) circle (4pt);
\draw[fill=white] (3,3) circle (4pt);
\draw[fill=white] (3,4) circle (4pt);
\draw[fill=white] (3,5) circle (4pt);

\draw[fill=white] (4,0) circle (4pt);
\draw[fill=white] (4,1) circle (4pt);
\draw[fill=white] (4,2) circle (4pt);
\draw[fill=white] (4,3) circle (4pt);
\draw[fill=white] (4,4) circle (4pt);
\draw[fill=white] (4,5) circle (4pt);

\draw[fill=white] (5,0) circle (4pt);
\draw[fill=white] (5,1) circle (4pt);
\draw[fill=white] (5,2) circle (4pt);
\draw[fill=white] (5,3) circle (4pt);
\draw[fill=white] (5,4) circle (4pt);
\draw[fill=white] (5,5) circle (4pt);

\draw[fill=white] (6,0) circle (4pt);
\draw[fill=white] (6,1) circle (4pt);
\draw[fill=white] (6,2) circle (4pt);
\draw[fill=white] (6,3) circle (4pt);
\draw[fill=white] (6,4) circle (4pt);
\draw[fill=white] (6,5) circle (4pt);

\draw[fill=white] (7,0) circle (4pt);
\draw[fill=white] (7,1) circle (4pt);
\draw[fill=white] (7,2) circle (4pt);
\draw[fill=white] (7,3) circle (4pt);
\draw[fill=white] (7,4) circle (4pt);
\draw[fill=white] (7,5) circle (4pt);

\draw[fill=white] (8,0) circle (4pt);
\draw[fill=white] (8,1) circle (4pt);
\draw[fill=white] (8,2) circle (4pt);
\draw[fill=white] (8,3) circle (4pt);
\draw[fill=white] (8,4) circle (4pt);
\draw[fill=white] (8,5) circle (4pt);

\draw[fill=white] (9,0) circle (4pt);
\draw[fill=white] (9,1) circle (4pt);
\draw[fill=white] (9,2) circle (4pt);
\draw[fill=white] (9,3) circle (4pt);
\draw[fill=white] (9,4) circle (4pt);
\draw[fill=white] (9,5) circle (4pt);

\draw (2.25,1.25) ++(0:0cm) node [fill=white,inner sep=4pt](a){$III$};
\draw (2.25,3.75) ++(0:0cm) node [fill=white,inner sep=4pt](a){$II$};
\draw (6.75,1.25) ++(0:0cm) node [fill=white,inner sep=4pt](a){$IV$};
\draw (6.75,3.75) ++(0:0cm) node [fill=white,inner sep=4pt](a){$I$};

\end{tikzpicture}
\caption{Quadrants visualized, as discussed in Proposition \ref{array pattern prop}.}
\label{subregions array}
\end{figure}
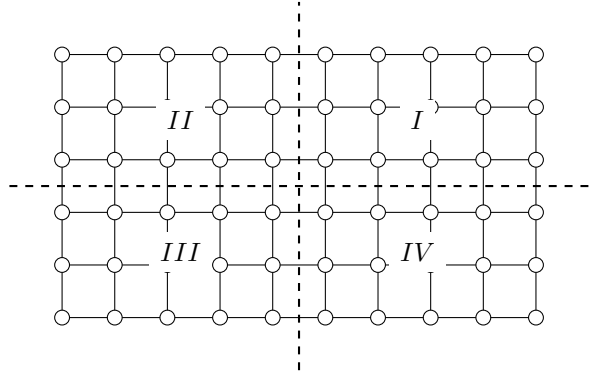
\end{center}

This also proves Theorem \ref{thm.uppergrid}. \hfill \qed

\begin{corollary}
\[Z_{(1)}(P_n \Box P_n) = n. \]
\end{corollary}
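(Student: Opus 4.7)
The plan is to prove this as a two-line corollary by sandwiching $Z_{(1)}(P_n \Box P_n)$ between matching upper and lower bounds that are already available in the excerpt.

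For the upper bound, I would directly specialize Theorem \ref{thm.uppergrid} to the square case $m = n$. Since the theorem asserts $Z_{(1)}(P_n \Box P_m) \leq 2m - n$ whenever $m \geq n$, setting $m = n$ yields $Z_{(1)}(P_n \Box P_n) \leq 2n - n = n$. It is worth noting here that in the square case the two columns of vertices in the pattern from Definition \ref{array pattern} along coordinates $(n+1,1),\ldots,(m,1)$ and $(n+1,n),\ldots,(m,n)$ are vacuous, so the coloring collapses to the two (or occasionally three) adjacent rows described in Case 1 or Case 2, which still forms a valid $1$-forcing set by Proposition \ref{array pattern prop}.

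For the lower bound, I would chain together Propositions \ref{zf_chain} and \ref{zf_array prop}. The monotonicity statement $Z_{(0)}(G) \leq Z_{(1)}(G)$ from Proposition \ref{zf_chain} combined with the known zero forcing number $Z_{(0)}(P_n \Box P_n) = n$ from Proposition \ref{zf_array prop} gives $n \leq Z_{(1)}(P_n \Box P_n)$.

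There is no real obstacle here — both bounds are essentially immediate from previously established results, and together they give $Z_{(1)}(P_n \Box P_n) = n$. The only thing to double-check is that Theorem \ref{thm.uppergrid} is stated for $m \geq n$ (it is, inclusively), so $m = n$ is in scope and no separate argument for the square case is required.
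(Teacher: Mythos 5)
Your proof is correct and matches the paper's argument essentially verbatim: the lower bound via Propositions \ref{zf_chain} and \ref{zf_array prop}, and the upper bound $2n-n=n$ from the $2m-n$ grid bound specialized to $m=n$ (the paper's proof cites Proposition \ref{boxprodleaks} for this step, but the arithmetic shows it intends Theorem \ref{thm.uppergrid}, exactly as you use it). Your added remark that the right-hand columns of the pattern become vacuous when $m=n$ is consistent with the explicit provision for that case in Definition \ref{array pattern}.
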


\begin{proof}
By Proposition \ref{boxprodleaks}, $Z_{(1)}(P_n \Box P_n) \le 2n-n \le n$. A lower bound is achieved by Propositions \ref{zf_chain} and \ref{zf_array prop}: $Z_{(1)}(P_n \Box P_n) \ge Z_{(0)}(P_n \Box P_n) = n.$
\end{proof}

It is interesting to note that for the square grid (for $n > 2$), $n$ vertices on any one side of the grid is sufficient for zero forcing, but not sufficient for 1-forcing.

\begin{definition} \label{improved bar grid}
(Initial pattern for Theorem \ref{thm.bargrid}). 

In Theorem \ref{thm.bargrid}, we introduce an improved bound on the 1-forcing number of grid graphs for certain values of $m$ and $n$. The initially colored vertices which give us this improved bound are given by the coordinates (using the coordinate system as in Definition \ref{coordinate def}):

For $m$ even:

$$\left(1, \frac{m}{2}\right) \text{ and } \left(1, \frac{m}{2}+1\right)$$

as well as

$$ (2,2), (2,3), (2,4), \dots , (2,m-1). $$

For $m$ odd:

$$ \left(1, \Bigl \lceil \frac{m}{2} \Bigr \rceil \right), \text{ and either } \left(1, \Bigl \lceil \frac{m}{2} \Bigr \rceil +1 \right) \text{ or } \left(1, \Bigl \lceil \frac{m}{2} \Bigr \rceil -1 \right)$$

as well as

$$ (2,2), (2,3), (2,4), \dots , (2,m-1). $$

There will be $m$ initially colored vertices after this coloring pattern has been completed. An example of this pattern is pictured in Figure \ref{bar grid}.

\end{definition}

\begin{theorem} \label{thm.bargrid}
For the Box product of paths, $P_n \Box P_m$, with $\bigl\lfloor \frac{m}{2} \bigr\rfloor + 2 \ge n$ \[Z_{(1)}(P_n \Box P_m) \leq m. \]
\end{theorem}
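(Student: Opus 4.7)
The plan is to prove that the set $C$ of $m$ vertices described in Definition \ref{improved bar grid} is a 1-forcing set for $P_n \Box P_m$ whenever $\lfloor m/2 \rfloor + 2 \ge n$, by direct case analysis on the leak's placement. Write the two row-1 initial vertices as $(1,a)$ and $(1,a+1)$ with $a \in \{\lfloor m/2 \rfloor, \lceil m/2 \rceil\}$, so that row 2 is colored exactly in columns $2, \dots, m-1$.

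First, I verify the leak-free forcing sequence as a baseline: $(1,a)$ and $(1,a+1)$ cascade outward through row 1; then $(1,1)$ forces $(2,1)$ and $(1,m)$ forces $(2,m)$, completing row 2; and then each row $i \ge 2$ forces row $i+1$ column-by-column, since each interior $(i,j)$ has three colored neighbors and only $(i+1,j)$ uncolored. When the single leak lies on any vertex outside $\{(1,a),(1,a+1)\}$, the same cascade still succeeds with at most one local patch: the leaked vertex $v$ gets colored by its neighbor as usual, and the one vertex that $v$ would otherwise have forced is instead forced laterally in the same row, once the row is colored to either side of $v$.

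The substantive case is a leak on $(1,a)$ (the case of $(1,a+1)$ being symmetric). Now $(1,a)$ is permanently unable to force, but $(1,a+1)$ cascades rightward, coloring $(1,a+2), \dots, (1,m)$. The row-2 vertices $(2,j)$ with $j \ge a$ acquire three colored neighbors as the right half of row 1 fills in, so each forces $(3,j)$; meanwhile the row-2 vertices with $j < a$ cannot yet force because $(1,j)$ remains uncolored. After $(1,m) \to (2,m) \to (3,m)$, the chain proceeds down column $m$, and a staircase develops: once some row $i \ge 3$ is colored in columns $[c_i, m]$, the colored vertex $(i,c_i)$ together with its row-2 and row-$(i+1)$ colored neighbors enables forcing one additional column to the left in row $i+1$. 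Tracking this shows row $3+t$ is eventually colored for columns $[a+t, m]$, and the hypothesis $\lfloor m/2 \rfloor + 2 \ge n$ is precisely the statement that this staircase can reach row $n$ within the grid.

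Once row $n$ is reached on the right, the forces back-propagate: row $n$ fills leftward along the bottom boundary, each higher row fills from below and from the right, and finally the left halves of rows 2 and 1 are forced via the chain $(3,1) \to (2,1) \to (1,1) \to (1,2) \to \cdots$. The main obstacle is the combinatorial bookkeeping in the staircase case, where each force requires verifying the ``exactly one uncolored neighbor'' condition in the presence of the permanent leak at $(1,a)$; the hypothesis $\lfloor m/2 \rfloor + 2 \ge n$ is invoked exactly at the junction where the staircase must reach row $n$ before exhausting the horizontal room to the right of column $a$. The remaining cases (leak on an uncolored vertex, or on a row-2 interior vertex) are simpler variants where the leak is absorbed locally without invoking the staircase argument.
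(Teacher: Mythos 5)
There is a genuine gap, and it sits in the cases you dismiss. You treat a leak on $(1,a)$ as the only substantive case and claim that a leak anywhere else is absorbed by ``at most one local patch,'' with the blocked vertex ``forced laterally in the same row.'' That lateral force is not available when it is needed. Take a leak on an initially colored row-2 vertex $(2,j)$ with $j$ near the middle: rows 1 and 2 still become colored, but $(2,j)$ can never force $(3,j)$, and neither $(3,j-1)$ nor $(3,j+1)$ can force it laterally, because each of them still has its own uncolored neighbor $(4,j-1)$, resp.\ $(4,j+1)$, directly below. So row 3 is missing column $j$, row 4 is then missing columns $j-1,j,j+1$, and the uncolored gap widens by one column on each side per row. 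The process only recovers after the colored region reaches row $n$ (where there is no row below) and wraps back around, and the colored region reaches row $n$ at all only if $n-3 \le \max(j-2,\, m-j-1)$; minimizing over $j$ gives exactly $n \le \lfloor m/2 \rfloor + 2$. In other words, the hypothesis of the theorem is tight precisely in a case you label a ``simpler variant,'' while the case you single out as substantive (leak on $(1,a)$) only requires $n \le \lceil m/2 \rceil + 3$ and has slack. The same widening-gap phenomenon occurs for a leak on any uncolored vertex in rows $3$ through $n$, so the case analysis as structured cannot be completed as written.

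A secondary issue is that even in the case you do analyze, the wrap-around along the bottom row and the back-propagation to the left halves of rows 1 and 2 are asserted rather than proved (and the phrase ``one additional column to the left'' contradicts your own formula $[a+t,m]$, which correctly describes a front receding to the right). The paper sidesteps all of this bookkeeping by first establishing Lemma \ref{lem.sides} --- the vertices along any two sides of the grid form a 1-forcing set --- and then arguing in each case only that two consecutive sides of the grid eventually become colored, after which monotonicity finishes the proof. Any repair of your argument will want such a lemma, since the wrap-around step recurs in every leak position, not just the one you analyzed.
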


\begin{proof}

Just as in Proposition \ref{array pattern prop}, we will first visualize the $P_{n} \Box P_{m}$ grid as split into four regions. However, this will not be an even split as before; an example of these new regions is given in Figure \ref{bar grid figure}. A vertical cut is still pictured halfway down the side of length $m$, and the new horizontal cut is pictured after the second vertex from the top down the side of length $n$. We will then have the following cases, using the initially colored vertices as found in Definition \ref{improved bar grid}:

\emph{Case 1 - The leak is in region $I$ or $II$.} In this case, either region $I$ or $II$, whichever does not have the leak, will be able to be colored entirely by the initially colored vertices in the region.

Once either region $I$ or $II$ is colored, those colored vertices will be able to force until they reach the ``bottom corner," the vertex in the lower right- or left-hand side in either region $IV$ or $III$, respectively. At this point, the entire graph can be colored by Lemma \ref{lem.sides}.

\emph{Case 2 - The leak is in region $III$ or $IV$.} In this case, the initially colored vertices in regions $I$ and $II$ can be entirely colored, and they can begin forcing into regions $IV$ and $III$, respectively, until the leak is reached. The condition $\bigl\lfloor \frac{m}{2} \bigr\rfloor + 2 \ge n$ guarantees that regardless of the location of the leak, one of the ``bottom corners'' will be forced, at which point two sides are colored and Lemma \ref{lem.sides} guarantees the entire graph can be colored.

\end{proof}

This proof is limited to those grids with $\bigl\lfloor \frac{m}{2} \bigr\rfloor + 2 \ge n$ because if $n$ were to be any larger, it could not be guaranteed that a ``bottom corner" in either region $III$ or $IV$ could be reached by the initially colored vertices. We have found that this condition is necessary for this pattern in order to ensure the entire graph may be colored.

\begin{center}
\begin{figure}[!h]
\begin{tikzpicture}[scale=0.7]

\draw[step=1cm,black,thin] (0,0) grid (9,5);

\draw[dashed,thick] (-1,3.5)--(10,3.5);
\draw[dashed,thick] (4.5,-1)--(4.5,6);

\draw[fill=white] (0,0) circle (4pt);
\draw[fill=white] (0,1) circle (4pt);
\draw[fill=white] (0,2) circle (4pt);
\draw[fill=white] (0,3) circle (4pt);
\draw[fill=white] (0,4) circle (4pt);
\draw[fill=white] (0,5) circle (4pt);

\draw[fill=white] (1,0) circle (4pt);
\draw[fill=white] (1,1) circle (4pt);
\draw[fill=white] (1,2) circle (4pt);
\draw[fill=white] (1,3) circle (4pt);
\draw[fill=white] (1,4) circle (4pt);
\draw[fill=white] (1,5) circle (4pt);

\draw[fill=white] (2,0) circle (4pt);
\draw[fill=white] (2,1) circle (4pt);
\draw[fill=white] (2,2) circle (4pt);
\draw[fill=white] (2,3) circle (4pt);
\draw[fill=white] (2,4) circle (4pt);
\draw[fill=white] (2,5) circle (4pt);

\draw[fill=white] (3,0) circle (4pt);
\draw[fill=white] (3,1) circle (4pt);
\draw[fill=white] (3,2) circle (4pt);
\draw[fill=white] (3,3) circle (4pt);
\draw[fill=white] (3,4) circle (4pt);
\draw[fill=white] (3,5) circle (4pt);

\draw[fill=white] (4,0) circle (4pt);
\draw[fill=white] (4,1) circle (4pt);
\draw[fill=white] (4,2) circle (4pt);
\draw[fill=white] (4,3) circle (4pt);
\draw[fill=white] (4,4) circle (4pt);
\draw[fill=white] (4,5) circle (4pt);

\draw[fill=white] (5,0) circle (4pt);
\draw[fill=white] (5,1) circle (4pt);
\draw[fill=white] (5,2) circle (4pt);
\draw[fill=white] (5,3) circle (4pt);
\draw[fill=white] (5,4) circle (4pt);
\draw[fill=white] (5,5) circle (4pt);

\draw[fill=white] (6,0) circle (4pt);
\draw[fill=white] (6,1) circle (4pt);
\draw[fill=white] (6,2) circle (4pt);
\draw[fill=white] (6,3) circle (4pt);
\draw[fill=white] (6,4) circle (4pt);
\draw[fill=white] (6,5) circle (4pt);

\draw[fill=white] (7,0) circle (4pt);
\draw[fill=white] (7,1) circle (4pt);
\draw[fill=white] (7,2) circle (4pt);
\draw[fill=white] (7,3) circle (4pt);
\draw[fill=white] (7,4) circle (4pt);
\draw[fill=white] (7,5) circle (4pt);

\draw[fill=white] (8,0) circle (4pt);
\draw[fill=white] (8,1) circle (4pt);
\draw[fill=white] (8,2) circle (4pt);
\draw[fill=white] (8,3) circle (4pt);
\draw[fill=white] (8,4) circle (4pt);
\draw[fill=white] (8,5) circle (4pt);

\draw[fill=white] (9,0) circle (4pt);
\draw[fill=white] (9,1) circle (4pt);
\draw[fill=white] (9,2) circle (4pt);
\draw[fill=white] (9,3) circle (4pt);
\draw[fill=white] (9,4) circle (4pt);
\draw[fill=white] (9,5) circle (4pt);

\draw (0.5,2.5) ++(0:0cm) node [fill=white,inner sep=2pt](a){$III$};
\draw (0.5,4.5) ++(0:0cm) node [fill=white,inner sep=4pt](a){$II$};
\draw (5.5,4.5) ++(0:0cm) node [fill=white,inner sep=4pt](a){$I$};
\draw (5.5,2.5) ++(0:0cm) node [fill=white,inner sep=3pt](a){$IV$};

\end{tikzpicture}
\caption{Visualization of the regions used in the proof of Theorem \ref{thm.bargrid}.}
\label{bar grid figure}
\end{figure}
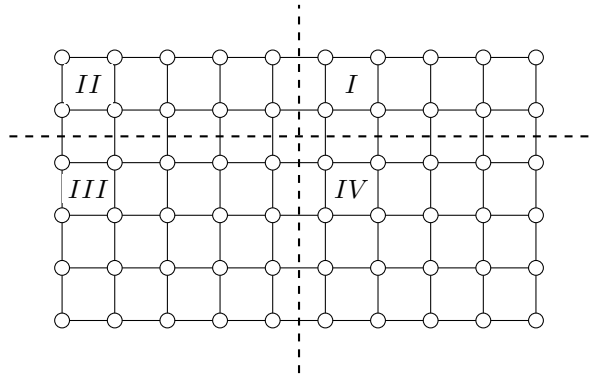
\end{center}

\begin{center}
\begin{figure}[h]
\centering
\begin{tikzpicture}[scale=0.7]

\draw[step=1cm,black,thin] (0,0) grid (9,6);

\draw [decorate,decoration={brace,amplitude=10pt},xshift=0pt,yshift=0pt] (0,7.5) -- (9,7.5) node [black,midway,xshift=0pt,yshift=15pt] {\footnotesize $m$};

\draw [decorate,decoration={brace,amplitude=10pt},xshift=0pt,yshift=0pt] (0,6.5) -- (4.5,6.5) node [black,midway,xshift=0pt,yshift=15pt] {\footnotesize $\sfrac{m}{2}$};

\draw[fill=white] (3,6) circle (4pt);
\draw[fill] (3,5) circle (4pt);
\draw[fill=white] (3,4) circle (4pt);
\draw[fill=white] (3,3) circle (4pt);
\draw[fill=white] (2,4) circle (4pt);
\draw[fill=white] (2,6) circle (4pt);
\draw[fill] (4,5) circle (4pt);

\draw[fill=white] (0,1) circle (4pt);
\draw[fill=white] (0,2) circle (4pt);
\draw[fill=white] (0,3) circle (4pt);
\draw[fill=white] (0,4) circle (4pt);
\draw[fill=white] (0,5) circle (4pt);
\draw[fill=white] (0,6) circle (4pt);

\draw[fill=white] (1,1) circle (4pt);
\draw[fill=white] (1,2) circle (4pt);
\draw[fill=white] (1,3) circle (4pt);
\draw[fill=white] (1,4) circle (4pt);
\draw[fill] (1,5) circle (4pt);
\draw[fill=white] (1,6) circle (4pt);

\draw[fill=white] (5,1) circle (4pt);
\draw[fill=white] (5,2) circle (4pt);
\draw[fill=white] (5,3) circle (4pt);
\draw[fill=white] (5,4) circle (4pt);
\draw[fill] (5,5) circle (4pt);
\draw[fill] (5,6) circle (4pt);

\draw[fill=white] (6,1) circle (4pt);
\draw[fill=white] (6,2) circle (4pt);
\draw[fill=white] (6,3) circle (4pt);
\draw[fill=white] (6,4) circle (4pt);
\draw[fill] (6,5) circle (4pt);
\draw[fill=white] (6,6) circle (4pt);

\draw[fill=white] (7,1) circle (4pt);
\draw[fill=white] (7,2) circle (4pt);
\draw[fill=white] (7,3) circle (4pt);
\draw[fill=white] (7,4) circle (4pt);
\draw[fill] (7,5) circle (4pt);
\draw[fill=white] (7,6) circle (4pt);

\draw[fill=white] (8,1) circle (4pt);
\draw[fill=white] (8,2) circle (4pt);
\draw[fill=white] (8,3) circle (4pt);
\draw[fill=white] (8,4) circle (4pt);
\draw[fill] (8,5) circle (4pt);
\draw[fill=white] (8,6) circle (4pt);

\draw[fill=white] (9,1) circle (4pt);
\draw[fill=white] (9,2) circle (4pt);
\draw[fill=white] (9,3) circle (4pt);
\draw[fill=white] (9,4) circle (4pt);
\draw[fill=white] (9,5) circle (4pt);
\draw[fill=white] (9,6) circle (4pt);

\draw[fill=white] (0,0) circle (4pt);
\draw[fill=white] (1,0) circle (4pt);
\draw[fill=white] (2,0) circle (4pt);
\draw[fill=white] (3,0) circle (4pt);
\draw[fill=white] (4,0) circle (4pt);
\draw[fill=white] (5,0) circle (4pt);
\draw[fill=white] (6,0) circle (4pt);
\draw[fill=white] (7,0) circle (4pt);
\draw[fill=white] (8,0) circle (4pt);
\draw[fill=white] (9,0) circle (4pt);

\draw[fill] (2,5) circle (4pt);
\draw[fill=white] (2,3) circle (4pt);
\draw[fill=white] (2,2) circle (4pt);
\draw[fill=white] (2,1) circle (4pt);

\draw[fill=white] (3,2) circle (4pt);
\draw[fill=white] (3,1) circle (4pt);

\draw[fill] (4,6) circle (4pt);
\draw[fill=white] (4,4) circle (4pt);
\draw[fill=white] (4,3) circle (4pt);
\draw[fill=white] (4,2) circle (4pt);
\draw[fill=white] (4,1) circle (4pt);

\draw [decorate,decoration={brace,amplitude=10pt},xshift=0pt,yshift=0pt] (-0.6,0) -- (-0.6, 6) node [black,midway,xshift=-17pt,yshift=0pt] {\footnotesize $n$};

\end{tikzpicture}
\caption{An example of the pattern in Definition \ref{improved bar grid} for Theorem \ref{thm.bargrid}, shown here on a $P_{7} \Box P_{10}$ graph.}
\label{bar grid}
\end{figure}
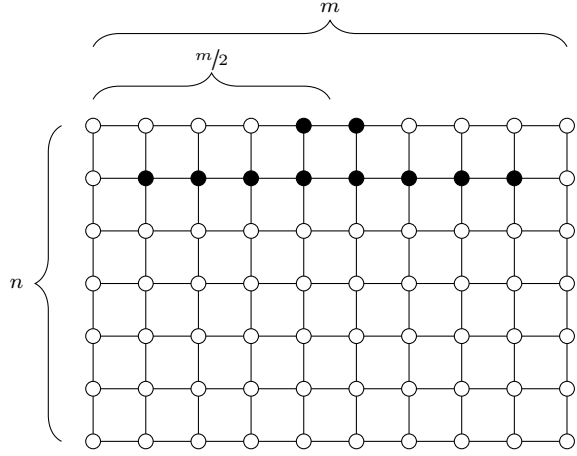
\end{center}

\begin{definition}\label{def.wing} (Initial pattern for Theorem \ref{thm.mn5}).

Let $L, R, H$ be determined by $m$ and $n$ as in Table \ref{even pattern values table} (if $m$ is even) or \ref{odd pattern values table} (if $m$ is odd). The set of initially colored vertices  includes $(i, 2)$ for $i = 1, \ldots, L$ (if $L=0$, color no vertices in this way), $(m-i, 2)$ for $i = 1, \ldots, R$; and both $\left(\left\lfloor \frac m 2 \right\rfloor, i\right)$ and $\left(\left\lfloor \frac m 2 \right\rfloor +1, i\right)$ for $i=1, \ldots , H$ (if $H=0$, color no vertices in this way). An illustration can be seen in Figure \ref{new array figure}.
\label{new array def}
\end{definition}

\begin{theorem}\label{thm.mn5}
For the Box product of paths, $P_n \Box P_m$, with $n \le m \le n+5$ and $m \ge 7$, \[Z_{(1)}(P_n \Box P_m) \leq m. \]

\end{theorem}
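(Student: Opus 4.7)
The plan is to mirror the approach used in Proposition \ref{array pattern prop}: take the initial coloring prescribed by Definition \ref{def.wing}, confirm that it uses exactly $m$ vertices, and then argue by a case analysis on the leak's location that the color-change rule eventually colors all of $P_n \Box P_m$.

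First I would verify, from Tables \ref{even pattern values table} and \ref{odd pattern values table}, that $L + R + 2H = m$ for every residue of $m$ modulo $2$ and every permissible offset $m - n \in \{0,1,2,3,4,5\}$; this reduces the theorem to showing that the prescribed set is a $1$-forcing set. Next I would split the grid into natural regions — the central two rows $\lfloor m/2 \rfloor$ and $\lfloor m/2 \rfloor + 1$ containing the wing of length $H$, and the two ``caps'' above and below the wing, each of which contains one of the vertical stems of length $L$ or $R$ running along column $2$. The cases then fall into two families: either the leak sits in a cap region (away from the wing), or it sits in (or is adjacent to) the wing itself.

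In the first case, the wing together with the cap opposite the leak propagates unimpeded from column $2$ outward, reaching the far side of the grid and then continuing around until two adjacent sides of the grid are colored; an application of Lemma \ref{lem.sides} then finishes the forcing. In the second case, the two caps force their way back toward the wing from both ends around the leak, producing enough colored vertices that two full sides of the grid become colored, after which Lemma \ref{lem.sides} again finishes. The key geometric condition the tables must enforce is that the wing be long enough so that, regardless of where the leak falls, the forcing fronts emerging from the stems and from the wing interlock before a leak can seal off a sub-block.

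The main obstacle is the combinatorial bookkeeping: each of the six offsets $m - n \in \{0,1,2,3,4,5\}$, combined with the two parities of $m$, yields a slightly different $(L,R,H)$ configuration, and in each sub-case one must trace the propagation explicitly to confirm that no placement of the single leak leaves an uncolorable region. The hypothesis $m \ge 7$ is presumably exactly what rules out the degenerate small cases where one of $L$, $R$, or $H$ would need to shrink to zero and break the argument; those borderline configurations are handled separately (for instance by Theorem \ref{thm.uppergrid} or by the computational methods of Section \ref{sec.alg}). The remaining work is a finite but tedious verification, which I would organize by drawing out the propagation diagram for each of the roughly twelve $(m \bmod 2,\, m - n)$ profiles and appealing to Lemma \ref{lem.sides} uniformly at the end of each case.
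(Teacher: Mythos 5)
Your plan is the paper's plan in outline: color the set of Definition \ref{def.wing}, check $L+R+2H=m$ from Tables \ref{even pattern values table} and \ref{odd pattern values table}, cut the grid into regions around the central double line and the two stems, do a case analysis on the leak's location, and close each case with Lemma \ref{lem.sides}. The problem is that everything you defer as ``a finite but tedious verification'' is where the actual content of the theorem lives, and you have not supplied the mechanism that makes the verification go through. The paper's argument is not a free propagation followed by bookkeeping: once a region opposite the leak is colored, the front advances diagonally, and because the leak may sit on a boundary vertex of the wing, \emph{each successive diagonal step is guaranteed one fewer force than the previous one}, producing a triangular colored region. The proof then reduces to two explicit inequalities --- that the triangle launched from the wing still reaches the far bottom corner (the computation $H+\lceil m/2\rceil+1\ge n$, checked case by case against the tables) and that the return sweep reaches $(1,n)$ (the computation $\lfloor m/2\rfloor+H-n-1\le 1$). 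These inequalities are exactly where the hypotheses $m\le n+5$ and the specific $(L,R,H)$ values are consumed; without stating and verifying them, your assertion that ``the forcing fronts interlock before a leak can seal off a sub-block'' is a restatement of the goal, not an argument. Your two-case split (leak in a cap versus leak in or near the wing) also doesn't isolate the case that actually requires this triangle analysis, namely a leak in one of the lower regions beyond depth $H$, which in your taxonomy is lumped into ``a cap.''

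Two smaller points. The tables do not contain a row for $m-n=0$, so if you intend to cover that offset you must route it through the square-grid corollary of Theorem \ref{thm.uppergrid} explicitly rather than through this pattern; you gesture at this but should commit to it. And your guess about the role of $m\ge 7$ is off: $L=0$ is explicitly permitted by Definition \ref{def.wing} (odd $m$ with $m-n\le 2$), so the hypothesis is not there to keep $L,R,H$ positive --- it is needed so that the corner-reaching inequalities above hold for every row of the tables. As written, the proposal is a correct roadmap to the paper's proof but omits the step that does the work.
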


\begin{proof}
Let $L, R, H$ be as in Tables \ref{even pattern values table} and \ref{odd pattern values table}.

Let $S$ be the initially colored set as described in Definition \ref{new array def}.

Just as before, we will first visualize the $P_{n} \Box P_{m}$ grid as split into four regions using two orthogonal cuts. Region $I$ is the set of vertices, $(x,y)$ for which $x \ge \left\lfloor \frac m 2 \right\rfloor +1$ and $y \le H$. Region $II$ is the set of vertices, $(x,y)$ for which $x \le \left\lfloor \frac m 2 \right\rfloor$ and $y \le H$. Region $III$ is the set of vertices, $(x,y)$ for which $x \le \left\lfloor \frac m 2 \right\rfloor$ and $y \ge H+1$. Finally, region $IV$ is the set of vertices, $(x,y)$ for which $x \ge \left\lfloor \frac m 2 \right\rfloor +1$ and $y \ge H+1$.

\emph{Case 1 - The leak is in region $II$.} In this case region $I$ will be able to be colored entirely as follows. The vertices $\left( \lfloor \frac{m}{2} \rfloor +1, i \right)$ will force $\left( \lfloor \frac{m}{2} \rfloor +2, i \right)$ for $i = 1, \ldots, H-1$. Inductively, the vertices $\left( \lfloor \frac{m}{2} \rfloor +k+1, i \right)$ will force $\left( \lfloor \frac{m}{2} \rfloor+k+2, i \right)$ for $i=i, \ldots, H-1-k$ and for $k=1, \ldots, H-2$. When $k=H-2$, we have that $\left( \lfloor \frac{m}{2} \rfloor + H-1, 1 \right)$ will force $\left( \lfloor \frac{m}{2} \rfloor+H, 1 \right)$. A simple inspection of Tables \ref{even pattern values table} and \ref{odd pattern values table} demonstrate that $(m-R-1,1)$ becomes colored. From there, $(m-R-1,1)$ can force $(m-R-2,1)$, and so on until $(m,1)$ is colored. After this, the vertices $\left( \lfloor \frac{m}{2} \rfloor +1, 1 \right), \ldots ,\left(m, 1 \right)$ can iteratively force all of region $I$.

Once region $I$ is colored, the vertices $\left(\lfloor \frac{m}{2} \rfloor +1 , H \right), \left(\lfloor \frac{m}{2} \rfloor +2, H \right), \ldots, \left(m, H \right)$ can force $\left(\lfloor \frac{m}{2} \rfloor +1, H+1 \right), \left(\lfloor \frac{m}{2} \rfloor +1, H+1 \right), \ldots, \left(m, H+1 \right)$. Inductively, the vertices \[ \left(\left\lfloor\frac{m}{2} \right\rfloor +i, H+i \right), \left(\left\lfloor \frac{m}{2} \right\rfloor +1+i, H+i \right), \ldots, \left(m, H+i \right)\] can force \[\left(\left\lfloor \frac{m}{2} \right\rfloor +i+1, H+i+1 \right), \left(\left\lfloor \frac{m}{2} \right\rfloor +1+i+1, H+i+1 \right), \ldots, \left(m, H+i+1 \right).\] Note that there may be a leak on $\left( \lfloor \frac{m}{2} \rfloor, H \right)$; therefore, each of these steps guarantees once less force than the previous step (resulting in, at least, a triangle pattern). Therefore, this process applies potentially $\lceil \frac{m}{2} \rceil$ times.

We now show that $(m,n)$ (the bottom-rightmost vertex) becomes colored from these $\lceil \frac{m}{2} \rceil$ iterations; however, this depends on $m$ and $n$. When $i = \lceil \frac{m}{2} \rceil$, the bottom rightmost vertex that is colored is (or would be) $\left(m, H+i+1 \right) = \left(m, H+\lceil \frac{m}{2} \rceil+1 \right)$. Based on Tables \ref{even pattern values table} and \ref{odd pattern values table}, this is given by the following:
\begin{itemize}
\item For $m$ even and $m-n = 1$, $H+\left\lceil \frac{m}{2} \right\rceil+1 = m = n+1$;
\item For $m$ even and $m-n = 2$, $H+\left\lceil \frac{m}{2} \right\rceil+1 = m = n+2$;
\item For $m$ even and $m-n = 3$, $H+\left\lceil \frac{m}{2} \right\rceil+1 = m-1 = n+2$;
\item For $m$ even and $m-n = 4$, $H+\left\lceil \frac{m}{2} \right\rceil+1 = m-1 = n+3$;
\item For $m$ even and $m-n = 5$, $H+\left\lceil \frac{m}{2} \right\rceil+1 = m-2 = n+3$;
\item For $m$ odd and $m-n = 1$, $H+\left\lceil \frac{m}{2} \right\rceil+1 = m+1 = n+2$;
\item For $m$ odd and $m-n = 2$, $H+\left\lceil \frac{m}{2} \right\rceil+1 = m+1 = n+1$;
\item For $m$ odd and $m-n = 3$, $H+\left\lceil \frac{m}{2} \right\rceil+1 = m = n+3$;
\item For $m$ odd and $m-n = 4$, $H+\left\lceil \frac{m}{2} \right\rceil+1 = m-1 = n+3$;
\item For $m$ odd and $m-n = 5$, $H+\left\lceil \frac{m}{2} \right\rceil+1 = m-2 = n+3$
\end{itemize}
While the vertex $(m,n+k)$ is not a valid vertex for any $k > 0$, the list above demonstrates that, if the grid were extended, the pattern {\it could} continue to force until the vertex $(m,n+k)$, is colored and in particular, the vertex $(m,n)$ and vertices of the form $(m,j)$ (for $j=1, \ldots, n$) will be colored. Since the leak is in region I, this side is sufficient to force all vertices in region IV as well as all vertices of the form $\left(\left\lfloor \frac m 2 \right\rfloor, j\right)$ for $j = 1, \ldots, n$.

Since the leak is in region II, the vertices $\left(\left\lfloor \frac m 2 \right\rfloor, k\right)$ will force $\left(\left\lfloor \frac{m}{2} \right\rfloor -1, k\right)$ for $k = H+1, \ldots, n$, and inductively $\left(\left\lfloor \frac{m}{2} \right\rfloor - i, k\right)$ will force $\left(\left\lfloor \frac{m}{2} \right\rfloor - i -1, k\right)$ for $k = H+1+i, \ldots, n$ for potentially $i = 1, \ldots, n-H$. As before, since there may be a leak, each of these steps only guarantees once less force than the previous step, resulting the in the same triangle pattern. Similar to before, we determine the bottom-leftmost colored (or potentially colored vertex, if the grid were extended) vertex  step $i = n-H$. Indeed, at step $i = n-H$ and for $k=n$, $\left(\left\lfloor \frac{m}{2} \right\rfloor - i -1, n\right)= \left(\left\lfloor \frac m 2 \right\rfloor+H - n -1, n\right)$

\begin{itemize}
\item For $m$ even and $m-n = 1$, $\left\lfloor \frac{m}{2} \right\rfloor+H - n -1 = -1$;
\item For $m$ even and $m-n = 2$, $\left\lfloor \frac{m}{2} \right\rfloor+H - n -1 = 0$;
\item For $m$ even and $m-n = 3$, $\left\lfloor \frac{m}{2} \right\rfloor+H - n -1= 0$;
\item For $m$ even and $m-n = 4$, $\left\lfloor \frac{m}{2} \right\rfloor+H - n -1 = 1$;
\item For $m$ even and $m-n = 5$, $\left\lfloor \frac{m}{2} \right\rfloor+H - n -1 = 1$;
\item For $m$ odd and $m-n = 1$, $\left\lfloor \frac{m}{2} \right\rfloor+H - n -1 = -1$;
\item For $m$ odd and $m-n = 2$, $\left\lfloor \frac{m}{2} \right\rfloor+H - n -1 = 0$;
\item For $m$ odd and $m-n = 3$, $\left\lfloor \frac{m}{2} \right\rfloor+H - n -1 =0$;
\item For $m$ odd and $m-n = 4$, $\left\lfloor \frac{m}{2} \right\rfloor+H - n -1 = 0$;
\item For $m$ odd and $m-n = 5$, $\left\lfloor \frac{m}{2} \right\rfloor+H - n -1 = 0$
\end{itemize}
Therefore, $(1,n)$ is colored; hence we have colored two sides of the grid. By Lemma \ref{lem.sides}, this is sufficient to color the entire graph.

\emph{Case 2 - The leak is in region $I$}. By symmetry, the proof in Case 1 holds for even $m$.  For odd $m$, the proof is very similar and is omitted for brevity.

\emph{Case 3 - The leak is in region $III$ or $IV$}. In this case, regions $I$ and $II$ will be colored by the initial steps in Cases 1 and 2. If the leak is in region $III$, then we can color region $IV$ just as in Case 1, after which, we have colored two consecutive sides, and we are done by Lemma \ref{lem.sides}. If the leak is in region $IV$, then we can color region $III$ similarly, and we are done by Lemma \ref{lem.sides}.

\end{proof}

\begin{center}
\begin{figure}[h]
\centering
\begin{tikzpicture}[scale=0.7]

\draw[step=1cm,black,thin] (0,0) grid (1,6);

\draw[step=1cm,black,thin] (2,0) grid (7,6);

\draw[step=1cm,black,thin] (8,0) grid (9,6);

\draw[dashed,thin] (1,0)--(2,0);
\draw[dashed,thin] (1,1)--(2,1);
\draw[dashed,thin] (1,2)--(2,2);
\draw[dashed,thin] (1,3)--(2,3);
\draw[dashed,thin] (1,4)--(2,4);
\draw[dashed,thin] (1,5)--(2,5);
\draw[dashed,thin] (1,6)--(2,6);

\draw[dashed,thin] (7,0)--(8,0);
\draw[dashed,thin] (7,1)--(8,1);
\draw[dashed,thin] (7,2)--(8,2);
\draw[dashed,thin] (7,3)--(8,3);
\draw[dashed,thin] (7,4)--(8,4);
\draw[dashed,thin] (7,5)--(8,5);
\draw[dashed,thin] (7,6)--(8,6);

\draw[dashed,thin,gray] (4.5,8.5)--(4.5,6.2);
\draw[dashed,thin,gray] (5,8.5)--(5,6.2);

\draw [decorate,decoration={brace,amplitude=10pt},xshift=0pt,yshift=0pt] (0,8.5) -- (9,8.5) node [black,midway,xshift=0pt,yshift=15pt] {\footnotesize $m$};

\draw (2.6,8.2) ++(0:0cm) node [fill=white,inner sep=3pt](a){$\frac{m}{2}$, for even $m$};

\draw (7,7.9) ++(0:0cm) node [fill=white,inner sep=3pt](a){$\lceil \sfrac{m}{2} \rceil$ for odd $m$};

\draw [decorate,decoration={brace,amplitude=10pt},xshift=0pt,yshift=0pt] (0.6,6.5) -- (2.4,6.5) node [black,midway,xshift=0pt,yshift=15pt] {\footnotesize $L$};

\draw [decorate,decoration={brace,amplitude=10pt},xshift=0pt,yshift=0pt] (6.6,6.5) -- (8.4,6.5) node [black,midway,xshift=0pt,yshift=15pt] {\footnotesize $R$};

\draw[fill=white] (3,6) circle (4pt);
\draw[fill=white] (3,5) circle (4pt);
\draw[fill=white] (3,4) circle (4pt);
\draw[fill=white] (3,3) circle (4pt);
\draw[fill=white] (2,4) circle (4pt);
\draw[fill=white] (2,6) circle (4pt);
\draw[fill] (4,5) circle (4pt);

\draw[fill=white] (0,1) circle (4pt);
\draw[fill=white] (0,2) circle (4pt);
\draw[fill=white] (0,3) circle (4pt);
\draw[fill=white] (0,4) circle (4pt);
\draw[fill=white] (0,5) circle (4pt);
\draw[fill=white] (0,6) circle (4pt);

\draw[fill=white] (1,1) circle (4pt);
\draw[fill=white] (1,2) circle (4pt);
\draw[fill=white] (1,3) circle (4pt);
\draw[fill=white] (1,4) circle (4pt);
\draw[fill] (1,5) circle (4pt);
\draw[fill=white] (1,6) circle (4pt);

\draw[fill=white] (5,1) circle (4pt);
\draw[fill=white] (5,2) circle (4pt);
\draw[fill] (5,3) circle (4pt);
\draw[fill] (5,4) circle (4pt);
\draw[fill] (5,5) circle (4pt);
\draw[fill] (5,6) circle (4pt);

\draw[fill=white] (6,1) circle (4pt);
\draw[fill=white] (6,2) circle (4pt);
\draw[fill=white] (6,3) circle (4pt);
\draw[fill=white] (6,4) circle (4pt);
\draw[fill=white] (6,5) circle (4pt);
\draw[fill=white] (6,6) circle (4pt);

\draw[fill=white] (7,1) circle (4pt);
\draw[fill=white] (7,2) circle (4pt);
\draw[fill=white] (7,3) circle (4pt);
\draw[fill=white] (7,4) circle (4pt);
\draw[fill] (7,5) circle (4pt);
\draw[fill=white] (7,6) circle (4pt);

\draw[fill=white] (8,1) circle (4pt);
\draw[fill=white] (8,2) circle (4pt);
\draw[fill=white] (8,3) circle (4pt);
\draw[fill=white] (8,4) circle (4pt);
\draw[fill] (8,5) circle (4pt);
\draw[fill=white] (8,6) circle (4pt);

\draw[fill=white] (9,1) circle (4pt);
\draw[fill=white] (9,2) circle (4pt);
\draw[fill=white] (9,3) circle (4pt);
\draw[fill=white] (9,4) circle (4pt);
\draw[fill=white] (9,5) circle (4pt);
\draw[fill=white] (9,6) circle (4pt);

\draw[fill=white] (0,0) circle (4pt);
\draw[fill=white] (1,0) circle (4pt);
\draw[fill=white] (2,0) circle (4pt);
\draw[fill=white] (3,0) circle (4pt);
\draw[fill=white] (4,0) circle (4pt);
\draw[fill=white] (5,0) circle (4pt);
\draw[fill=white] (6,0) circle (4pt);
\draw[fill=white] (7,0) circle (4pt);
\draw[fill=white] (8,0) circle (4pt);
\draw[fill=white] (9,0) circle (4pt);

\draw[fill] (2,5) circle (4pt);
\draw[fill=white] (2,3) circle (4pt);
\draw[fill=white] (2,2) circle (4pt);
\draw[fill=white] (2,1) circle (4pt);

\draw[fill=white] (3,2) circle (4pt);
\draw[fill=white] (3,1) circle (4pt);

\draw[fill] (4,6) circle (4pt);
\draw[fill] (4,4) circle (4pt);
\draw[fill] (4,3) circle (4pt);
\draw[fill=white] (4,2) circle (4pt);
\draw[fill=white] (4,1) circle (4pt);

\draw [decorate,decoration={brace,amplitude=10pt},xshift=0pt,yshift=0pt] (-1.6,0) -- (-1.6, 6) node [black,midway,xshift=-17pt,yshift=0pt] {\footnotesize $n$};

\draw [decorate,decoration={brace,amplitude=10pt},xshift=0pt,yshift=0pt] (-0.6,3) -- (-0.6, 6) node [black,midway,xshift=-17pt,yshift=0pt] {\footnotesize $H$};

\end{tikzpicture}
\caption{An example of the pattern for Theorem \ref{thm.mn5}, with $L$, $R$, and $H$ values found in Tables \ref{even pattern values table} and \ref{odd pattern values table}.}
\label{new array figure}
\end{figure}
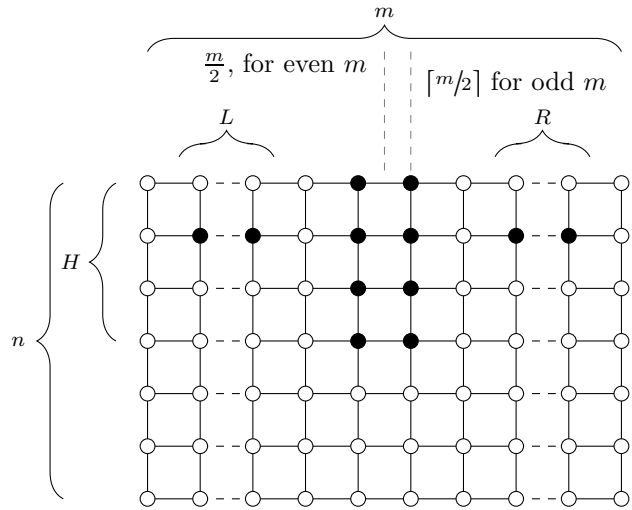
\end{center}

\def\arraystretch{1.5}
\begin{center}
\begin{table} [h]
    \begin{tabular}{| c | c | c | c |}
    \hline
    $m-n$ & $L$ & $R$ & $H$
    \\ \hline \hline
    1 or 2 & 1 & 1 & $\frac{m-2}{2}$
    \\ \hline
    3 or 4 & 2 & 2 & $\frac{m-4}{2}$
    \\ \hline
    5 & 3 & 3 & $\frac{m-6}{2}$
    \\ \hline
    \end{tabular}
\caption{Values for $L$, $R$, and $H$ given an even $m$ value.}
\label{even pattern values table}
\end{table}

\end{center}

\def\arraystretch{1.5}
\begin{center}
\begin{table} [h]
    \begin{tabular}{| c | c | c | c |}
    \hline
    $m-n$ & $L$ & $R$ & $H$
    \\ \hline \hline
    1 or 2 & 0 & 1 & $\frac{m-1}{2}$
    \\ \hline
    3 & 1 & 2 & $\frac{m-3}{2}$
    \\ \hline
    4 & 2 & 3 & $\frac{m-5}{2}$
    \\ \hline
    5 & 3 & 4 & $\frac{m-7}{2}$
    \\ \hline
    \end{tabular}
\caption{Values for $L$, $R$, and $H$ given an odd $m$ value.}
\label{odd pattern values table} 
\end{table}
\end{center}

\def\arraystretch{1.5}
\begin{center}
\begin{table} [h]
    \begin{tabular}{ c  c || c | c | c | c | c | c|}
    \cline{3-8}
     & & \multicolumn{6}{|c|}{$n$} \\
    \cline{3-8}
      &  & $2$ & $3$ & $4$ & $5$ & $6$ & $7$ \\ \hline \hline

    \multicolumn{1}{|c|}{} & $2$ & $2$ & $3$ & $4$ & $4$ & 4 & $\hdots$ \\ \cline{2-8}

    \multicolumn{1}{|c|}{ \multirow{5}{1em}{$m$} } & $3$ & $3$ & $3$ & $4$ & $5$ & $6$ & $\hdots$ \\
    \cline{2-8}
    \multicolumn{1}{|c|}{} & $4$ & $4$ & $4$ & $4$ & $5$& $6$ & $7$ \\
    \cline{2-8}
    \multicolumn{1}{|c|}{} & $5$ & $4$ & $5$ & $5$ & $5$ & $6$ & 7\\
    \cline{2-8}
    \multicolumn{1}{|c|}{} & $6$ & $4$ & $6$ & $6$ & $6$ & $6$ & 7\\
    \cline{2-8}
    \multicolumn{1}{|c|}{} & $7$ & $4$ & $6$ & $7$ & $7$ & $7$ & 7 \\
    \cline{2-8}
    \multicolumn{1}{|c|}{} & $8$ & $\vdots$ & $\vdots$ & $8$ & $\leq 8$ & $\leq 8$ & $\le 8$ \\
   \hline
    \end{tabular}
\caption{Summary of values and bounds of $Z_{(1)}(P_n \Box P_m)$.} \label{tab.grids}
\end{table}
\end{center}

For grid graphs, we remark that Table \ref{tab.grids} indicates for select values of $n \le m$ that $Z_{(1)}(P_n \Box P_m) = \min (2n, m)$. While we have proven the upper bound  $Z_{(1)}(P_n \Box P_m) \le \min (2n, m)$  for infinite values of $n$ and $m$, a justification as to why this is optimal is elusive.

\section{Integer Program Algorithm For Leaky Forcing} \label{sec.alg}

We implement a constraint generation method to calculate the $\ell$-forcing number similar to \cite{compzf}, using Proposition \ref{prop.fort}. 




In the course of solving the problem of finding an $\ell$-forcing set, this integer program is run first with a default fort containing every vertex in the graph if no other forts are initialized. Once a solution vector is found, it is tested as an $\ell$-forcing set. If it is successful, then we have completed our task. If not, the set of vertices that the solution set was unable to color becomes a fort, which is then added to the set of forts $F$ used in this integer program. This process continues until the solution vector is a successful $\ell$-forcing set.

Suppose in an application of $\ell$-forcing, there is a risk of a vertex in the $\ell$-forcing set becoming uncolored. If this risk exists, the designer of the system may wish to create some redundancy in the system to be more certain that the entire graph may still become colored. If redundancy is advantageous, then we would only need to add the requirement that two vertices from each fort are initially colored. Then, if any one colored vertex is ``turned off" for any reason, every fort is still guaranteed to have at least one colored vertex initially, so the colored vertices still form an $\ell$-forcing set. The integer program's constraint can be easily updated to solve this problem with redundant vertices in the $\ell$-forcing set.

The sets used in this integer program are a set $V$ of vertices of the graph and a set $F$ containing the forts of the graph. The information for each fort is stored in the binary parameter $f_{k,v}$ in which a value of 1 indicates that vertex $v$ is in fort $k$, and a value of 0 indicates that vertex $v$ is not in fort $k$.

The decision variable of the integer program is a $\lvert V \rvert \times 1$  binary vector. Each element $x_v$ corresponds to a vertex $v$ in the graph. An $x_v$ value of 1 indicates that $v$ must be in the proposed $\ell$-forcing set and a value of 0 indicates that $v$ is not in the proposed $\ell$-forcing set.

The objective function ensures that the proposed $\ell$-forcing set contains as few vertices as possible. A set containing all but one vertex of a graph would be a successful $\ell$-forcing set, but we are interested in the smallest possible size for such a set.

Constraint (\ref{zeroforcingconstraint}) is a set covering constraint which requires each fort to have at least one vertex in the solution vector. As stated in \cite{compzf}, this is a sufficient requirement to ensure that the optimum result is equal to the $\ell$-forcing number for the given graph, once the solution vector is in fact a successful $\ell$-forcing set. \\

Sets

\begin{tabular}{ll}
$V$ & set of vertices \\
$F$ & set of forts \\
\end{tabular}
\\

Parameters

\begin{tabular}{lll}
$f_{k, v}$ & $k\in F, v \in V$ & membership of vertex $v$ in fort $k$ \\
\end{tabular}
\\

Variables

\begin{tabular}{lll}
$x_{v}$ & $v \in V$ & use of vertex $v$ in the $\ell$-forcing set
\end{tabular}
\setcounter{equation}{0}
\\

Objective

\begin{align}
\text{minimize} \quad
& \sum_{v \in V} x_{v}
\end{align}

Constraints

\begin{align} \label{zeroforcingconstraint}
\sum\limits_{v \in V }^{} f_{k,v} x_v \geq 1 && \forall k \in F \\
x_v \in \{ 0,1 \} && \forall v \in V
\end{align}

In Algorithm \ref{zeroforcing}, we provide the process used in creating an $\ell$-forcing set for a graph $G$ with $\ell$ leaks. $\ell$-forcing on a graph with no leaks still follows this algorithm with a value of $\ell = 0$. The inputs are the graph $G$ (and set $V$, the vertex set of $G$), a set $R \subset V$ containing vertices which must be included in the $\ell$-forcing set (if any such vertices exist), and the number of leaks $\ell$. The set $R$ is used if there are vertices that we want to be in the $\ell$-forcing set, either for application-specific reasons or in an attempt to get an output more quickly. The generation of minimal forts in lines 2 and 4 ensures that each of those vertices will be in the output, since each fort must be covered and each of those forts will each only contain one vertex. The output of this algorithm is an optimal $\ell$-forcing set $Z \subset V$ for $G$ with $\ell$ leaks.

\begin{algorithm}
\caption{Finding an $\ell$-Forcing Set for a Graph $G$ with $\ell$ leaks \newline \textbf{Input:} A graph $G$ with corresponding vertex set $V$, number of leaks $\ell$, set of vertices $R$ required to be in the $\ell$-forcing set \newline \textbf{Output:} Optimal $\ell$-forcing set, $Z$} \label{zeroforcing}

\begin{algorithmic}[1]

\For{A vertex $r$,$\forall r \in V(G)$ with degree less than or equal to $\ell$}
\State{Generate a minimal fort containing the vertex $r$, add the fort to $F$}
\EndFor

\For{A vertex $r$, $\forall r \in R$ }
\State{Generate a minimal fort containing the vertex $r$, add the fort to $F$}
\EndFor

\State{Run IP in Section \ref{sec.alg} with the set of forts $F$ to generate $Z$}

\While{$Z$ is not an $\ell$-forcing set on $G$}
\State{Generate a minimal fort containing the vertices left uncolored by $Z$, add the fort to $F$}
\State{Run IP in Section \ref{sec.alg} with the updated set of forts $F$ to generate $Z$}
\EndWhile

\end{algorithmic}
\end{algorithm}


We first tested the runtimes of our algorithm and integer program on $P_m \Box P_m$ graphs with one leak for values of $m = 3, 4, 5,$ and $6$. Our results are detailed in Table \ref{boxruntimes}. Each of these four graphs has the same structure, so the dramatic increase in runtime shows that our integer program is sensitive to the number of vertices in the graph.

\def\arraystretch{1.5}
\begin{center}
\begin{table} [h]
    \begin{tabular}{| c | c | c | c |}
    \hline
    $m$ & $|V|$ & $Z_{(1)}(G)$  & Time (seconds)
    \\ \hline
    3 & 9 & 3 & 0.7851
    \\ \hline
    4 & 16 & 4 & 7.9365
    \\ \hline
    5 & 25 & 5 & 101.3456
    \\ \hline
    6 & 36 &  6 & 1134.9772
    \\ \hline
    \end{tabular}
\caption{\label{tab: array run times} Run times finding $Z_{(1)}(P_m \Box P_m)$.}
\label{boxruntimes}
\end{table}
\end{center}

We also tested the runtimes of our algorithm and integer program on nine cubic graphs from the Wolfram database \cite{wolfram} with one leak, the results of which are found in Table \ref{cubicruntimes}. Cubic graphs are those in which all vertices have a degree of 3. Since these nine graphs have very similar structures, we can more easily compare their runtimes. Just as with the $P_m \Box P_m$ graphs, we can see that the number of vertices does affect the runtime. However, within a fixed number of vertices, smaller zero forcing numbers have smaller runtimes. Since our method begins with the smallest possible $\ell$-forcing set and iteratively adds more vertices to this set as it runs, this increase in time makes sense.

A comparable method was timed on cubic graphs with no leaks in \cite{compzf}. Their results for cubic graphs on 20 vertices yielded an average runtime of 0.40 seconds, and for cubic graphs on 30 vertices yielded an average runtime of 2.05 seconds. Although our runtimes are not nearly as fast, our algorithm took into account the potential for a leak to be on any vertex in the graph, increasing the complexity of our process.

\def\arraystretch{1.5}
\begin{center}
\begin{table} [h]
    \begin{tabular}{| c | c | c | c |}
    \hline
    Graph name & $|V|$ & $Z_{(1)}(G)$  & Time (seconds)
    \\ \hline
    Cubic\_20\_1 & 20 & 6 & 101.2786
    \\ \hline
    Cubic\_20\_2 & 20 & 6 & 75.9405
    \\ \hline
    Cubic\_20\_3 & 20 & 7 & 223.4605
    \\ \hline
    Cubic\_22\_1 & 22 & 7 & 392.8827
    \\ \hline
    Cubic\_22\_2 & 22 & 7 & 301.0180
    \\ \hline
    Cubic\_24\_1 & 24 & 8 & 1058.7689
    \\ \hline
    Cubic\_24\_2 & 24 & 6 & 481.3399
    \\ \hline
    Cubic\_24\_3 & 24 & 8 & 927.1612
    \\ \hline
    Cubic\_24\_4 & 24 & 8 & 968.2742
    \\ \hline
    \end{tabular}

\caption{\label{tab:cube run times} Run times finding $Z_{(1)}(G)$ for various cubic graphs.}
\label{cubicruntimes}
\end{table}
\end{center}

\section{Conclusion}

We introduced zero forcing and $\ell$-forcing and provided the $\ell$-forcing numbers of various families of graphs, such as paths, cycles, complete graphs, and wheels. Additionally, we explored $\ell$-forcing on hypercubes and provided a $1$-forcing set for any $P_n \Box P_m$ graph. We used the idea of forts in graphs as the foundation for our algorithm and integer program which find the $\ell$-forcing number for any finite, connected graph.

For future work, one goal would be to gain clarity on the patterns of interactions among the dimension of a cube graph, the number of leaks on the graph, and the $\ell$-forcing number of the graph. Another goal would be to find, if one exists, a minimum size pattern of vertices which color any $P_n \Box P_m$ graph. It appears that for $m \ge n$, $Z_{(1)}(P_n \Box P_m) = \min(m,2n)$. However, a universal pattern that applies for any $m$ and $n$ is elusive. Based on our work, if there is an example for which $Z_{(1)}(P_n \Box P_m) > \min(m,2n)$, $n$ must be at least $10$ which is beyond our current computational capabilities.

The second author is supported, in part, by NSF Grant DMS-1719894 and ONR grant N00014-18-W-X00709.

\bibliographystyle{siam}
\bibliography{copsandrobbers}


\end{document}